\newtheorem{theorem}{Theorem}[section]
\newtheorem{corollary}[theorem]{Corollary}
\newtheorem{proposition}[theorem]{Proposition}
\newtheorem{lemma}[theorem]{Lemma}
\newtheorem{question*}{Question}
\newtheorem{problem*}{Problem}
\theoremstyle{definition}
\theoremstyle{remark}
\newtheorem{remark}[theorem]{Remark}
\numberwithin{equation}{section}
\crefname{figure}{Figure}{Figures}
\theoremstyle{plain}
\newtheorem*{theorem*}{Theorem}
\crefname{theorems}{Theorem}{Theorems}
\crefname{corollaries}{Corollary}{Corollaries}
\newtheorem*{corollary*}{Corollary}
\crefname{corollaries*}{Corollary}{Corollaries}
\crefname{lemma}{Lemma}{Lemmata}
\crefname{proposition}{Proposition}{Propositions}
\crefname{conjectures}{Conjecture}{Conjectures}
\newtheorem*{conjonjecture*}{Conjecture}
\crefname{conjonjectures*}{Conjecture}{Conjectures}
\crefname{definitions}{Definition}{Definitions}
\crefname{hypotheses}{Hypothesis}{Hypotheses}
\renewcommand{\hat}{\widehat}
\newcommand{\Z}{\mathbb{Z}}
\newcommand{\R}{\mathbb{R}}
\newcommand{\Q}{\mathbb{Q}}
\newcommand{\kn}{\mathfrak{n}}
\newcommand{\kp}{\mathfrak{p}}
\newcommand{\re}{\textup{Re}}
\newcommand{\im}{\textup{Im}}
\newcommand{\Ad}{\mathrm{Ad}}
\newcommand{\GL}{\mathrm{GL}}
\newcommand{\SL}{\mathrm{SL}}
\newcommand{\N}{\mathrm{N}}
\newcommand{\A}{\mathbb{A}}
\DeclareFontFamily{U}  {MnSymbolF}{}
\DeclareSymbolFont{symbolsMN}{U}{MnSymbolF}{m}{n}
\DeclareFontShape{U}{MnSymbolF}{m}{n}{
    <-6>  MnSymbolF5
   <6-7>  MnSymbolF6
   <7-8>  MnSymbolF7
   <8-9>  MnSymbolF8
   <9-10> MnSymbolF9
  <10-12> MnSymbolF10
  <12->   MnSymbolF12}{}
\DeclareFontShape{U}{MnSymbolF}{b}{n}{
    <-6>  MnSymbolF-Bold5
   <6-7>  MnSymbolF-Bold6
   <7-8>  MnSymbolF-Bold7
   <8-9>  MnSymbolF-Bold8
   <9-10> MnSymbolF-Bold9
  <10-12> MnSymbolF-Bold10
  <12->   MnSymbolF-Bold12}{}
\DeclareMathSymbol{\tbigtimes}{\mathop}{symbolsMN}{2}
\newcommand*{\bigtimes}{%
  \DOTSB
  \tbigtimes
  \slimits@ 
}
\renewcommand{\tilde}{\widetilde}
\renewcommand{\bar}{\overline}
\renewcommand{\epsilon}{\varepsilon}
\renewcommand{\pmod}[1]{\, (\mathrm{mod} {\, #1})}
\newcommand{\cO}{\mathcal{O}}
\newcommand{\kq}{\mathfrak{q}}
\renewcommand{\Re}{\mathrm{Re}}
\DeclareMathAlphabet{\mathpzc}{OT1}{pzc}{m}{it}
\renewcommand{\pmod}[1]{\,(\mathrm{mod}\,\,#1)}
\let\@wraptoccontribs\wraptoccontribs
\title[Zeros of Rankin--Selberg $L$-functions in families]{Zeros of Rankin--Selberg $L$-functions in families}
\author{Peter Humphries}
\address{Department of Mathematics, University of Virginia, Charlottesville, VA 22904, USA}
\email{\href{mailto:pclhumphries@gmail.com}{pclhumphries@gmail.com}}
\urladdr{\href{https://sites.google.com/view/peterhumphries/}{https://sites.google.com/view/peterhumphries/}}
\author{Jesse Thorner}
\address{Department of Mathematics, University of Illinois, Urbana, IL 61801, USA}
\email{\href{mailto:jesse.thorner@gmail.com}{jesse.thorner@gmail.com}}
\keywords{Automorphic forms, Rankin--Selberg $L$-functions, zero density estimate, subconvexity}
\subjclass[2020]{11F66 (primary); 11F12, 11F67 (secondary)}
\begin{document}

\begin{abstract}
Let $\mathfrak{F}_n$ be the set of all cuspidal automorphic representations $\pi$ of $\mathrm{GL}_n$ with unitary central character over a number field $F$.  We prove the first unconditional zero density estimate for the set $\mathcal{S}=\{L(s,\pi\times\pi')\colon\pi\in\mathfrak{F}_n\}$ of Rankin--Selberg $L$-functions, where $\pi'\in\mathfrak{F}_{n'}$ is fixed.  We use this density estimate to establish
\begin{enumerate}[(i)]
	\item a hybrid-aspect subconvexity bound at $s=\frac{1}{2}$ for almost all $L(s,\pi\times\pi')\in \mathcal{S}$,
	\item a strong on-average form of effective multiplicity one for almost all $\pi\in\mathfrak{F}_n$, and
	\item a positive level of distribution for $L(s,\pi\times\tilde{\pi})$, in the sense of Bombieri--Vinogradov, for each $\pi\in\mathfrak{F}_n$.
\end{enumerate}
\end{abstract}

\maketitle

\section{Introduction and statement of the main result}
\label{sec:intro}

Let $\A_F$ be the ring of ad\`{e}les over a number field $F$ with absolute norm $\N=\N_{F/\Q}$ and absolute discriminant $D_F$.  Let $\mathfrak{F}_{n}$ be the set of cuspidal automorphic representations $\pi=\bigotimes_{v} \pi_{v}$ of $\GL_{n}(\A_F)$, where the (restricted) tensor product runs over all places of $F$ and $\pi$ is normalized so that its central character is trivial on the diagonally embedded copy of the positive reals.  Let $\kq_{\pi}$ be the arithmetic conductor of $\pi$, $C(\pi)\geq 1$ the analytic conductor of $\pi$ (see \eqref{eqn:analytic_conductor_def}), and $\mathfrak{F}_n(Q)=\{\pi\in\mathfrak{F}_n\colon C(\pi)\leq Q\}$.  The analytic conductor $C(\pi)$ is a useful measure for the arithmetic and spectral complexity of $\pi$. Our normalization for the central characters ensures that $|\mathfrak{F}_n(Q)|$ is finite.

Given $\pi\in\mathfrak{F}_n$ and $\pi'\in\mathfrak{F}_{n'}$, let $L(s,\pi\times\pi')$ be the associated Rankin--Selberg $L$-function, and let $\tilde{\pi}\in\mathfrak{F}_n$ and $\tilde{\pi}'\in\mathfrak{F}_{n'}$ be the contragredient representations.  When $\pi'\in\{\tilde{\pi},\tilde{\pi}'\}$, work of Brumley \cite[Appendix]{Humphries} and the authors \cite{HT} shows that there exists an effectively computable constant $\Cl[abcon]{ZFR_standard_intro}=\Cr{ZFR_standard_intro}(n,n')>0$ such that $L(s,\pi\times\pi')$ has a ``standard'' zero-free region of the shape
\begin{equation}
\label{eqn:std_ZFR}
\re(s)\geq 1-\frac{\Cr{ZFR_standard_intro}}{\log(C(\pi)C(\pi')(|\im(s)|+3)^{[F:\Q]})}
\end{equation}
apart from at most one real simple zero.  This is comparable to the classical zero-free region for Dirichlet $L$-functions.  Brumley (\cite{Brumley} and \cite[Appendix]{Lapid}) established a much narrower zero-free region for all choices of $\pi$ and $\pi'$.  The generalized Riemann hypothesis (GRH) asserts that $L(s,\pi\times\pi')\neq 0$ for $\re(s)>\frac{1}{2}$.  Zeros near the line $\re(s)=1$ are typically most damaging in applications, but even a zero-free region of the shape $\re(s)\geq1-\delta$ for some constant $\delta=\delta(n,n',[F:\Q])>0$ would be sufficient to obtain many spectacular arithmetic consequences.

Since such strong zero-free regions for Rankin--Selberg $L$-functions remain out of reach, it is useful to show that zeros near the line $\re(s)=1$ must be ``sparse''.  A suitable quantitative formulation can serve as a proxy for a zero-free region of the shape $\re(s)\geq1-\delta$.  Famous consequences of this philosophy include Hoheisel's proof \cite{Hoheisel} that $p_{n+1}-p_n\ll p_n^{1-1/33000}$ (where $p_n$ is the $n$-th prime) and Linnik's proof \cite{Linnik} that if $\gcd(a,q)=1$, then there exists an absolute, effectively computable constant $B>0$ and a prime $p\leq q^B$ such that $p\equiv a\pmod{q}$.

To quantify our notion of ``sparse'', we define for $\sigma\geq 0$ and $T\geq 1$ the quantity
\[
N_{\pi\times\pi'}(\sigma,T)=|\{\rho=\beta+i\gamma\colon L(\rho,\pi\times\pi')=0,~\beta\geq\sigma,~|\gamma|\leq T\}|.
\]
Note that $N_{\pi\times\pi'}(\frac{1}{2},T)$ is roughly $T\log(C(\pi)C(\pi')T)$ via the argument principle and the functional equation, and GRH can be restated as $N_{\pi\times\pi'}(\sigma,T)=0$ for all $\sigma>\frac{1}{2}$.  The zero density estimate
\begin{equation}
\label{eqn:SoundThorner}
N_{\pi\times\pi'}(\sigma,T)\ll_{n,n',[F:\Q]}(C(\pi)C(\pi')T^{[F:\Q]})^{10^7 (n'n)^4(1-\sigma)}.
\end{equation}
follows from work of Soundararajan and Thorner \cite[Corollary 2.6]{ST}.  Therefore, while an arbitrary Rankin--Selberg $L$-function $L(s,\pi\times\pi')$ is not yet known to have the standard zero-free region \eqref{eqn:std_ZFR}, the bound \eqref{eqn:SoundThorner} ensures that the number of zeros in the region \eqref{eqn:std_ZFR} is $O_{n,n',[F:\Q]}(1)$.

Let $\mathcal{S}\subseteq\mathfrak{F}_n$, and let $\mathcal{S}(Q)=\{\pi\in\mathcal{S}\colon C(\pi)\leq Q\}$.  In this article, we seek a strong averaged form of \eqref{eqn:SoundThorner}, namely
\begin{equation}
\label{eqn:goal}
	\sum_{\pi\in\mathcal{S}(Q)}N_{\pi\times\pi'}(\sigma,T)\ll_{n,n',[F:\Q],\epsilon}(Q|\mathcal{S}(Q)|C(\pi')T^{[F:\Q]})^{A(1-\sigma)+\epsilon},
\end{equation}
where $A=A(n,n',[F:\Q])>0$ is a constant and $\epsilon>0$.  The bound \eqref{eqn:goal} follows from the works of Brumley, Thorner, and Zaman under at least one of the following hypotheses:\footnote{In \cite{BTZ,ST}, it is assumed that $F=\Q$.  Uniformity over $F\neq\Q$ requires minor modifications.}
\begin{itemize}
	\item $\pi'\in\mathfrak{F}_1$ is trivial \cite[Theorem 1.2]{TZ_GLn},
	\item $\max\{n,n'\}\leq 4$ \cite[Theorem 1.3]{BTZ}, or
	\item $\pi'$ and each $\pi\in\mathcal{S}(Q)$ satisfy certain unproven partial progress towards the generalized Ramanujan conjecture (GRC) \cite[Hypothesis 1.1 and Theorem 1.3]{BTZ}.\footnote{If $\theta_n$ in \eqref{eqn:LRS_finite} satisfies $\theta_{n}\leq\frac{1}{4}-\delta_n$ for some $\delta_n>0$, then each $\pi\in\mathfrak{F}_n$ satisfies \cite[Hypothesis 1.1]{BTZ}.}
\end{itemize}
Here, we prove the first completely unconditional zero density estimate of the form \eqref{eqn:goal}.

\begin{theorem}
	\label{thm:ZDE}
	Let $n,n'\geq 1$ and $\epsilon>0$.  Let $\mathcal{S}\subseteq\mathfrak{F}_n$ and $\mathcal{S}(Q)=\{\pi\in\mathcal{S}\colon C(\pi)\leq Q\}$.  If $0\leq\sigma\leq 1$, $\pi'\in\mathfrak{F}_{n'}$, and $Q,T\geq 1$, then
	\[
	\sum_{\pi\in\mathcal{S}(Q)}N_{\pi\times\pi'}(\sigma,T)\ll_{n,n',[F:\Q],\epsilon} \big(|\mathcal{S}(Q)|^{4} \big(C(\pi') Q  T^{[F:\Q]}\big)^{6.15\max\{n^2,n'n\}}\big)^{1-\sigma+\epsilon}.
	\]
\end{theorem}
\cref{thm:ZDE} is nontrivial when
\begin{equation}
\label{eqn:subfamily2}
\delta_{\mathcal{S}}=\liminf_{Q\to\infty}\frac{\log|\mathcal{S}(Q)|}{\log Q}>0.
\end{equation}
This is important because in applications, it is usually convenient to bound $Q$ by a power of $|\mathcal{S}(Q)|$ or vice versa.  When $\mathcal{S}=\mathfrak{F}_n$, we have the bounds
\begin{equation}
\label{eqn:poly_upper}
Q^{n+1}\ll_{n,F}|\mathfrak{F}_n(Q)|\ll_{\epsilon}D_F^{-n^2}Q^{2n+\epsilon}.
\end{equation}
The upper bound in \eqref{eqn:poly_upper} was proved by Brumley, Thorner, and Zaman \cite[Theorem A.1]{BTZ}.  The lower bound in \eqref{eqn:poly_upper} follows from work of Brumley and Mili{\'c}evi{\'c} \cite[Theorem 1.1]{BM}, who computed a constant $c_{n,F}>0$ such that if $\mathfrak{F}_n^*(Q)$ is the subset of $\pi\in\mathfrak{F}_n(Q)$ that are spherical at the archimedean places of $F$, then $|\mathfrak{F}_n^*(Q)|\sim c_{n,F}Q^{n+1}$.  (The lower bound in \eqref{eqn:poly_upper} reflects the conjectured order of growth; see \cite{BM}.)  Together, \cref{thm:ZDE} and \eqref{eqn:poly_upper} imply that
\begin{equation}
\label{eqn:ZDE_F}
\sum_{\pi\in\mathfrak{F}_n(Q)}N_{\pi\times\pi'}(\sigma,T)\ll_{n,n',F,\epsilon}(|\mathfrak{F}_n(Q)|C(\pi')^{n}T^{n[F:\Q]})^{7.1\max\{n,n'\}(1-\sigma)+\epsilon}.
\end{equation}
Furthermore, if $n'=n\geq 3$ and $\pi'\in\mathfrak{F}_{n'}$, then \cref{thm:ZDE} and \eqref{eqn:poly_upper} together imply that
\begin{equation}
\label{eqn:ZDE_Q}
\sum_{\pi\in\mathfrak{F}_n(Q)}N_{\pi\times\pi'}(\sigma,T)\ll_{\epsilon}(C(\pi') Q T^{[F:\Q]})^{9n^2(1-\sigma)+\epsilon}.
\end{equation}

\section{Applications}
\label{sec:Applications}

We will now describe some applications of \cref{thm:ZDE}.  In what follows, we write $f\ll_{\nu}g$, $f=O_{\nu}(g)$, and $g\gg_{\nu}f$ to denote that there exists a constant $c>0$ such that $|f|\leq c|g|$ in the stated range.  The implied constant $c$, which is effectively computable unless otherwise stated, will depend at most on $\nu$, $n$, $n'$, and $[F:\Q]$.  The expression $f\asymp_{\nu} g$ means that $f\ll_{\nu}g$ and $g\ll_{\nu}f$.  We use $\epsilon>0$ to denote an arbitrarily small quantity that depends at most on $n$, $n'$, and $[F:\Q]$.

\subsection{Bounds for Rankin--Selberg \texorpdfstring{$L$}{L}-functions}

It is a classical problem for Dirichlet $L$-functions to find strong bounds on the critical line $\re(s)=\frac{1}{2}$. The Phragm{\'e}n--Lindel{\"o}f convexity principle shows that if $q_{\chi}$ is the conductor of a primitive Dirichlet character $\chi$, then $L(\tfrac{1}{2},\chi) \ll q^{1/4}$; improving this bound by replacing $1/4$ with a smaller exponent is known as a \emph{subconvex} bound. The multiplicative version of the classical large sieve inequality combined with an approximate functional equation shows that for almost all $\chi$, we have the bound $L(\frac{1}{2},\chi)\ll_{\epsilon}q_{\chi}^{\epsilon}$ for all $\epsilon>0$, consistent with the generalized Lindel{\"o}f hypothesis (GLH).

For $\pi\in\mathfrak{F}_2$, GLH predicts that $L(\tfrac{1}{2},\pi)\ll_{\epsilon}C(\pi)^{\epsilon}$.  Michel and Venkatesh \cite{MV} proved that there exists a fixed positive $\delta>0$ such that $L(\frac{1}{2},\pi)\ll_F C(\pi)^{1/4-\delta}$, the culmination of several decades of research.  When $F=\Q$, a sharp mean value estimate for Hecke eigenvalues proved by Deshouillers and Iwaniec \cite{DI}, in conjunction with the approximate functional equation, implies the bound $L(\frac{1}{2},\pi)\ll_{\epsilon} (qT)^{\epsilon}$ for almost all $\pi\in\mathfrak{F}_2$ of arithmetic conductor $q$, trivial central character, and archimedean complexity (Laplace eigenvalue or weight squared) lying in the dyadic interval $[T,2T]$.  Note that in this case, $C(\pi)\asymp qT$.

For $\pi\in\mathfrak{F}_n$ with $n\geq 3$, the best uniform result towards the bound $L(\frac{1}{2},\pi)\ll_{\epsilon}C(\pi)^{\epsilon}$ predicted by GLH is that of Soundararajan and Thorner \cite[Corollary 2.7]{ST}, namely
\begin{equation}
\label{eqn:ST_weak}
L(\tfrac{1}{2},\pi)\ll C(\pi)^{\frac{1}{4}}(\log C(\pi))^{-1/(10^{17}n^3)}.
\end{equation}
We mention three results that improve upon \eqref{eqn:ST_weak} in an average sense, each having complementary strengths.  Jana \cite[Theorem 6]{jana2020applications} extended the GLH-on-average bound of Deshouillers and Iwaniec to the family of cuspidal automorphic representations of $\GL_n(\A_{\Q})$ of arithmetic conductor 1 and growing analytic conductor.  Blomer \cite[Corollary 5]{Blomer} proved the corresponding result for the family of cuspidal automorphic representations of $\GL_n(\A_{\Q})$ of a large given prime arithmetic conductor $q$, trivial central character, and whose archimedean components are principal series representations confined to a compact subset of the unitary dual.  Thorner and Zaman \cite[Theorem 1.3]{TZ_GLn} proved that there exists a constant $\Cl[abcon]{TZ_gln} = \Cr{TZ_gln}(n,[F:\Q])>0$ such that if $\epsilon>0$, then
\begin{equation}
\label{eqn:tzlargesievedensity}
|\{\pi\in\mathfrak{F}_n(Q)\colon |L(\tfrac{1}{2},\pi)|\geq \Cr{TZ_gln} C(\pi)^{\frac{1}{4}-\frac{\epsilon}{10^{16}n^3}}\}|\ll_{F}|\mathfrak{F}_n(Q)|^{\epsilon}.
\end{equation}
Unlike the preceding results, \eqref{eqn:tzlargesievedensity} is uniform in both the arithmetic conductor and spectral aspects and holds over number fields other than $\Q$, but the savings over \eqref{eqn:ST_weak} is not comparable to GLH on average.

Given $\pi\in\mathfrak{F}_n$ and $\pi'\in\mathfrak{F}_{n'}$, Soundararajan and Thorner \cite[Corollary 2.7]{ST} proved when $F=\Q$ that if $C(\pi\times\pi')$ is the analytic conductor of $L(s,\pi\times\pi')$, then
\begin{equation}
\label{eqn:soundthornerRS}
L(\tfrac{1}{2},\pi\times\pi')\ll |L(\tfrac{3}{2},\pi\times\pi')|^2 C(\pi\times\pi')^{\frac{1}{4}} (\log C(\pi\times\pi'))^{-1/(10^{17}(n'n)^3)}.
\end{equation}
As of now, the best general upper bound for $|L(\tfrac{3}{2},\pi\times\pi')|^2$ is larger than any fixed power of $\log C(\pi\times\pi')$ \cite[Theorem 2]{Li}.  The factor of $|L(\tfrac{3}{2},\pi\times\pi')|^2$ can be removed under certain partial progress toward GRC.  The bound $L(\frac{1}{2},\pi\times\pi')\ll_{\epsilon}C(\pi\times\pi')^{\epsilon}$ is predicted by GLH.

In order to improve \eqref{eqn:soundthornerRS} on average with uniformity in $\pi$ and $\pi'$, one might first try to mimic the approach that worked well for Dirichlet $L$-functions and $\GL_2$ $L$-functions using trace formulae, approximate functional equations, the spectral large sieve, Vorono\u{\i} summation, etc. While such methods have seen great success for $\GL_n\times\GL_{n'}$ with $n,n' \in \{1,2\}$, suitably uniform and flexible versions of these tools do not appear to be available yet in the general setting.  The special case where $|n-n'|\leq 1$ exhibits some nice structural properties, lending itself to approaches via period integrals that completely avoids the aforementioned tools.  To describe work in this direction, let $F=\Q$, $\mathcal{F}_n\subseteq\mathfrak{F}_n$ be the subset of cuspidal automorphic representations of $\mathrm{GL}_n(\A_{\Q})$ of arithmetic conductor 1, and $\mathcal{F}_n(Q)=\{\pi\in\mathcal{F}_n\colon C(\pi)\leq Q\}$.  It follows from work of Jana \cite[Corollary 2.2]{Jana} that if $\epsilon>0$, $\pi'\in\mathcal{F}_n$, and the spectral parameters of $\pi'$ have real part at least $-\frac{1}{n^2+1}$ (which is far stronger than the best known unconditional lower bound $-\frac{1}{2}+\frac{1}{n^2+1}$ due to Luo, Rudnick, and Sarnak \cite{LRS}), then
\begin{equation}
\label{eqn:Jana}
\sum_{\pi\in\mathcal{F}_n(Q)}|L(\tfrac{1}{2},\pi\times\pi')|^2\ll_{\pi',\epsilon}|\mathcal{F}_n(Q)|^{1+\epsilon}.
\end{equation}
Therefore, for fixed $\pi'\in\mathcal{F}_n$, the generalized Lindel{\"o}f hypothesis for $L(\frac{1}{2},\pi\times \pi')$ holds on average over the $\pi\in\mathcal{F}_n$.  Using Chebyshev's inequality, we conclude that for all $\delta>0$, there exists a constant $c_{\pi',\delta}>0$ such that
\begin{equation}
\label{eqn:Jana2}
|\{\pi\in\mathcal{F}_n(Q)\colon |L(\tfrac{1}{2},\pi\times \pi')|\geq c_{\pi',\delta} C(\pi\times \pi')^{2\delta}\}|\ll_{\pi',\delta}|\mathcal{F}_n(Q)|^{1-\delta}.	
\end{equation}
See also the work of Blomer \cite[Theorem 2]{Blomer_RS}, which proves a variant of \eqref{eqn:Jana} for families of ``spectrally close'' Hecke--Maa{\ss} newforms on $\mathrm{SL}_n(\Z)$.

Along the same lines as \eqref{eqn:tzlargesievedensity}, we use \eqref{eqn:ZDE_F} to prove the following result.

\begin{theorem}
\label{thm:subconvexity}
Let $n,n'\geq 1$ and $Q\geq 1$.  If $\epsilon>0$ and $\pi'\in\mathfrak{F}_{n'}$, then
\[
|\{\pi\in\mathfrak{F}_n(Q)\colon |L(\tfrac{1}{2},\pi\times\pi')|\geq C(\pi\times\pi')^{\frac{1}{4}-\frac{\epsilon}{10^{10}\max\{n,n'\}}}\}|\ll_{\epsilon}(C(\pi')^{n} |\mathfrak{F}_n(Q)|)^{\epsilon}.
\]
\end{theorem}
\begin{remark}
Given a subset $\mathcal{S}\subseteq\mathfrak{F}_n$, a similar result can be proved for $\pi\in\mathcal{S}(Q)$ using \cref{thm:ZDE}.  Such a result would depend effectively on $\delta_{\mathcal{S}}$ in \eqref{eqn:subfamily2}.
\end{remark}

In contrast with Jana's work in \eqref{eqn:Jana2}, \cref{thm:subconvexity} provides a smaller power-saving improvement over \eqref{eqn:soundthornerRS}, but the improvement is uniform in the arithmetic conductor and spectral aspects as well as in $\pi'$.  The exceptional set in \cref{thm:subconvexity} is a much smaller than in \eqref{eqn:Jana2}.  \cref{thm:subconvexity} removes the requirements that $n=n'$, that $q_{\pi}=q_{\pi'}=1$, and that the spectral parameters of $\pi'$ have real part at least $-\frac{1}{n^2+1}$.  Finally, \cref{thm:subconvexity} is proved over any number field, while \cite{Jana} is only proved over $\Q$.

\subsection{Effective multiplicity one}
\label{subsec:mult_one}

Let $\pi=\bigotimes_v \pi_v$ and $\pi'=\bigotimes_v \pi_v'$ be cuspidal automorphic representations in $\mathfrak{F}_n(Q)$. Under the assumption of GRH for $L(s,\pi\times\tilde{\pi})$ and $L(s,\pi\times\pi')$ and that $\pi_{\kp}$ and $\pi_{\kp}'$ are tempered for all prime ideals $\kp \mid \kq_{\pi}\kq_{\pi'}$, it is known that if $Y=(\log Q)^2$ and $\pi_{\kp}\cong\pi_{\kp}'$ for all $\kp\nmid \kq_{\pi}\kq_{\pi'}$ with $\N\kp\ll Y$, then $\pi=\pi'$ \cite[Proposition 5.22]{IK}. Brumley \cite{Brumley}, improving on work of Moreno \cite{Moreno}, proved that there exists a constant $B_n>0$ such that this result holds unconditionally with $Y=Q^{B_n}$.  This result makes effective the multiplicity one theorems of Jacquet and Shalika \cite[Theorem 4.8]{JS} and Piatetski-Shapiro \cite{MR546599}.  Any fixed $B_n>2n$ suffices \cite{MR2522710}.

When $n=2$, we have an average result that nearly achieves what GRH predicts.  Specifically, let $\mathfrak{F}_2^{\flat}$ be the subset of $\pi\in\mathfrak{F}_2$ with squarefree conductor and trivial central character, and let $\pi'\in\mathfrak{F}_2^{\flat}$. For all $\epsilon>0$, there exists an effectively computable constant $N_{\epsilon}>0$, depending at most on $\epsilon$ and $[F:\Q]$, such that
\begin{equation}
\label{eqn:DK}
|\{\pi\in\mathfrak{F}_2^{\flat}(Q)\colon \textup{$\pi_{\kp}\cong\pi_{\kp}'$ for all $\kp\nmid\kq_{\pi}$ with $\N\kp\leq (\log Q)^{N_{\epsilon}}$}\}|\ll_{\epsilon} Q^{\epsilon}.
\end{equation}
In particular, the implied constant does not depend on $\pi'$.  This was proved by Duke and Kowalski \cite[Theorem 3]{DK} when $F=\Q$ in a stronger form under the assumption of GRC.  See Brumley's Ph.D.\ thesis \cite[Corollary 5.2.2]{Brumley_Thesis} for a proof that does not use GRC.

If $\pi\in\mathfrak{F}_2$ and $\tilde{\pi}\in\mathfrak{F}_2$ is the contragredient, then $L(s,\pi\times\tilde{\pi})=\zeta_F(s)L(s,\pi,\Ad)$, where $\zeta_F(s)$ is the Dedekind zeta function of $F$ and $\mathrm{Ad}$ is the adjoint square lift from a representation of $\GL_2(\A_F)$ to a representation of $\GL_3(\A_F)$.  The arguments in \cite{Brumley_Thesis,DK} rely on two key results:
\begin{enumerate}
	\item Gelbart and Jacquet \cite{GJ} proved that if $\pi\in\mathfrak{F}_2$, then $L(s,\pi,\Ad)$ is the $L$-function of an automorphic representation of $\GL_3(\A_F)$, denoted $\Ad\,\pi$, complete with a criterion by which one can determine whether $\mathrm{Ad}\,\pi$ is cuspidal (and lies in $\mathfrak{F}_3$).
	\item At most $O_{\epsilon}(Q^{1/2+\epsilon})$ representations $\pi\in\mathfrak{F}_2(Q)$ have the same adjoint lift.  Also, Ramakrishnan (\cite[Appendix]{DK} and \cite{Ramakrishnan}) proved that $\mathrm{Ad}\colon \mathfrak{F}_2^{\flat}\to\mathfrak{F}_3$ is injective.
\end{enumerate}

In attempting to generalize the strategy of Duke and Kowalski to $\GL_n$ for $n\geq 3$, one encounters some deep open problems.  If $\pi\in\mathfrak{F}_n$, then for $\re(s)$ sufficiently large, $L(s,\pi\times\tilde{\pi})$ factors as $\zeta_F(s)L(s,\pi,\mathrm{Ad}\,)$, where $\mathrm{Ad}\,$ is the adjoint square lift from $\GL_{n}$ to $\GL_{n^2-1}$.  Apart from some special cases, the following obstacles arise:
\begin{enumerate}
	\item The adjoint lift is not yet known to be automorphic for $n\geq 3$, and if it were, there is no known criterion for cuspidality.
	\item Let $\mathcal{H}_n\subseteq\mathfrak{F}_n(Q)$ be the subset of $\pi\in\mathfrak{F}_n(Q)$ such that $\mathrm{Ad}\,\pi\in\mathfrak{F}_{n^2-1}$.  It is not known how many $\pi\in\mathcal{H}_n$ have the same adjoint lift.
\end{enumerate}

Despite these setbacks, we can use \eqref{eqn:ZDE_Q} (more specifically, \cref{cor:good_ZFR} below) to prove a $\GL_n$ analogue of \eqref{eqn:DK} when $\pi'$ depends mildly on $Q$.
\begin{theorem}
	\label{thm:multiplicity_one}
Let $n\geq 3$ and $Q\geq 1$.  There exists an absolute, effectively computable constant $\Cl[abcon]{multone_1}>0$ such that if $0<\epsilon<1$ and $\pi'\in\mathfrak{F}_n((\log Q)^{\Cr{multone_1}/(n^2 [F:\Q]^2)})$, then
\[
|\{\pi\in\mathfrak{F}_n(Q)\colon \pi_{\kp}\cong\pi_{\kp}'~\textup{for all}~\kp\nmid\kq_{\pi}\kq_{\pi'}~\textup{with}~\N\kp\leq (\log Q)^{41n^2/\epsilon}\}|\ll_{\epsilon}Q^{\epsilon}.
\]
\end{theorem}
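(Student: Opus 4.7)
The strategy is to adapt the Duke--Kowalski approach to higher rank, replacing $\mathrm{Ad}^2$ functoriality with \cref{thm:good_ZFR} applied to the family $\{L(s,\pi\times\tilde{\pi}')\colon\pi\in\mathfrak{F}_n(Q)\}$. Fix $\pi'\in\mathfrak{F}_n$ and write $Y=(\log Q)^{41n^2/\epsilon}$. Suppose $\pi\in\mathfrak{F}_n(Q)$ satisfies $\pi_\kp\cong\pi'_\kp$ at every $\kp\nmid\kq_\pi\kq_{\pi'}$ with $\N\kp\leq Y$. Equating Satake parameters at each such $\kp$, the Dirichlet coefficients of $-L'/L(s,\pi\times\tilde{\pi})$ and $-L'/L(s,\pi\times\tilde{\pi}')$ then coincide at every prime power $\kp^k$ with $\kp$ unramified for both $\pi$ and $\pi'$ and $\N\kp^k\leq Y$.

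I would then fix a smooth nonnegative weight $\phi$ supported in $[\tfrac{1}{2},1]$ with positive mean and rapidly-decaying Mellin transform $\tilde{\phi}$, and compare
\[
D(\pi)\coloneqq\sum_{\kn}\bigl(a_{\pi\times\tilde{\pi}}(\kn)-a_{\pi\times\tilde{\pi}'}(\kn)\bigr)\phi(\N\kn/Y),
\]
where $a_{\pi\times\pi''}(\kn)$ denotes the $\kn$-th Dirichlet coefficient of $-L'/L(s,\pi\times\pi'')$. The support constraint $\N\kn\leq Y$ together with the coincidence above restricts the arithmetic side of $D(\pi)$ to prime powers of the $\ll\log Q$ primes dividing $\kq_\pi\kq_{\pi'}$; with the Luo--Rudnick--Sarnak bound $\theta_n<\tfrac{1}{2}$ for local Satake parameters at ramified places this yields $|D(\pi)|\ll Y^{2\theta_n}(\log Q)^{O(1)}$, which is $o(Y)$.

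On the analytic side, Mellin inversion expresses $D(\pi)$ as contour integrals which I would shift to $\re(s)=\tfrac{1}{2}$, picking up the residue $Y\tilde{\phi}(1)$ from the simple pole of $L(s,\pi\times\tilde{\pi})$ at $s=1$ (the convolution $L(s,\pi\times\tilde{\pi}')$ is entire since $\pi\neq\pi'$) together with zero sums $\sum_\rho\tilde{\phi}(\rho)Y^\rho$ for each $L$-function. The potential Landau--Siegel zero of $L(s,\pi\times\tilde{\pi})$ would be ruled out effectively by the authors' prior work \cite{HT} under the growth hypothesis $C(\pi')\ll(\log Q)^{\Cr{multone_1}/(n^4[F:\Q]^2)}$. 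Outside an exceptional set of cardinality $O_\epsilon((C(\pi')Q)^\epsilon)=O_\epsilon(Q^{2\epsilon})$, \cref{thm:good_ZFR} provides a zero-free region of width $c\geq\epsilon/(20n^2)$ for $L(s,\pi\times\tilde{\pi}')$ up to the requisite height (the hypothesis on $C(\pi')$ ensures the $\log C(\pi)$ numerator in \cref{thm:good_ZFR} dominates, once one also discards the few $\pi$ with $C(\pi)$ close to the bottom of the range). Rapid decay of $\tilde{\phi}$ then bounds the zero sums by $O(Y^{1-c}(\log Q)^{O(1)})$.

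Combining the two sides forces $Y\tilde{\phi}(1)\ll Y^{1-c}(\log Q)^{O(1)}+Y^{2\theta_n}(\log Q)^{O(1)}$, hence $Y^c\ll(\log Q)^{O(1)}$; this contradicts $Y=(\log Q)^{41n^2/\epsilon}$ once $c\geq\epsilon/(20n^2)$ and $Q$ is sufficiently large, with the factor $41$ providing comfortable slack over the threshold $19$ that \cref{thm:good_ZFR} alone would dictate. Every $\pi$ satisfying the coincidence hypothesis therefore lies in the exceptional set of \cref{thm:good_ZFR}, yielding the claimed $O_\epsilon(Q^\epsilon)$ bound (after relabeling $\epsilon$ to absorb the $C(\pi')^\epsilon$ factor). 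The main obstacle I anticipate is the effective quantitative bookkeeping: tracking the $C(\pi')$ and $[F:\Q]$ dependence through the archimedean gamma factors in the explicit formula, through the Landau--Siegel removal of \cite{HT}, and through the constants in \cref{thm:good_ZFR}, so that every $o(Y)$ term really is negligible and the constant $41n^2/\epsilon$ in the size of $Y$ can be secured uniformly in $\pi'$.
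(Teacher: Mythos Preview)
Your overall architecture is right, and your use of \cref{thm:good_ZFR} for $L(s,\pi\times\tilde\pi')$ matches the paper. But there is a genuine gap in your choice of the second $L$-function: you extract the pole from $L(s,\pi\times\tilde\pi)$, and this cannot be controlled at the scale $Y=(\log Q)^{41n^2/\epsilon}$.

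The problem is that $C(\pi)$ ranges up to $Q$. The standard zero-free region for $L(s,\pi\times\tilde\pi)$ from \cite{HT} has width $\asymp 1/\log C(\pi)$, and the effective Siegel bound there is $\beta_1\le 1-C(\pi)^{-\Cr{Hoh6}n}$; both depend on $C(\pi)$, not on $C(\pi')$, so the hypothesis $C(\pi')\ll(\log Q)^{\Cr{multone_1}/(n^4[F:\Q]^2)}$ does nothing for them. Even with no Siegel zero, the zeros of $L(s,\pi\times\tilde\pi)$ at distance $\asymp 1/\log Q$ from $s=1$ contribute $Y^{1-c/\log Q}=Y\exp\bigl(-c(\log Y)/\log Q\bigr)$ to your zero sum, and since $\log Y\asymp\log\log Q$ this is $Y(1+o(1))$, not $o(Y)$. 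Equivalently, the prime number theorem for $\pi\times\tilde\pi$ in \cref{lem:HumphriesT} only kicks in once $x\ge C(\pi)^{\Cr{Hoh3}A^2n^3[F:\Q]\log(en[F:\Q])}$, which for $C(\pi)$ near $Q$ is far larger than any power of $\log Q$. Theorem~\ref{thm:good_ZFR} cannot rescue you either, since it fixes $\pi'$ and varies $\pi$, whereas here the twist $\tilde\pi$ moves with $\pi$.

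The fix is to swap one slot: since $\pi_\kp\cong\pi_\kp'$ forces $\Lambda_{\pi'\times\tilde\pi'}(\kn)=\Lambda_{\pi\times\tilde\pi'}(\kn)$ on unramified $\kn$ with $\N\kn\le Y$, compare $\pi'\times\tilde\pi'$ with $\pi\times\tilde\pi'$ instead. Now the pole and its Siegel zero live in $L(s,\pi'\times\tilde\pi')$, whose conductor is the fixed small $C(\pi')$, and the hypothesis on $C(\pi')$ is exactly what makes \cref{lem:HumphriesT} give the lower bound $\sum_{x/2<\N\kn\le x}\Lambda_{\pi'\times\tilde\pi'}(\kn)\gg x^{1-\delta}$ at $x\asymp(\log Q)^{41n^2/\epsilon}$ (this is \cref{cor:lower_S}). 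The upper bound on $\sum\Lambda_{\pi\times\tilde\pi'}(\kn)\Phi(\N\kn/x)$ then comes from \cref{thm:good_ZFR} exactly as you outlined (this is \cref{lem:upper_bound_S}), and the numerology $41>40$ closes the argument.
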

\begin{remark}
Given a subset $\mathcal{S}\subseteq\mathfrak{F}_n$, a similar result can be proved for $\pi\in\mathcal{S}(Q)$ using \cref{thm:ZDE}.  Such a result would depend effectively on $\delta_{\mathcal{S}}$ in \eqref{eqn:subfamily2}.
\end{remark}

The proof is very flexible.  For example, if $\epsilon>0$ and $C(\pi')\ll_{\epsilon}Q^{\Cr{multone_1}\epsilon^2/(n^2[F:\Q]^2)}$, then the same proof with minor changes in choices of parameters produces the bound
\[
|\{\pi\in\mathfrak{F}_n(Q)\colon \pi_{\kp}\cong\pi_{\kp}'~\textup{for all}~\kp\nmid\kq_{\pi}\kq_{\pi'}~\textup{with}~\N\kp\leq Q^{\epsilon}\}|\ll_{\epsilon}Q^{\epsilon}.
\]
While the number of $\kp$ for which one needs to check that $\pi_{\kp}\cong\pi_{\kp}'$ is larger, the range of $C(\pi')$ is greatly extended, and the threshold $\N\kp\leq Q^{\epsilon}$ (reminiscent of Vinogradov's conjecture on the size of the least quadratic nonresidue) still greatly improves on the unconditional range $\N\kp\ll_{\epsilon}Q^{2n+\epsilon}$ from \cite{MR2522710}.

\subsection{Automorphic level of distribution}

Let $\Lambda(m)$ be the von Mangoldt function, equal to $\log p$ if $m$ is a power of a prime $p$ and zero otherwise.  The celebrated Bombieri--Vinogradov theorem states that if $\theta<\frac{1}{2}$ is fixed, then for all $A>0$, we have
\begin{equation}
\label{eqn:BV}
\sum_{q\leq x^{\theta}}~\max_{\gcd(a,q)=1}\max_{y\leq x}\Big|\sum_{\substack{m\leq y \\ m\equiv a\pmod{q}}}\Lambda(m)-\frac{y}{\varphi(q)}\Big|\ll_A\frac{x}{(\log x)^A}.
\end{equation}
This may be viewed as an average form of GRH for Dirichlet $L$-functions.  As part of his proof \cite{Bombieri}, Bombieri proved a strong form of the zero density estimate in \cref{thm:ZDE} for Dirichlet $L$-functions.  We call any $\theta$ for which \eqref{eqn:BV} holds a {\it level of distribution} for the primes.  Elliott and Halberstam conjectured that any fixed $\theta<1$ is a level of distribution for the primes.

Number theorists have proved several interesting extensions and variations of \eqref{eqn:BV}.  For example, Murty and Murty \cite{MM} proved that primes in the Chebotarev density theorem have a positive level of distribution.  To describe a different direction for automorphic representations over $\Q$, we let $n\geq 2$ and consider $\pi\in\mathfrak{F}_{n}$ with conductor $q_{\pi}$.  Let $\Lambda(m)$ be the von Mangoldt function, and define the numbers $a_{\pi}(m)$ by
\[
-\frac{L'}{L}(s,\pi)=\sum_{p}\sum_{k=1}^{\infty}\frac{\sum_{j=1}^{n}\alpha_{j,\pi}(p)^k\log p}{p^{ks}}=\sum_{n=1}^{\infty}\frac{a_{\pi}(m)\Lambda(m)}{m^s},\qquad \re(s)>1.
\]
Note that $a_{\pi}(p)=\lambda_{\pi}(p)$.  For fixed $\theta<\frac{1}{n^2-2}$, Wong \cite[Theorem 9]{Wong} proved that if $\pi$ satisfies GRC and $L(s,\pi\times(\tilde{\pi}\otimes\chi))$ has no Landau--Siegel zero for all Dirichlet characters $\chi$, then for any $A>0$, 
\begin{equation}
\label{eqn:BV_Wong}
\sum_{q\leq x^{\theta}}~\max_{\gcd(a,q)=1}\max_{y\leq x}\Big|\sum_{\substack{m\leq y \\ \gcd(m,q_{\pi})=1 \\ m\equiv a\pmod{q}}}|a_{\pi}(m)|^2\Lambda(m)-\frac{y}{\varphi(q)}\Big|\ll_{A,\pi}\frac{x}{(\log x)^A}.
\end{equation}
This conditionally endows $L(s,\pi\times\tilde{\pi})$ with a positive level of distribution $\theta$.  The hypotheses for \eqref{eqn:BV_Wong} hold for $\pi$ attached to non-CM holomorphic cuspidal newforms on congruence subgroups of $\SL_2(\Z)$.

Let $\pi\in\mathfrak{F}_n$.  Using \eqref{eqn:ZDE_Q}, we unconditionally endow $L(s,\pi\times\tilde{\pi})$ with a notion of positive level of distribution.  In particular, we avoid recourse to unproven progress toward GRC or the absence of Landau--Siegel zeros.

\begin{theorem}
	\label{thm:BV}
	Let $F=\Q$ and $\pi\in\mathfrak{F}_{n}$.  Fix $\theta<1/(9n^3)$.  If $A>0$, then
	\[
	\sum_{\substack{q\leq x^{\theta} \\ \gcd(q,q_{\pi})=1}}\max_{\gcd(a,q)=1}\max_{y\leq x}\Big|\sum_{\substack{m\leq y \\ m\equiv a\pmod{q}}}|a_{\pi}(m)|^2\Lambda(m)-\frac{y}{\varphi(q)}\Big|\ll_{A,\pi}\frac{x}{(\log x)^A}.
	\]
	The implied constants are ineffective.
\end{theorem}

\begin{remark}
One can prove an analogue of \cref{thm:BV} with $F\neq\Q$, replacing residue classes modulo $q$ with ray classes modulo $\kq$.  We restrict to $F=\Q$ for notational simplicity.
\end{remark}

\subsection*{Overview of the paper}  In \cref{sec:L-functions}, we recall basic properties of standard $L$-functions and Rankin--Selberg $L$-functions that we will use in our proofs.  In \cref{sec:large_sieve}, we prove a large sieve inequality for the Dirichlet coefficients of $L(s,\pi\times\pi')^{-1}$ and a corollary on mean values of Dirichlet polynomials, which we use in our proof of \cref{thm:ZDE} in \cref{sec:ZDE}.   We then prove \cref{thm:subconvexity} in \cref{sec:good_ZFR}, \cref{thm:multiplicity_one} in  \cref{sec:multiplicity_one}, and \cref{thm:BV} in \cref{sec:BV}.

\subsection*{Acknowledgements}

We thank Gergely Harcos, Rizwanur Khan, and the anonymous referee for helpful comments.

\section{Properties of \texorpdfstring{$L$}{L}-functions}
\label{sec:L-functions}

We recall some standard facts about $L$-functions arising from automorphic representations and their Rankin--Selberg convolutions.  See \cite{Brumley,GJ2,JPSS,MW,ST}.

\subsection{Standard \texorpdfstring{$L$}{L}-functions}

Given $\pi\in\mathfrak{F}_n$, let $\widetilde{\pi}\in\mathfrak{F}_n$ be the contragredient representation and $\kq_{\pi}$ be the conductor of $\pi$.  We express $\pi$ as a restricted tensor product $\bigotimes_v \pi_v$ of smooth admissible representations of $\GL_n(F_v)$, where $v$ varies over places of $F$.  When $v$ is a nonarchimedean place corresponding with a prime ideal $\kp$, then the local $L$-function $L(s,\pi_{\kp})$ is defined in terms of the Satake parameters $A_{\pi}(\kp)=\{\alpha_{1,\pi}(\kp),\ldots,\alpha_{n,\pi}(\kp)\}$ by
\begin{equation}
	\label{eqn:Euler_p_single}
	L(s,\pi_{\kp})=\prod_{j=1}^{n}(1-\alpha_{j,\pi}(\kp)\N\kp^{-s})^{-1}=\sum_{k=0}^{\infty}\frac{\lambda_{\pi}(\kp^k)}{\N\kp^{ks}}.
\end{equation}
We have $\alpha_{j,\pi}(\kp)\neq0$ for all $j$ whenever $\kp\nmid\kq_{\pi}$, and when $\kp \mid \kq_{\pi}$, it might be the case that there exist $j$ such that $\alpha_{j,\pi}(\kp)=0$.  The standard $L$-function $L(s,\pi)$ associated to $\pi$ is of the form
\[
L(s,\pi)=\prod_{\kp} L(s,\pi_{\kp})=\sum_{\kn}\frac{\lambda_{\pi}(\kn)}{\N\kn^s}.
\]
The Euler product and Dirichlet series converge absolutely when $\re(s)>1$.

At each archimedean place $ v$ of $F$, there are $n$ Langlands parameters $\mu_{j,\pi}(v)\in\mathbb{C}$ such that
\[
L(s,\pi_{\infty}) = \prod_{v|\infty}\prod_{j=1}^{n}\Gamma_{ v}(s+\mu_{j,\pi}(v)),\qquad \Gamma_{v}(s)\coloneqq \begin{cases}
	\pi^{-s/2}\Gamma(s/2)&\mbox{if $F_{ v}=\R$,}\\
	2(2\pi)^{-s}\Gamma(s)&\mbox{if $F_{ v}=\mathbb{C}$.}
\end{cases}
\]
By combining the work in \cite{BB2,BB,LRS,MS}, we know that there exists
\begin{equation}
\label{eqn:ramanujan_progress}
0\leq\theta_n\leq\begin{cases}
0&\mbox{if $n=1$,}\\
7/64&\mbox{if $n=2$,}\\
5/14&\mbox{if $n=3$,}\\
9/22&\mbox{if $n=4$,}\\
1/2-1/(n^2+1)&\mbox{if $n\geq 5$}
\end{cases}
\end{equation}
such that
\begin{equation}
\label{eqn:LRS_finite}
	|\alpha_{j,\pi}(\kp)|\leq  \N\kp^{\theta_n}\qquad\textup{ and }\qquad\re(\mu_{j,\pi}(v))\geq -\theta_n.
\end{equation}
GRC asserts that in \eqref{eqn:ramanujan_progress}, one may take $\theta_n=0$.  We have $\kq_{\pi}=\kq_{\widetilde{\pi}}$, and for each $\kp$ and each $ v$, we have the equalities of sets $\{\alpha_{j,\widetilde{\pi}}(\kp)\}=\{\overline{ \alpha_{j,\pi}(\kp)}\}$ and $\{\mu_{j,\widetilde{\pi}}(v)\}=\{\overline{\mu_{j,\pi}(v)}\}$.

Let $r_{\pi}$ be the order of the pole of $L(s,\pi)$ at $s=1$.  The completed $L$-function
\[
\Lambda(s,\pi) = (s(s-1))^{r_{\pi}}(D_F^n \N\kq_{\pi})^{s/2}L(s,\pi)L(s,\pi_{\infty})
\]
is entire of order 1, and there exists a complex number $W(\pi)$ of modulus 1 such that for all $s\in\mathbb{C}$, we have the functional equation $\Lambda(s,\pi)=W(\pi)\Lambda(1-s,\widetilde{\pi})$.  Let $d(v)=1$ if $F_{ v}=\R$ and $d(v)=2$ if $F_{ v}=\mathbb{C}$.  The analytic conductor of $\pi$ \cite{IS} is given by
\begin{equation}
\label{eqn:analytic_conductor_def}
C(\pi,t)\coloneqq D_F^n \N\kq_{\pi}\prod_{v|\infty}\prod_{j=1}^n(3+|it+\mu_{j,\pi}(v)|^{d(v)}),\qquad C(\pi)\coloneqq C(\pi,0).
\end{equation}
Since $\Lambda(s,\pi)$ is entire of order 1, there exist complex numbers $a_{\pi}$ and $b_{\pi}$ such that
\[
\Lambda(s,\pi)=e^{a_{\pi}+b_{\pi}s}\prod_{\Lambda(\rho,\pi)=0}\Big(1-\frac{s}{\rho}\Big)e^{s/\rho}.
\]
The zeros $\rho$ in the above Hadamard product are the nontrivial zeros of $L(s,\pi)$, and the zeros of $L(s,\pi)$ that arise as poles of $s^{r_{\pi}}L(s,\pi_{\infty})$ are the trivial zeros.

\subsection{Rankin--Selberg \texorpdfstring{$L$}{L}-functions}
\label{subsec:RS}

Let $\pi\in\mathfrak{F}_n$ and $\pi'\in\mathfrak{F}_{n'}$.  At each prime ideal $\kp$, Jacquet, Piatetski-Shapiro, and Shalika \cite{JPSS} associate to $\pi_{\kp}$ and $\pi_{\kp}'$ a local Rankin--Selberg $L$-function
\begin{equation}
\label{eqn:RS_Dirichlet_series}
L(s,\pi_{\kp}\times\pi_{\kp}')=\prod_{j=1}^{n}\prod_{j'=1}^{n'}(1-\alpha_{j,j',\pi\times\pi'}(\kp) \N\kp^{-s})^{-1}=\sum_{k=0}^{\infty}\frac{\lambda_{\pi\times\pi'}(\kp^k)}{\N\kp^{ks}}
\end{equation}
and a local conductor $\kq_{\pi_{\kp}\times\pi_{\kp}'}$.  If $\kp\nmid \kq_{\pi}\kq_{\pi'}$, then we have the equality of sets
\begin{equation}
\label{eqn:separate_dirichlet_coeffs}
\{\alpha_{j,j',\pi\times\pi'}(\kp)\}=\{\alpha_{j,\pi}(\kp)\alpha_{j',\pi'}(\kp)\}.
\end{equation}
The Rankin--Selberg $L$-function $L(s,\pi\times\pi')$ associated to $\pi$ and $\pi'$ and its arithmetic conductor are
\[
L(s,\pi\times\pi')=\prod_{\kp}L(s,\pi_{\kp}\times\pi_{\kp}')=\sum_{\kn}\frac{\lambda_{\pi\times\pi'}(\kn)}{\N\kn^s},\qquad \kq_{\pi\times\pi'}=\prod_{\kp}\kq_{\pi_{\kp}\times\pi_{\kp}'}.
\]
At an archimedean place $v$ of $F$, Jacquet, Piatetski-Shapiro, and Shalika associate $n'n$ complex Langlands parameters $\mu_{j,j',\pi\times\pi'}(v)$ to $\pi_v$ and $\pi_v'$, from which one defines
\[
L(s,\pi_{\infty}\times\pi_{\infty}') = \prod_{v|\infty}\prod_{j=1}^{n}\prod_{j'=1}^{n'}\Gamma_{ v}(s+\mu_{j,j',\pi\times\pi'}(v)).
\]
Using the explicit descriptions of $\alpha_{j,j',\pi\times\pi'}(\kp)$ and $\mu_{j,j',\pi\times\pi'}(v)$ in \cite{Humphries,ST}, one sees that
\begin{equation}
\label{eqn:LRS_2}
|\alpha_{j,j',\pi\times\pi'}(\kp)|\leq\N\kp^{\theta_n + \theta_{n'}},\qquad \re(\mu_{j,j',\pi\times\pi'}(v))\geq -\theta_n - \theta_{n'}.
\end{equation}

Let $r_{\pi\times\pi'} = -\mathrm{ord}_{s=1}L(s,\pi\times\pi')$.  By our normalization for the central characters of $\pi$ and $\pi'$, we have that $r_{\pi\times\pi'}=0$ if and only if $\pi\neq \widetilde{\pi}'$, and $r_{\pi\times\widetilde{\pi}}=1$ otherwise.  The function
\begin{equation}
\label{eqn:Lambdaspixpi'}
\Lambda(s,\pi\times\pi')=(s(s-1))^{r_{\pi\times\pi'}}(D_F^{n'n}\N\kq_{\pi\times\pi'})^{s/2}L(s,\pi\times\pi')L(s,\pi_{\infty}\times\pi_{\infty}')
\end{equation}
is entire of order 1, and there exists a complex number $W(\pi\times\pi')$ of modulus 1 such that $\Lambda(s,\pi\times\pi')$ satisfies the functional equation $\Lambda(s,\pi\times\pi')=W(\pi\times\pi')\Lambda(1-s,\widetilde{\pi}\times\widetilde{\pi}')$.  As with $L(s,\pi)$, the analytic conductor of $L(s,\pi\times\pi')$ is given by
\begin{equation}
\label{eqn:analytic_conductor_def_2}
C(\pi\times\pi',t)\coloneqq D_F^{n'n}\N\kq_{\pi\times\pi'}\prod_{v|\infty}\prod_{j=1}^n \prod_{j'=1}^{n'}(3+|it+\mu_{j,j',\pi\times\pi'}(v)|^{d(v)}),\qquad C(\pi\times\pi')\coloneqq C(0,\pi\times\pi').
\end{equation}
The combined work of Bushnell and Henniart \cite{BH} and Brumley \cite[Appendix]{Humphries} yields
\begin{equation}
\label{eqn:BH}
C(\pi\times\pi',t)\ll C(\pi\times\pi')(3+|t|)^{[F:\Q] n'n},\qquad C(\pi\times\pi')\ll C(\pi)^{n'}C(\pi')^{n}.
\end{equation}
Since $\Lambda(s,\pi\times\pi')$ is entire of order 1, there exist complex numbers $a_{\pi\times\pi'}$ and $b_{\pi\times\pi'}$ such that the Hadamard factorization
\begin{equation}
\label{eqn:hadamard}
\Lambda(s,\pi\times\pi')=e^{a_{\pi\times\pi'}+b_{\pi\times\pi'}s}\prod_{\Lambda(\rho,\pi\times\pi')=0}\Big(1-\frac{s}{\rho}\Big)e^{s/\rho}
\end{equation}
holds.  The zeros $\rho$ in \eqref{eqn:hadamard} are the nontrivial zeros of $L(s,\pi\times\pi')$, and the zeros of $L(s,\pi\times\pi')$ that arise as poles of $s^{r_{\pi\times\pi'}}L(s,\pi_{\infty}\times\pi_{\infty}')$ are the trivial zeros.

It follows from work of Li \cite[Theorem 2]{Li} (with minor adjustments when $F\neq \Q$) that there exists an absolute and effectively computable constant $\Cl[abcon]{Li}>0$, which we assume to be sufficiently large for future convenience, such that
\begin{equation}
\label{eqn:Li_sharper}
\lim_{\sigma_0\to\sigma}(\sigma_0-1)^{r_{\pi\times\pi'}}L(\sigma_0,\pi\times\pi')\ll \exp\Big(\Cr{Li}n'n[F:\Q]\frac{\log C(\pi\times\pi')}{\log\log C(\pi\times\pi')}\Big),\quad \sigma\in[1,3].
\end{equation}
We can change $\pi'$ to $\pi'\otimes \left|\det\right|^{it}$; at the archimedean places, this has the effect of adding $it$ to each $\mu_{j,j',\pi\times\pi'}(v)$.  We then apply functional equation, the Phragm{\'e}n--Lindel{\"o}f convexity principle, and \eqref{eqn:BH} to obtain for all $\sigma\geq 0$
\begin{equation}
\label{eqn:preconvex}
\begin{aligned}
&\lim_{\sigma_0\to\sigma}\Big(\frac{\sigma_0+it-1}{\sigma_0+it+1}\Big)^{r_{\pi\times\pi'}}L(\sigma_0+it,\pi\times\pi')\\
&\ll_{\epsilon} C(\pi\times\pi',t)^{\frac{\max\{1-\sigma,0\}}{2}+\frac{\epsilon}{n'n[F:\Q]}}\\
&\ll_{\epsilon} (C(\pi)^{n'} C(\pi')^n(3+|t|)^{n'n[F:\Q]})^{\frac{\max\{1-\sigma,0\}}{2}+\frac{\epsilon}{n'n[F:\Q]}}.
\end{aligned}
\end{equation}

\begin{lemma}
\label{lem:mertens}
	If $\pi\in\mathfrak{F}_n$, $X\geq 3$, and $\epsilon>0$, then $\sum_{\N\kn\leq X}\lambda_{\pi\times\widetilde{\pi}}(\kn)/\N\kn\ll_{\epsilon} C(\pi)^{\epsilon}\log X$.
\end{lemma}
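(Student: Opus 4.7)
The plan is to combine the standard Rankin--Selberg nonnegativity with the convexity-type upper bound \eqref{eqn:Li_sharper} via a simple Rankin trick. The only genuinely nonobvious input is the nonnegativity of the Dirichlet coefficients $\lambda_{\pi\times\widetilde\pi}(\kn)$, which is classical: at each prime $\kp$ the local factor $\log L(s,\pi_\kp\times\widetilde\pi_\kp)$ has a Dirichlet expansion with nonnegative coefficients (at unramified primes this is visible from $\sum_{k\geq 1}(k\N\kp^{ks})^{-1}|\sum_j \alpha_{j,\pi}(\kp)^k|^2$, and the local analogue persists at ramified primes), so $L(s,\pi\times\widetilde\pi)$ has nonnegative Dirichlet coefficients globally.

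Granting this, first I would bound $1/\N\kn$ for $\N\kn\leq X$ by $e/\N\kn^{1+1/\log X}$, using $\N\kn^{1/\log X}\leq e$. Nonnegativity of the coefficients then lets me extend the sum to all integral ideals:
\[
\sum_{\N\kn\leq X}\frac{\lambda_{\pi\times\widetilde\pi}(\kn)}{\N\kn}\;\leq\; e\sum_{\kn}\frac{\lambda_{\pi\times\widetilde\pi}(\kn)}{\N\kn^{1+1/\log X}}\;=\;e\,L\!\left(1+\tfrac{1}{\log X},\pi\times\widetilde\pi\right).
\]

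Next I would apply \eqref{eqn:Li_sharper} with $\pi'=\widetilde\pi$ (so that $r_{\pi\times\widetilde\pi}=1$) and $\sigma_0=1+1/\log X$, which lies in $[1,3]$ since $X\geq 3$ forces $1/\log X<2$. This yields
\[
\frac{1}{\log X}\,L\!\left(1+\tfrac{1}{\log X},\pi\times\widetilde\pi\right)\ll \exp\!\left(\Cr{Li}\,n^2[F:\Q]\,\frac{\log C(\pi\times\widetilde\pi)}{\log\log C(\pi\times\widetilde\pi)}\right).
\]
By \eqref{eqn:BH} we have $C(\pi\times\widetilde\pi)\ll C(\pi)^{2n}$, so the exponential on the right is $\ll_{\epsilon}C(\pi)^{\epsilon}$ for any fixed $\epsilon>0$. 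Multiplying through by $\log X$ and combining with the previous display gives the desired bound.

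The proof is essentially automatic once the two ingredients are in place; the only potential pitfall is ensuring the choice $\sigma_0=1+1/\log X$ stays inside the admissible range $[1,3]$ of \eqref{eqn:Li_sharper}, which is why the hypothesis $X\geq 3$ is used. No residue computation or contour shift is needed, since we are only after an upper bound of the correct order of magnitude.
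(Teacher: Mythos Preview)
Your proof is correct and follows essentially the same route as the paper: use nonnegativity of $\lambda_{\pi\times\widetilde\pi}(\kn)$ together with the Rankin trick to pass to $L(1+1/\log X,\pi\times\widetilde\pi)$, then invoke Li's bound \eqref{eqn:Li_sharper} (the paper phrases the last step as bounding $(\log X)\mathop{\mathrm{Res}}_{s=1}L(s,\pi\times\widetilde\pi)$ via \eqref{eqn:preconvex}, which amounts to the same thing). Your observation that $X\geq 3$ keeps $\sigma_0\in[1,3]$ is a nice explicit check that the paper leaves implicit.
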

\begin{proof}
	Since $\lambda_{\pi\times\tilde{\pi}}(\kn)\geq 0$ for all $\kn$ by \cite[Lemma a]{Hoffstein}, we observe by \eqref{eqn:Li_sharper} that
	\[
	\sum_{\N\kn\leq X}\frac{\lambda_{\pi\times\widetilde{\pi}}(\kn)}{\N\kn}\leq e \sum_{\kn}\frac{\lambda_{\pi\times\tilde{\pi}}(\kn)}{\N\kn^{1+\frac{1}{\log X}}}\ll (\log X)\mathop{\mathrm{Res}}_{s=1}L(s,\pi\times\tilde{\pi}).
	\]
	The desired bounded now follows from \eqref{eqn:preconvex} with $\sigma=1$, $t=0$, and $\pi'=\tilde{\pi}$.
\end{proof}

\begin{lemma}
\label{lem:GHL}
	Let $J\geq 1$ be an integer.  For all $j\in\{1,\ldots,J\}$, let $t_j\in\R$; $n_j,n_j'$ be positive integers; and $\pi_j\in\mathfrak{F}_{n_j}$ and $\pi_j'\in\mathfrak{F}_{n_j'}$.  Let
	\[
	D(s) = \prod_{j=1}^J L(s+it_j,\pi_j\times\tilde{\pi}_j'),\qquad \Delta(s) = \prod_{j=1}^J \Lambda(s+it_j,\pi_j\times\tilde{\pi}_j'),\qquad Q=\prod_{j=1}^J C(\pi_j)^{n_j'}C(\pi_j')^{n_j}.
	\]
	Let $R=-\mathrm{ord}_{s=1}D(s)$.  If $1<\sigma<2$ and the $\kn$-th Dirichlet coefficient of $-\frac{D'}{D}(s)$ is nonnegative when $\gcd(\kn,\prod_{j=1}^J \kq_{\pi_j}\kq_{\pi_j'})=\cO_F$, then
	\[
	\sum_{\Delta(\rho)=0}\re\Big(\frac{1}{\sigma-\rho}\Big) <\frac{R}{\sigma-1} + \sum_{\substack{1\leq j\leq J \\ \pi_j=\pi_j',~t_j\neq 0}}\frac{\sigma-1}{(\sigma-1)^2+t_j^2}+O(\log Q).
	\]
\end{lemma}
\begin{proof}
	Let $1<\sigma<2$ and
	\[
	D_{\infty}(s) = \prod_{j=1}^{J}L(s,(\pi_j)_{\infty}\times(\tilde{\pi}_j')_{\infty}),\qquad \mathfrak{q}_{D}=\prod_{j=1}^{J}\kq_{\pi_j\times\widetilde{\pi}_j'}.
	\]
	By comparing the logarithmic derivative of $\Delta(s)$ with the logarithmic derivative of its Hadamard factorization
	\[
	\Delta(s)=e^{a_{D}+b_{D}s}\prod_{\Delta(\rho)=0}\Big(1-\frac{s}{\rho}\Big)e^{s/\rho},
	\]
	we find that
	\[
	\sum_{\Delta(\rho)=0}\Big(\frac{1}{\sigma-\rho}+\frac{1}{\rho}\Big)+b_{D}=\frac{D'}{D}(\sigma)+\frac{\log\N\kq_{D}}{2}+\frac{R}{\sigma-1}+\sum_{\substack{1\leq j\leq J \\ \pi_j = \pi_j'}}\Big(\frac{1}{\sigma+it_j-1}+\frac{1}{\sigma+it_j}\Big)+\frac{D_{\infty}'}{D_{\infty}}(\sigma).
	\]
Since $\re(b_{D})=-\sum_{\Delta(\rho)=0}\re(\rho^{-1})$ \cite[Proposition 5.7(3)]{IK}, we take real parts and obtain
	\[
	\sum_{\Delta(\rho)=0}\re\Big(\frac{1}{\sigma-\rho}\Big)=\re\Big(\frac{D'}{D}(s)+\frac{\log\N\kq_{D}}{2}+\frac{R}{\sigma-1}+\sum_{\substack{1\leq j\leq J \\ \pi_j=\pi_j' \\ t_j\neq 0}}\frac{\sigma-1}{(\sigma-1)^2+t_j^2}+\frac{D_{\infty}'}{D_{\infty}}(s)\Big)+O(1).
	\]
	By \eqref{eqn:LRS_finite}, \eqref{eqn:LRS_2}, and \eqref{eqn:BH}, the bound $\re(\frac{D'}{D}(\sigma))\ll\log Q$ (resp.\ $\frac{D_{\infty}'}{D_{\infty}}(\sigma)\ll\log Q$) follows from our hypothesis on the Dirichlet coefficients of $-\frac{D'}{D}(s)$ (resp.\ Stirling's formula).
\end{proof}

\subsection{Rankin--Selberg combinatorics}
\label{sec:prelim_large_sieve}

A partition $\mu=(\mu_i)_{i=1}^{\infty}$ is a sequence of nonincreasing nonnegative integers $\mu_1\geq\mu_2\geq\cdots$ with only finitely many nonzero entries.  For a partition $\mu$, let $\ell(\mu)$ be the number of nonzero $\mu_i$, and let $|\mu|=\sum_{i=1}^{\infty} \mu_i$.  For a set $\{\alpha_{1},\ldots,\alpha_n \}$ of real numbers and a partition $\mu$ with $\ell(\mu)\leq n$, let $s_{\mu}(\{\alpha_1,\ldots,\alpha_n\})$ be the Schur polynomial $\det[(\alpha_{i}^{\lambda(j)+n-j})_{ij}] / \det[(\alpha_{i}^{n-j})_{ij}]$ associated to $\mu$.  If $|\mu|=0$, then $s_{\mu}(\{\alpha_1,\ldots,\alpha_n\})$ is identically one.  By convention, if $\ell(\mu)>n$, then $s_{\mu}(\{\alpha_1,\ldots,\alpha_n\})$ is identically zero.

Let $\pi\in\mathfrak{F}_n$ and $\pi'\in\mathfrak{F}_{n'}$.  By  \eqref{eqn:RS_Dirichlet_series}, \eqref{eqn:separate_dirichlet_coeffs}, and Cauchy's identity \cite[Theorem 38.1]{Bump_lie}, we have
\[
\sum_{k=0}^{\infty}\frac{\lambda_{\pi\times\pi'}(\kp^k)}{\N\kp^{ks}}=L(s,\pi_{\kp}\times\pi_{\kp}')=\sum_{\mu}\frac{s_{\mu}(A_{\pi}(\kp))s_{\mu}(A_{\pi'}(\kp))}{\N\kp^{s|\mu|}},\quad \kp\nmid\kq_{\pi}\kq_{\pi'},
\]
where the sum ranges over all partitions.  This yields
\[
\lambda_{\pi\times\pi'}(\kp^k) = \sum_{ |\mu|=k}s_{\mu}(A_{\pi}(\kp))s_{\mu}(A_{\pi'}(\kp)),\qquad \kp\nmid\kq_{\pi}\kq_{\pi'}.
\]
For an integral ideal $\kn$ with factorization $\kn=\prod_{\kp}\kp^{\mathrm{ord}_{\kp}(\kn)}$ (with $\mathrm{ord}_{\kp}(\kn)=0$ for all but finitely many $\kp$), the multiplicativity of $\lambda_{\pi\times\pi'}(\kn)$ tells us that if $\gcd(\kn,\kq_{\pi}\kq_{\pi'})=\cO_F$, then
\begin{align}
\label{n-prime2S}
\lambda_{\pi\times\pi'}(\kn)=\prod_{\kp}\lambda_{\pi\times\pi'}(\kp^{\mathrm{ord}_{\kp}(\kn)})=\sum_{(\mu_{\kp})_{\kp}\in\underline{\mu}[\kn]}\prod_{\kp}s_{\mu_{\kp}}(A_{\pi}(\kp))s_{\mu_{\kp}}(A_{\pi'}(\kp)),
\end{align}
where $(\mu_{\kp})_{\kp}$ denotes a sequence of partitions indexed by prime ideals and
\begin{equation}
\label{eqn:underline_mu_def}
\underline{\mu}[\kn]\coloneqq \{(\mu_{\kp})_{\kp}\colon |\mu_{\kp}|=\mathrm{ord}_{\kp}(\kn)\textup{ for all $\kp$}\}.
\end{equation}

Define the numbers $\mu_{\pi\times\pi'}(\kn)$ on unramified prime powers by
\[
\sum_{k=0}^{\infty}\frac{\mu_{\pi\times\pi'}(\kp^k)}{\N\kp^{ks}}=L(s,\pi_{\kp}\times\pi_{\kp}')^{-1}=\prod_{j=1}^{n} \prod_{j'=1}^{n'}(1-\alpha_{j,\pi}(\kp)\alpha_{j',\pi'}(\kp)\N\kp^{-s}),\qquad \kp\nmid\kq_{\pi}\kq_{\pi'}.
\]
By multiplicativity, this defines $\mu_{\pi\times\pi'}(\kn)$ when $\gcd(\kn,\kq_{\pi}\kq_{\pi'})=\mathcal{O}_F $.  For a partition $\mu=(\mu_i)_{i=1}^{\infty}$, let $\mu^*=(\mu_i^*)_{i=1}^{\infty}$ be the dual partition defined by $\mu_i^*=|\{j\colon \mu_j\geq i\}|$.  It follows from the dual Cauchy identity \cite[Chapter 38]{Bump_lie} and \eqref{eqn:separate_dirichlet_coeffs} that
\[
\sum_{k=0}^{\infty}\frac{\mu_{\pi\times\pi'}(\kp^k)}{\N\kp^{ks}}=\sum_{\mu}\frac{s_{\mu}(A_{\pi}(\kp))s_{\mu^*}(-A_{\pi'}(\kp))}{\N\kp^{|\mu|s}},\qquad \kp\nmid\kq_{\pi}\kq_{\pi'},
\]
where $-A_{\pi'}(\kp)=\{-\alpha_{1,\pi'}(\kp),\ldots,-\alpha_{n,\pi'}(\kp)\}$.  Hence we have
\begin{equation}
\label{eqn:moebius}
\mu_{\pi\times\pi'}(\kn)=\sum_{(\mu_{\kp})_{\kp}\in\underline{\mu}[\kn]}\prod_{\kp}s_{\mu_{\kp}}(A_{\pi}(\kp))s_{\mu^*_{\kp}}(-A_{\pi}(\kp)),\qquad \gcd(\kn,\kq_{\pi}\kq_{\pi'})=\mathcal{O}_F .
\end{equation}

\begin{lemma}
	\label{lem:dual_partition}
	If $\gcd(\kn,\kq_{\pi}\kq_{\pi'})=\mathcal{O}_F $, then we have $|\mu_{\pi\times\pi'}(\kn)|\leq\frac{1}{2}(\lambda_{\pi\times\tilde{\pi}}(\kn)+\lambda_{\pi'\times\tilde{\pi}'}(\kn))$.  
\end{lemma}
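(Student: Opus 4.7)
The plan is to combine the explicit formula \eqref{eqn:moebius} with an elementary AM--GM inequality and a combinatorial involution on partitions. First I would observe that Schur polynomials are homogeneous of degree $|\mu|$, so $s_{\mu_{\kp}^*}(-A_{\pi'}(\kp)) = (-1)^{|\mu_{\kp}^*|} s_{\mu_{\kp}^*}(A_{\pi'}(\kp))$. Since $|\mu_{\kp}^*|=|\mu_{\kp}|$, taking absolute values inside \eqref{eqn:moebius} and applying the triangle inequality yields
\[
|\mu_{\pi\times\pi'}(\kn)|\leq \sum_{(\mu_{\kp})_{\kp}\in\underline{\mu}[\kn]}\Big(\prod_{\kp}|s_{\mu_{\kp}}(A_{\pi}(\kp))|\Big)\Big(\prod_{\kp}|s_{\mu_{\kp}^*}(A_{\pi'}(\kp))|\Big).
\]

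Next I would apply the inequality $ab\leq \tfrac{1}{2}(a^2+b^2)$ to each summand, with $a=\prod_{\kp}|s_{\mu_{\kp}}(A_{\pi}(\kp))|$ and $b=\prod_{\kp}|s_{\mu_{\kp}^*}(A_{\pi'}(\kp))|$. This gives
\[
|\mu_{\pi\times\pi'}(\kn)|\leq \frac{1}{2}\sum_{(\mu_{\kp})_{\kp}\in\underline{\mu}[\kn]}\prod_{\kp}|s_{\mu_{\kp}}(A_{\pi}(\kp))|^2 + \frac{1}{2}\sum_{(\mu_{\kp})_{\kp}\in\underline{\mu}[\kn]}\prod_{\kp}|s_{\mu_{\kp}^*}(A_{\pi'}(\kp))|^2.
\]

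To identify these two sums, I would use \eqref{n-prime2S} applied to $\pi\times\tilde{\pi}$ and $\pi'\times\tilde{\pi}'$, together with the fact that Schur polynomials have real coefficients, so $s_{\mu}(A_{\tilde{\pi}}(\kp))=s_{\mu}(\overline{A_{\pi}(\kp)})=\overline{s_{\mu}(A_{\pi}(\kp))}$. Hence $\lambda_{\pi\times\tilde{\pi}}(\kn)=\sum_{(\mu_{\kp})\in\underline{\mu}[\kn]}\prod_{\kp}|s_{\mu_{\kp}}(A_{\pi}(\kp))|^2$ and likewise for $\pi'\times\tilde{\pi}'$. This immediately handles the first sum. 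For the second sum, I would exploit that $\mu\mapsto \mu^*$ is an involution on partitions preserving $|\mu|$; consequently, the map $(\mu_{\kp})_{\kp}\mapsto (\mu_{\kp}^*)_{\kp}$ is a bijection of $\underline{\mu}[\kn]$ onto itself (recall \eqref{eqn:underline_mu_def}). Reindexing by $\nu_{\kp}=\mu_{\kp}^*$ identifies the second sum with $\lambda_{\pi'\times\tilde{\pi}'}(\kn)$, completing the proof.

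The proof is essentially a one-step combinatorial manipulation, so there is no real obstacle; the only subtle point is recognizing that the involution $\mu\mapsto \mu^*$ converts the ``dual Cauchy'' sum defining $\mu_{\pi\times\pi'}(\kn)$ into one that pairs naturally with the ``Cauchy'' sum defining $\lambda_{\pi'\times\tilde{\pi}'}(\kn)$, after which AM--GM does the rest.
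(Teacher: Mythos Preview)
Your proof is correct and follows essentially the same approach as the paper: triangle inequality on \eqref{eqn:moebius}, AM--GM termwise, the identification of the first sum with $\lambda_{\pi\times\tilde{\pi}}(\kn)$ via \eqref{n-prime2S}, and the involution $\mu\mapsto\mu^*$ on $\underline{\mu}[\kn]$ to identify the second sum with $\lambda_{\pi'\times\tilde{\pi}'}(\kn)$. The only cosmetic difference is that you dispose of the sign via homogeneity at the outset, whereas the paper keeps $-A_{\pi'}(\kp)$ throughout and removes it at the end using $\alpha_{j,\pi'}(\kp)\overline{\alpha_{j',\pi'}(\kp)}=(-\alpha_{j,\pi'}(\kp))\overline{(-\alpha_{j',\pi'}(\kp))}$.
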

\begin{proof}
	We apply the inequality of arithmetic and geometric means to \eqref{eqn:moebius}:
	\[
	|\mu_{\pi\times\pi'}(\kn)|\leq \frac{1}{2}\Big(\sum_{(\mu_{\kp})_{\kp}\in\underline{\mu}[\kn]}\Big|\prod_{\kp}s_{\mu_{\kp}}(A_{\pi}(\kp))\Big|^2+\sum_{(\mu_{\kp})_{\kp}\in\underline{\mu}[\kn]}\Big|\prod_{\kp}s_{\mu^*_{\kp}}(-A_{\pi'}(\kp))\Big|^2\Big).
	\]
	The first sum equals $\lambda_{\pi\times\tilde{\pi}}(\kn)$ by \eqref{n-prime2S}.  For the second sum, note that since $|\mu|=|\mu^*|$, we have $(\mu_{\kp})_{\kp}\in\underline{\mu}[\kn]$ if and only if $(\mu_{\kp}^*)_{\kp}\in\underline{\mu}[\kn]$.  Hence by rearranging, we see that
\begin{equation}
\label{eqn:dual_a}
\sum_{(\mu_{\kp})_{\kp}\in\underline{\mu}[\kn]}\Big|\prod_{\kp}s_{\mu^*_{\kp}}(-A_{\pi'}(\kp))\Big|^2=\sum_{(\mu_{\kp})_{\kp}\in\underline{\mu}[\kn]}\Big|\prod_{\kp}s_{\mu_{\kp}}(-A_{\pi'}(\kp))\Big|^2=\lambda_{\pi\times\tilde{\pi}'}(\kn),
\end{equation}
where the last equality holds because $\alpha_{j,\pi'}(\kp)\overline{\alpha_{j',\pi'}(\kp)}=(-\alpha_{j,\pi'}(\kp))\overline{(-\alpha_{j',\pi'}(\kp))}$.
\end{proof}

\section{A new mean value estimate}
\label{sec:large_sieve}

Our proof of \cref{thm:ZDE} uses the following new mean value estimate for the Dirichlet coefficients of $L(s,\pi\times\pi')$ and $L(s,\pi\times\pi')^{-1}$.  Let $\mathcal{S}\subseteq\mathfrak{F}_n$, and let $\mathcal{S}(Q)=\{\pi\in\mathcal{S}\colon C(\pi)\leq Q\}$.
\begin{theorem}
\label{thm:main_theorem_1}
	Let $b$ be a complex-valued function supported on the integral ideals of $\cO_F$.  Let $n,n'\geq 1$, and let $\pi'\in\mathfrak{F}_{n'}$.  Let $Q,T\geq 1$, $\epsilon>0$, and $x\geq 1$.  Both
	\[
	\sum_{\pi\in\mathcal{S}(Q)}\Big|\sum_{\substack{\N\kn\in(x,xe^{1/T}] \\ \gcd(\kn,\kq_{\pi}\kq_{\pi'})=\mathcal{O}_F }}\mu_{\pi\times\pi'}(\kn)b(\kn)\Big|^2\qquad\text{and}\qquad\sum_{\pi\in\mathcal{S}(Q)}\Big|\sum_{\substack{\N\kn\in(x,xe^{1/T}] \\ \gcd(\kn,\kq_{\pi}\kq_{\pi'})=\mathcal{O}_F }}\lambda_{\pi\times\pi'}(\kn)b(\kn)\Big|^2
	\]
	are
	\[
	\ll_{\epsilon} Q^{\epsilon}\Big(\frac{x}{T}+Q^{4n^2\theta_n+n}T^{\frac{1}{2}n^2 [F:\Q]+\epsilon}|\mathcal{S}(Q)|\Big)\sum_{\substack{\N\kn\in(x,xe^{1/T}] \\ \gcd(\kn,\kq_{\pi'})=\cO_F}}\lambda_{\pi'\times\tilde{\pi}'}(\kn)|b(\kn)|^2,
	\]
	where $\theta_n\in[0,\frac{1}{2}-\frac{1}{n^2+1}]$ is the best known exponent towards GRC for $\pi\in\mathcal{S}$.
\end{theorem}

\begin{remark}
If $\pi'=\mathbbm{1}$ and $\mathcal{S}=\mathfrak{F}_n$, then \cref{thm:main_theorem_1} recovers \cite[Theorem 1.1]{TZ_GLn}\footnote{The factor $Q^{4n^2\theta_n+n}$ fixes a minor error in the proof of  \cite[Theorem 1.1]{TZ_GLn}, which led to a factor of $Q^{n^2+n}$ instead.}:
	\[
	\sum_{\pi\in\mathfrak{F}_n(Q)}\Big|\sum_{\substack{\N\kn\in(x,xe^{1/T}] \\ \gcd(\kn,\kq_{\pi})=\mathcal{O}_F }}\lambda_{\pi}(\kn)b(\kn)\Big|^2\ll_{\epsilon} Q^{\epsilon}\Big(\frac{x}{T}+Q^{4n^2\theta_n+n}T^{\frac{1}{2}n^2 [F:\Q]+\epsilon}|\mathfrak{F}_n(Q)|\Big)\sum_{\substack{\N\kn\in(x,xe^{1/T}] \\ \gcd(\kn,\kq_{\pi'})=\cO_F}}|b(\kn)|^2.
	\]
One could try to prove \cref{thm:main_theorem_1} starting with this, replacing $b(\kn)$ with $\lambda_{\pi'}(\kn)\mu_F(\kn)b(\kn)$ (where $\mu_F(\kn)$ is the $\kn$-th Dirichlet coefficient of $\zeta_F(s)^{-1}$), and try to recover a version of \cref{thm:main_theorem_1} with $\mu_{\pi\times\pi'}(\kn)$ replaced by $\lambda_{\pi}(\kn)\lambda_{\pi'}(\kn)\mu_F(\kn)$.  Note that $\mu_{\pi\times\pi'}(\kn)=\lambda_{\pi}(\kn)\lambda_{\pi'}(\kn)\mu_F(\kn)$ when $\kn$ is squarefree and coprime to $\kq_{\pi}\kq_{\pi'}$.  Otherwise, equality is not guaranteed.  If one wants to approximate $L(s,\pi\times\pi')$ with $\sum_{\textup{$\kn$ squarefree}}\lambda_{\pi}(\kn)\lambda_{\pi'}(\kn)\N\kn^{-s}$ and extend into the critical strip when $\pi,\pi'\in\mathfrak{F}_n$ and $n\geq 5$, then one must have progress towards GRC well beyond what is known unconditionally \cite{Brumley_2,DK}.  Such progress would then be a hypothesis for \cref{thm:ZDE}.
\end{remark}

\cref{thm:main_theorem_1} provides the first nontrivial unconditional mean value estimates of large sieve type for the Dirichlet coefficients $\lambda_{\pi\times\pi'}(\kn)$ or $\mu_{\pi\times\pi'}(\kn)$ for arbitrary $n$ and $n'$.  \cref{thm:main_theorem_1} follows from a more general result, \cref{prop:pre_large_sieve} below, for sequences of products of Schur polynomials evaluated on the set $A_{\pi}(\kp)$ of Satake parameters of $\pi$ at $\kp$.  We begin with a mean value estimate for the Satake parameters of $\pi$ as $\pi\in\mathcal{S}$ varies.  Let
\begin{equation}
\label{eqn:coeffics}
a: \bigcup_{x<\N\kn\leq xe^{1/T}}\underline{\mu}[\kn]\to\mathbb{C},\qquad \alpha:\mathcal{S}(Q)\to\mathbb{C}
\end{equation}
be functions that are not identically zero.  Their $\ell^2$ norms $\|a\|_2$ and $\|\alpha\|_2$ are defined by 
\[
\|a\|_2 = \Big(\sum_{x<\N\kn\leq xe^{1/T}}~\sum_{(\mu_{\kp})_{\kp}\in\underline{\mu}[\kn]}|a((\mu_{\kp})_{\kp})|^2\Big)^{1/2},\qquad \|\alpha\|_2=\Big(\sum_{\pi\in\mathcal{S}(Q)}|\alpha(\pi)|^2\Big)^{1/2}.
\]
For convenience, we define $\mathbf{1}_{(\kn,\kq)}$ to equal one when $\gcd(\kn,\kq)=\mathcal{O}_F$ and zero otherwise.
\begin{proposition}
	\label{prop:pre_large_sieve}
Let $x\geq 1$ and $Q,T\geq 1$.  Define
\[
C(Q,T,x)\coloneqq \sup_{\|a\|_2\neq 0}\frac{1}{\|a\|_2^2}\sum_{\pi\in\mathcal{S}(Q)}\Big|\sum_{\substack{x<\N\kn\leq xe^{1/T} \\ \gcd(\kn,\kq_{\pi})=\mathcal{O}_F }}\sum_{(\mu_{\kp})_{\kp}\in\underline{\mu}[\kn]}\Big[\prod_{\kp}s_{\mu_{\kp}}(A_{\pi}(\kp))\Big]a((\mu_{\kp})_{\kp})\Big|^2.
\]
We have the bound $C(Q,T,x)\ll_{\epsilon} Q^{\epsilon}(T^{-1}x+Q^{4n^2\theta_n+n} T^{\frac{n^2[F:\Q]}{2}+\epsilon}|\mathcal{S}(Q)|)$.
\end{proposition}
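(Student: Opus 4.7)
My plan is to prove \cref{prop:pre_large_sieve} via large-sieve duality, Cauchy's identity for Schur polynomials, and contour integration of Rankin--Selberg $L$-functions. By the self-adjointness of the squared operator norm, $C(Q,T,x)$ also equals the norm of the dual map from $\ell^2(\mathfrak{F}_n(Q))$ to the pair-space, so it suffices to bound
\[
\Sigma := \sum_{x < \N\kn \leq xe^{1/T}} \sum_{(\mu_\kp)_\kp \in \underline{\mu}[\kn]} \Big|\sum_{\pi \in \mathfrak{F}_n(Q)} \alpha(\pi) \prod_\kp s_{\mu_\kp}(A_\pi(\kp)) \mathbf{1}_{(\kn,\kq_\pi)}\Big|^2
\]
by the stated quantity times $\|\alpha\|_2^2$ for every $\alpha \colon \mathfrak{F}_n(Q) \to \mathbb{C}$. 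Expanding the square, exchanging summations, and invoking the identity $\overline{s_\mu(A_{\pi_2}(\kp))} = s_\mu(A_{\widetilde{\pi}_2}(\kp))$ (Schur polynomials have real coefficients and $A_{\widetilde{\pi}_2}(\kp) = \overline{A_{\pi_2}(\kp)}$), the inner partition sum collapses via \eqref{n-prime2S} applied to the pair $(\pi_1, \widetilde{\pi}_2)$, giving
\[
\Sigma = \sum_{\pi_1,\pi_2 \in \mathfrak{F}_n(Q)} \alpha(\pi_1) \overline{\alpha(\pi_2)}\, S(\pi_1,\pi_2), \qquad S(\pi_1,\pi_2) := \sum_{\substack{x < \N\kn \leq xe^{1/T} \\ \gcd(\kn,\kq_{\pi_1}\kq_{\pi_2}) = \cO_F}} \lambda_{\pi_1 \times \widetilde{\pi}_2}(\kn).
\]

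The heart of the proof is to estimate $S(\pi_1,\pi_2)$. I would choose a smooth nonnegative majorant $\phi$ of $\mathbf{1}_{(x,xe^{1/T}]}$ with $\|\phi^{(k)}\|_\infty \ll_k (T/x)^k$, write
\[
\sum_\kn \lambda_{\pi_1 \times \widetilde{\pi}_2}(\kn)\phi(\N\kn) = \frac{1}{2\pi i}\int_{(c)} L(s,\pi_1\times\widetilde{\pi}_2)\widetilde{\phi}(s)\, ds
\]
for some $c > 1$, and shift the contour past the potential simple pole at $s = 1$. The residue $\kappa_{\pi\times\widetilde{\pi}}\widetilde{\phi}(1)$ occurs only when $\pi_1 = \pi_2$, and by \eqref{eqn:kappa_bound} it contributes $\ll_\epsilon Q^\epsilon x/T$, the expected main term. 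On $\re(s) = \tfrac{1}{2}$, the convexity estimate \eqref{eqn:preconvex} yields $|L(\tfrac{1}{2}+it,\pi_1\times\widetilde{\pi}_2)| \ll Q^{n/2+\epsilon}(3+|t|)^{n^2[F:\Q]/4+\epsilon}$, while the Mellin transform satisfies $|\widetilde{\phi}(\tfrac{1}{2}+it)| \ll_k (x^{1/2}/T)(1+|t|/T)^{-k}$, effectively truncating the integral to $|t| \ll T^{1+\epsilon}$. The coprimality with $\kq_{\pi_1}\kq_{\pi_2}$ is imposed via a standard M\"obius inversion, producing harmless bounded local factors $L_\kp(s,\pi_1\times\widetilde{\pi}_2)^{-1}$.

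Finally, the diagonal $\pi_1 = \pi_2$ contributes $\ll Q^\epsilon(x/T)\|\alpha\|_2^2$, producing the first term in the proposition. For the off-diagonal sum, Cauchy--Schwarz on the $\alpha$'s gives $\sum_{\pi_1,\pi_2}|\alpha(\pi_1)\alpha(\pi_2)| \leq |\mathfrak{F}_n(Q)|\|\alpha\|_2^2$, which combined with the bound on $|S(\pi_1,\pi_2)|$ yields the off-diagonal second term. The main obstacle is obtaining the stated exponents $Q^{n^2+n}$ and $T^{n^2[F:\Q]+\epsilon}$ uniformly in $x$: the direct critical-line argument above gives only $|S(\pi_1,\pi_2)| \ll Q^{n/2+\epsilon}x^{1/2}T^{n^2[F:\Q]/4+\epsilon}$, carrying an unwanted $x^{1/2}$. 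Removing this and producing an $x$-independent second term requires interpolating the refined critical-line estimate with a cruder trivial bound in a two-regime analysis—exploiting the fact that the off-diagonal contribution is only needed when $x$ is small relative to $Q^{n^2+n}T^{n^2[F:\Q]}|\mathfrak{F}_n(Q)|$, since the diagonal $x/T$ already dominates for larger $x$—so that the combined estimate achieves the advertised uniform exponents in $Q$ and $T$.
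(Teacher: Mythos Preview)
Your overall architecture matches the paper exactly: dualize to the $\alpha$-side, expand the square, collapse the partition sum to $\lambda_{\pi_1\times\widetilde{\pi}_2}(\kn)$ via Cauchy's identity, majorize by a smooth cutoff, and evaluate by contour shifting the Rankin--Selberg $L$-function past $s=1$. The diagonal residue term is handled correctly.

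The gap is in your treatment of the shifted integral. You stop at $\re(s)=\tfrac{1}{2}$, which leaves an $x^{1/2}$ in the off-diagonal estimate, and you then appeal to an unspecified ``two-regime interpolation'' to remove it. That last step is not an argument, and a direct AM--GM balancing of your $x^{1/2}$-term against $x/T$ does not uniformly reproduce the stated exponents (it introduces an extra factor of $|\mathfrak{F}_n(Q)|$ and fails altogether for small $n$ and $[F:\Q]$). So as written, the proof does not close.

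The fix is much simpler than a two-regime analysis: push the contour further, to $\re(s)=1/\log x$. On that line $|x^s|=x^{1/\log x}=e$ is bounded \emph{independently of $x$}, so the off-diagonal integral is $x$-free from the outset. The convexity bound \eqref{eqn:preconvex} is still available there (with exponent $\tfrac{1}{2}\max\{1-\sigma,0\}+\epsilon$), and the compactly supported smoothing $\phi(T\log(\N\kn/x))$ makes $\widehat{\phi}(s/T)$ entire with rapid decay in $|\im(s)|/T$, so no poles obstruct the shift and the $t$-integral truncates to $|t|\ll T$. This is exactly how the paper proceeds, and it yields the $x$-independent second term directly without any case splitting.
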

\begin{proof}
We observe that
\begin{equation}
\label{eqn:step1}
\begin{aligned}
C(Q,T,x)=\sup_{\|a\|_2=1}\sum_{\pi\in\mathcal{S}(Q)}\Big|\sum_{\substack{x<\N\kn\leq xe^{1/T}}}~\sum_{(\mu_{\kp})_{\kp}\in\underline{\mu}[\kn]}\Big[\prod_{\kp}s_{\mu_{\kp}}(A_{\pi}(\kp))\Big]\mathbf{1}_{(\kn,\kq_{\pi})}a((\mu_{\kp})_{\kp})\Big|^2.
\end{aligned}
\end{equation}
By the duality principle for bilinear forms, \eqref{eqn:step1} is bounded by the supremum over the functions $\alpha\colon \mathcal{S}(Q)\to\mathbb{C}$ such that $\|\alpha\|_2=1$ of
\begin{equation}
\label{eqn:step2}
\sum_{x<\N\kn\leq xe^{1/T}}~\sum_{(\mu_{\kp})_{\kp}\in\underline{\mu}[\kn]}\Big|\sum_{\pi\in\mathcal{S}(Q)}\Big[\prod_{\kp}s_{\mu_{\kp}}(A_{\pi}(\kp))\Big]\mathbf{1}_{(\kn,\kq_{\pi})}\alpha(\pi)\Big|^2.
\end{equation}
Let $\phi$ be a fixed smooth test function, supported in a compact subset of $[-2,2]$, such that $\phi(t)=1$ for $t\in[0,1]$ and $\phi(t)\in[0,1)$ otherwise.  Then \eqref{eqn:step2} is at most
\begin{equation}
\label{eqn:step3}
\sum_{\kn}\sum_{(\mu_{\kp})_{\kp}\in\underline{\mu}[\kn]}\Big|\sum_{\pi\in\mathcal{S}(Q)}\Big[\prod_{\kp}s_{\mu_{\kp}}(A_{\pi}(\kp))\Big]\mathbf{1}_{(\kn,\kq_{\pi})}\alpha(\pi)\Big|^2\phi\Big(T\log\frac{\N\kn}{x}\Big).
\end{equation}
We expand the square, interchange the order of summation, and apply \eqref{n-prime2S} to find that \eqref{eqn:step3} equals
\begin{equation}
\label{eqn:step4}
\begin{aligned}
&\sum_{\pi,\pi'\in\mathcal{S}(Q)}\alpha(\pi)\bar{\alpha(\pi')}\sum_{\kn}\sum_{(\mu_{\kp})_{\kp}\in\underline{\mu}[\kn]}\Big[\prod_{\kp}s_{\mu_{\kp}}(A_{\pi}(\kp))\Big]\overline{\Big[\prod_{\kp}s_{\mu_{\kp}}(A_{\pi'}(\kp))\Big]}\mathbf{1}_{(\kn,\kq_{\pi})}\mathbf{1}_{(\kn,\kq_{\pi'})}\phi\Big(T\log\frac{\N\kn}{x}\Big)\\
&=\sum_{\pi,\pi'\in\mathcal{S}(Q)}\alpha(\pi)\bar{\alpha(\pi')}\sum_{\gcd(\kn,\kq_{\pi}\kq_{\pi'})=\mathcal{O}_F }\lambda_{\pi\times\widetilde{\pi}'}(\kn)\phi\Big(T\log\frac{\N\kn}{x}\Big).
\end{aligned}
\end{equation}

Let $\kappa_{\pi\times\pi'}=\mathop{\mathrm{Res}}_{s=1}L(s,\pi\times\pi')\prod_{\kp \mid \kq_{\pi}\kq_{\pi'}}L(s,\pi_{\kp}\times\pi_{\kp}')^{-1}$. Note that $\kappa_{\pi\times\pi'}\geq 0$, with equality if and only if $\pi'\neq\tilde{\pi}$.  Since $|\alpha_{j,j',\pi\times\pi'}(\kp)|\leq \N\kp$, we have the bound
\[
\prod_{\kp \mid \kq_{\pi}\kq_{\pi'}}|L(1,\pi_{\kp}\times\pi_{\kp}')|^{-1}=\prod_{\kp \mid \kq_{\pi}\kq_{\pi'}}\prod_{j=1}^n \prod_{j'=1}^{n'}\Big|1-\frac{\alpha_{j,j',\pi\times\pi'}(\kp)}{\N\kp}\Big|\leq \prod_{\kp \mid\kq_{\pi}\kq_{\pi'}}2^{n'n}.
\]
Since $|\{\kp\colon \kp \mid \kn\}|\ll (\log\N\kn)/\log\log\N\kn$ \cite[Lemma 1.13b]{Weiss}, it follows from \eqref{eqn:Li_sharper} that
\begin{equation}
	\label{eqn:kappa_bound}
	\kappa_{\pi\times\tilde{\pi}}\ll_{\epsilon} C(\pi)^{\epsilon}.
\end{equation}

Let $\hat{\phi}(s)=\int_{\R}\phi(y)e^{sy}dy$.  It follows from a standard contour integral calculation using \eqref{eqn:LRS_2} and \eqref{eqn:preconvex} that \eqref{eqn:step4} equals
\begin{equation}
\label{eqn:cahrs}
\begin{aligned}
&\sum_{\pi,\pi'\in\mathcal{S}(Q)}\alpha(\pi)\bar{\alpha(\pi')}\Big(\frac{1}{2\pi iT}\int_{3-i\infty}^{3+i\infty}\frac{L(s,\pi\times\tilde{\pi}')}{\prod_{\kp \mid \kq_{\pi}\kq_{\pi'}}L(s,\pi_{\kp}\times\tilde{\pi}_{\kp}')}x^s\widehat{\phi}(s/T)ds\Big)\\
&=\sum_{\pi,\pi'\in\mathcal{S}(Q)}\alpha(\pi)\bar{\alpha(\pi')}\Big(\kappa_{\pi\times\tilde{\pi}'}x\frac{\widehat{\phi}(\frac{1}{T})}{T}+\frac{1}{2\pi iT}\int_{\frac{1}{4\log(ex)}-i\infty}^{\frac{1}{4\log(ex)}+i\infty}\frac{L(s,\pi\times\tilde{\pi}')x^s\widehat{\phi}(s/T)}{\prod_{\kp \mid \kq_{\pi}\kq_{\pi'}}L(s,\pi_{\kp}\times\tilde{\pi}_{\kp}')}ds\Big)\\
&=\sum_{\pi,\pi'\in\mathcal{S}(Q)}\alpha(\pi)\bar{\alpha(\pi')}\Big(\kappa_{\pi\times\tilde{\pi}'}x\frac{\widehat{\phi}(\frac{1}{T})}{T}+O_{\phi,\epsilon}(Q^{4n^2\theta_n+n+\epsilon}T^{\frac{1}{2}n^2 [F:\Q]+\epsilon})\Big).
\end{aligned}
\end{equation}
Recall that $\kappa_{\pi\times\tilde{\pi}'}>0$ when $\pi=\pi'$, and $\kappa_{\pi\times\tilde{\pi}'}=0$ otherwise.  Since $\|\alpha\|_2=1$ and $\phi$ is fixed, it follows from the inequality of arithmetic and geometric means that \eqref{eqn:cahrs} equals
\begin{multline*}
\frac{\widehat{\phi}(\frac{1}{T})}{T}x\sum_{\pi\in\mathcal{S}(Q)}|\alpha(\pi)|^2\kappa_{\pi\times\tilde{\pi}'}+O_{\phi}(Q^{4n^2\theta_n+n+\epsilon}T^{\frac{1}{2}n^2 [F:\Q]+\epsilon}|\mathcal{S}(Q)|)\\
\ll_{\phi,\epsilon}\frac{x}{T}\max_{\pi\in\mathcal{S}(Q)}\kappa_{\pi\times\tilde{\pi}'}+Q^{4n^2\theta_n+n+\epsilon}T^{\frac{1}{2}n^2 [F:\Q]+\epsilon}|\mathcal{S}(Q)|.
\end{multline*}
We estimate the maximum using \eqref{eqn:kappa_bound}, and the desired result follows.
\end{proof}

We use \cref{prop:pre_large_sieve} to prove \cref{thm:main_theorem_1}.

\begin{proof}[Proof of \cref{thm:main_theorem_1}]
For the sum involving $\mu_{\pi\times\pi'}(\kn)$, we apply \cref{prop:pre_large_sieve} with
\[
a((\mu_{\kp})_{\kp}) = b\Big(\prod_{\kp}\N\kp^{|\mu_{\kp}|}\Big)\mathbf{1}_{(\prod_{\kp}\N\kp^{|\mu_{\kp}|},\kq_{\pi'})}\prod_{\kp}s_{\mu_{\kp}^*}(-A_{\pi'}(\kp)).
\]
If $(\mu_{\kp})_{\kp}\in\underline{\mu}[\kn]$, then by \eqref{eqn:underline_mu_def}, we have that $|\mu_{\kp}|=\mathrm{ord}_{\kp}(\kn)$ and
\[
a((\mu_{\kp})_{\kp})=b(\kn)\mathbf{1}_{(\kn,\kq_{\pi'})}\prod_{\kp}s_{\mu_{\kp}^*}(-A_{\pi'}(\kp)).
\]
By \eqref{eqn:moebius}, the left-hand side of \cref{prop:pre_large_sieve} becomes
\begin{align*}
	&\sum_{\pi\in\mathcal{S}(Q)}\Big|\sum_{\substack{x<\N\kn\leq xe^{1/T} \\ \gcd(\kn,\kq_{\pi})=\mathcal{O}_F }}~\sum_{(\mu_{\kp})_{\kp}\in\underline{\mu}[\kn]}\Big[\prod_{\kp}s_{\mu_{\kp}}(A_{\pi}(\kp))\Big]a((\mu_{\kp})_{\kp})\Big|^2\\
	&=\sum_{\pi\in\mathcal{S}(Q)}\Big|\sum_{\substack{x<\N\kn\leq xe^{1/T} \\ \gcd(\kn,\kq_{\pi})=\mathcal{O}_F }}b(\kn)\mathbf{1}_{(\kn,\kq_{\pi'})}\sum_{(\mu_{\kp})_{\kp}\in\underline{\mu}[\kn]}\prod_{\kp}s_{\mu_{\kp}}(A_{\pi}(\kp))s_{\mu_{\kp}^*}(-A_{\pi'}(\kp)\Big|^2\\
	&=\sum_{\pi\in\mathcal{S}(Q)}\Big|\sum_{\substack{x<\N\kn\leq xe^{1/T} \\ \gcd(\kn,\kq_{\pi}\kq_{\pi'})=\mathcal{O}_F }}\mu_{\pi\times\pi'}(\kn)b(\kn)\Big|^2.
\end{align*}
Similarly, the right-hand side of \cref{prop:pre_large_sieve} becomes
\begin{align*}
\label{eqn:intermediary}
Q^{\epsilon}\Big(\frac{x}{T}+Q^{4n^2\theta_n+n}T^{\frac{1}{2}n^2 [F:\Q]+\epsilon}|\mathcal{S}(Q)|\Big)\sum_{\substack{\N\kn\in(x,xe^{1/T}] \\ \gcd(\kn,\kq_{\pi'})=\mathcal{O}_F }}|b(\kn)|^2\sum_{(\mu_{\kp})_{\kp}\in\underline{\mu}[\kn]}\Big|\prod_{\kp}s_{\mu_{\kp}^*}(-A_{\pi'}(\kp))\Big|^2.
\end{align*}
The desired result now follows from \eqref{eqn:dual_a}.  Apart from the new choice of
\[
a((\mu_{\kp})_{\kp}) = b\Big(\prod_{\kp}\N\kp^{|\mu_{\kp}|}\Big)\mathbf{1}_{(\prod_{\kp}\N\kp^{|\mu_{\kp}|},\kq_{\pi'})}\prod_{\kp}s_{\mu_{\kp}}(A_{\pi'}(\kp)),
\]
the result for the sum involving $\lambda_{\pi\times\pi'}(\kn)$ is handled in the same manner.
\end{proof}

\begin{corollary}
\label{cor:MVT}
	Let $\pi'\in\mathfrak{F}_{n'}$, $Q,T\geq 1$, and $\epsilon>0$.  If $Y\geq e$ and
	\[
	X\geq Q^{4n^2\theta_n+n+\epsilon}T^{\frac{1}{2}n^2 [F:\Q]+1+\epsilon}|\mathcal{S}(Q)|,\quad \log Y\asymp_{\epsilon} \log X,
	\]
	then
	\begin{align*}
	\sum_{\pi\in\mathcal{S}(Q)}\int_{-T}^{T}\Big|\sum_{\substack{\N\kn > X \\ \gcd(\kn,\kq_{\pi}\kq_{\pi'})=\mathcal{O}_F }}\frac{\mu_{\pi\times\pi'}(\kn)}{\N\kn^{1+\frac{1}{\log Y}+iv}}\Big|^2 dv &\ll_{\epsilon} C(\pi')^{\epsilon}Q^{\epsilon}\log X,\\
	\sum_{\pi\in\mathcal{S}(Q)}\int_{-T}^{T}\Big|\sum_{\substack{\N\kn\leq X \\ \gcd(\kn,\kq_{\pi}\kq_{\pi'})=\mathcal{O}_F }}\frac{\mu_{\pi\times\pi'}(\kn)}{\N\kn^{\frac{1}{2}+iv}}\Big|^2 dv&\ll_{\epsilon} C(\pi')^{\epsilon}Q^{\epsilon}X\log X.
	\end{align*}
\end{corollary}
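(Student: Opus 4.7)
The plan is to reduce the $v$-integration to a short-interval pointwise estimate via Gallagher's mean value inequality for Dirichlet polynomials, then apply \cref{thm:main_theorem_1} in combination with \cref{lem:mertens}. I will sketch the argument for the first bound; the second bound follows mutatis mutandis, with the different exponent $\sigma = \tfrac{1}{2} + 1/\log Y$ in place of $\sigma = 1 + 1/\log Y$ accounting for the extra factor of $X$ on the right-hand side.

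First, I would apply Gallagher's inequality in the form
\[
\int_{-T}^{T}\Big|\sum_{\kn} a_\kn\N\kn^{-iv}\Big|^2 dv \ll T^2\int_0^{\infty}\Big|\sum_{x<\N\kn\leq xe^{1/T}}a_\kn\Big|^2\frac{dx}{x},
\]
taking $a_\kn = \mu_{\pi\times\pi'}(\kn)/\N\kn^{\sigma}$ restricted to $X<\N\kn\leq X^{\log Y}$ with $\gcd(\kn,\kq_\pi\kq_{\pi'})=\cO_F$. Interchanging the sum over $\pi\in\mathfrak{F}_n(Q)$ with the $x$-integration produces an inner quantity to which \cref{thm:main_theorem_1} applies pointwise in $x$, with coefficient $b(\kn)=\N\kn^{-\sigma}$.

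Second, for each $x$ in the effective support range, \cref{thm:main_theorem_1} gives
\[
\sum_{\pi}\Big|\sum_{\substack{x<\N\kn\leq xe^{1/T}\\ \gcd(\kn,\kq_\pi\kq_{\pi'})=\cO_F}}\frac{\mu_{\pi\times\pi'}(\kn)}{\N\kn^{\sigma}}\Big|^2 \ll_{\epsilon} Q^{\epsilon}\Big(\frac{x}{T}+Q^{n^2+n}T^{[F:\Q]n^2+\epsilon}|\mathfrak{F}_n(Q)|\Big)\sum_{x<\N\kn\leq xe^{1/T}}\frac{\lambda_{\pi'\times\tilde{\pi}'}(\kn)}{\N\kn^{2\sigma}}.
\]
The hypothesis $X\geq Q^{n^2+n}T^{[F:\Q]n^2+\epsilon}|\mathfrak{F}_n(Q)|$ is designed exactly so that, in the relevant range $x\geq X$, the off-diagonal term is dominated by $X$, which will integrate against the decay in $\N\kn^{-2\sigma}$ to produce only a lower-order contribution. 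Using $\N\kn>x$ and \cref{lem:mertens}, the remaining Rankin--Selberg coefficient sum is bounded by
\[
\sum_{x<\N\kn\leq xe^{1/T}}\frac{\lambda_{\pi'\times\tilde{\pi}'}(\kn)}{\N\kn^{2\sigma}}\leq x^{1-2\sigma}\sum_{\N\kn\leq xe^{1/T}}\frac{\lambda_{\pi'\times\tilde{\pi}'}(\kn)}{\N\kn}\ll_{\epsilon} C(\pi')^{\epsilon}x^{1-2\sigma}\log x,
\]
uniformly in $\pi'$, which is the crucial step for extracting a clean $\pi'$-uniform bound.

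Third, I would substitute these estimates back into Gallagher's inequality and evaluate the remaining elementary $x$-integrals, using $\log Y\asymp \log X$ to control $x^{-2/\log Y}$ on the integration range. For $\sigma=1+1/\log Y$ the dominant contribution comes from the integral $\int_{X}^{X^{\log Y}}(\log x)x^{-1-2/\log Y}\,dx \ll (\log X)^2$, giving the first bound; for $\sigma=\tfrac{1}{2}+1/\log Y$ the analogous integral evaluates to $\ll X(\log X)^2$, giving the second. The main obstacle is bookkeeping: one must ensure that the off-diagonal term from \cref{thm:main_theorem_1}, once multiplied by the $T^2$ from Gallagher and integrated, remains subordinate to the diagonal contribution, which is guaranteed precisely by the hypothesized lower bound on $X$. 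The cleanness of this combination is what makes \cref{thm:main_theorem_1} so well-suited to driving the zero density estimate of \cref{thm:ZDE}.
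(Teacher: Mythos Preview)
Your overall strategy—Gallagher, then \cref{thm:main_theorem_1}, then \cref{lem:mertens}—matches the paper, but the order in which you execute the last two steps creates a genuine gap. In your step~3 you bound the short-interval sum
\[
\sum_{x<\N\kn\leq xe^{1/T}}\frac{\lambda_{\pi'\times\tilde{\pi}'}(\kn)}{\N\kn^{2\sigma}}
\;\le\; x^{1-2\sigma}\sum_{\N\kn\leq xe^{1/T}}\frac{\lambda_{\pi'\times\tilde{\pi}'}(\kn)}{\N\kn}
\;\ll_{\epsilon}\; C(\pi')^{\epsilon}x^{1-2\sigma}\log x
\]
pointwise in $x$ by extending to the full range. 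This throws away the fact that the interval has logarithmic length $1/T$, and the loss propagates: for $\sigma=1+1/\log Y$, after reinserting the $T^2$ from Gallagher and the $x/T$ from the diagonal of \cref{thm:main_theorem_1}, your ``dominant'' contribution is actually
\[
T^{2}\cdot\frac{1}{T}\int_{X}^{X^{\log Y}} x^{-1-2/\log Y}\log x\,dx \;\asymp\; T(\log X)^{2},
\]
not $(\log X)^2$. The off-diagonal term is worse still, of order $T^{2}C\,(\log X)/X\le T^{2}\log X$. Neither is acceptable, since the corollary claims a bound with no $T$-dependence.

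The paper repairs this by interchanging the $x$-integration with the $\kn$-sum \emph{before} invoking \cref{lem:mertens}. After applying \cref{thm:main_theorem_1} inside Gallagher's integral, one has
\[
T^{2}\int_{0}^{\infty}\Big(\frac{x}{T}+C\Big)\sum_{x<\N\kn\leq xe^{1/T}}\lambda_{\pi'\times\tilde{\pi}'}(\kn)|b(\kn)|^{2}\,\frac{dx}{x},
\]
with $C=Q^{n^{2}+n}T^{[F:\Q]n^{2}+\epsilon}|\mathfrak F_n(Q)|$. Swapping, each fixed $\kn$ contributes only over $x\in(\N\kn e^{-1/T},\N\kn]$, an interval of logarithmic length $1/T$; this exactly cancels the $T^{2}$ out front and yields a single sum
\[
Q^{\epsilon}\sum_{X<\N\kn\leq X^{\log Y}}\frac{\lambda_{\pi'\times\tilde{\pi}'}(\kn)}{\N\kn^{1+2/\log Y}}\Big(1+\frac{C}{\N\kn}\Big)
\;\ll_{\epsilon}\; Q^{\epsilon}\sum_{X<\N\kn\leq X^{\log Y}}\frac{\lambda_{\pi'\times\tilde{\pi}'}(\kn)}{\N\kn},
\]
to which \cref{lem:mertens} applies directly, giving $C(\pi')^{\epsilon}Q^{\epsilon}(\log X)^{2}$. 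The moral: apply \cref{lem:mertens} only \emph{after} the interchange, so that the short-interval saving of $1/T$ is not squandered.
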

\begin{proof}
	We prove the first result; the second result is proved completely analogously.  A formal generalization of \cite[Theorem 1]{Gallagher} to number fields tells us that if $c(\kn)$ is a complex-valued function supported on the integral ideals of $\cO_F$ with $\sum_{\kn}|c(\kn)|<\infty$, then
	\[
	\int_{-T}^{T}\Big|\sum_{\kn}\frac{c(\kn)}{\N\kn^{it}}\Big|^2 dt\ll T^2 \int_0^{\infty}\Big|\sum_{\N\kn\in(x,xe^{1/T}]}c(\kn)\Big|^2\frac{dx}{x}.
	\]
	We choose $b(\kn)=\N\kn^{-1-\frac{1}{\log Y}}$ if $\N\kn>X$ and $b(\kn)=0$ otherwise, which leads to
\[
\sum_{\pi\in\mathcal{S}(Q)}\int_{-T}^{T}\Big|\sum_{\substack{\N\kn > X \\ \gcd(\kn,\kq_{\pi}\kq_{\pi'})=\mathcal{O}_F }}\frac{\mu_{\pi\times\pi'}(\kn)}{\N\kn^{1+\frac{1}{\log Y}+it}}\Big|^2 dt\ll T^2 \int_0^{\infty}\sum_{\pi\in\mathcal{S}(Q)}\Big|\sum_{\N\kn\in(x,xe^{1/T}]}\mu_{\pi\times\pi'}(\kn)b(\kn)\Big|^2\frac{dx}{x}.
\]
	  \cref{thm:main_theorem_1} and the fact that $\lambda_{\pi'\times\tilde{\pi}'}(\kn)\geq 0$ for all $\kn$ imply that the above display is
	\begin{align*}
\ll_{\epsilon}  Q^{\epsilon}\sum_{\substack{\N\kn > X \\ \gcd(\kn,\kq_{\pi'})=\mathcal{O}_F }}\frac{\lambda_{\pi'\times\tilde{\pi}'}(\kn)}{\N\kn^{1+\frac{2}{\log Y}}}\Big(1+\frac{Q^{4n^2\theta_n+n}T^{\frac{1}{2}n^2 [F:\Q]+1+\epsilon}|\mathcal{S}(Q)|}{\N\kn}\Big)\ll_{\epsilon} Q^{\epsilon}\sum_{\N\kn > X}\frac{\lambda_{\pi'\times\tilde{\pi}'}(\kn)}{\N\kn^{1+\frac{2}{\log Y}}}.
	\end{align*}
	The desired result now follows from \cref{lem:mertens} and our choices of $X$ and $Y$.
\end{proof}

\section{Proof of \texorpdfstring{\cref{thm:ZDE}}{Theorem \ref*{thm:ZDE}}}
\label{sec:ZDE}

Let $n,n'\geq 1$, $\epsilon>0$, $T\geq 3$, $Q\geq 3$, $\pi\in\mathcal{S}(Q)$, and $\pi'\in\mathfrak{F}_{n'}$.  Define
\begin{equation}
\label{eqn:Xchoice}
\begin{aligned}
&X\coloneqq C(\pi')^{\epsilon}Q^{4n^2\theta_n+n+\epsilon}T^{\frac{1}{2}n^2 [F:\Q]+1+\epsilon}|\mathcal{S}(Q)|,\\
&Y\coloneqq  \big((C(\pi')^{\frac{n}{2}}Q^{\frac{n'}{2}} (C(\pi')Q)^{2n'n\max\{0,\theta_n+\theta_{n'}-\frac{1}{2}\}} |\mathcal{S}(Q)| T^{\frac{n'n[F:\Q]}{2}+1})^{1+\frac{\epsilon}{4n'n[F:\Q]}}X\big)^{\frac{1}{3-2\sigma}},\\
&L^{\mathrm{ur}}(s,\pi\times\pi')\coloneqq L(s,\pi\times\pi')\prod_{\kp \mid \kq_{\pi}\kq_{\pi'}}L(s,\pi_{\kp}\times\pi_{\kp}')^{-1},\\
&M_X(s,\pi\times\pi')\coloneqq \sum_{\substack{\N\kn\leq X \\ \gcd(\kn,\kq_{\pi}\kq_{\pi'})=\mathcal{O}_F }}\frac{\mu_{\pi\times\pi'}(\kn)}{\N\kn^s},\\
&LM_X(s,\pi\times\pi')\coloneqq L^{\mathrm{ur}}(s,\pi\times\pi') M_X(s,\pi\times\pi').
\end{aligned}
\end{equation}
Note that $\log(C(\pi)C(\pi')T)\ll \log X\asymp_{\epsilon} \log Y$.  We assume that $T\geq 2(\log Y)^2$ since the proof does not change appreciably otherwise.  In this section, $\epsilon$ might vary from line to line, and terms of size $(C(\pi')QT^{[F:\Q]}|\mathcal{S}(Q)|)^{\epsilon}$ and polynomials in $\log(C(\pi')QT^{[F:\Q]}|\mathcal{S}(Q)|)$ might be bounded by $X^{\epsilon}$ without mention.

If $t\in[-T,T]$, then
\[
|\{\rho=\beta+i\gamma\colon \beta\geq 0,~|\gamma-t|\leq 1\}|\ll \log C(\pi\times\pi',t)\ll \log(C(\pi)C(\pi')T)
\]
by \cite[Proposition 5.7]{IK}.  We observe that the rectangle $[\sigma,1]\times[-T,T]$ is covered by $O(T)$ boxes of the form $[\sigma,1]\times[y,y+2(\log Y)^2]$, each containing $O((\log(C(\pi)C(\pi')T))^3)$ zeros.  If we write $n_{\pi\times\pi'}$ for the number of such boxes containing at least one zero $\rho$ of $L(s,\pi\times\pi')$, then
\[
N_{\pi\times\pi'}(\sigma,T)\ll (\log(C(\pi')Q T))^3 n_{\pi\times\pi'}.
\]
If $\pi'=\tilde{\pi}$, then it suffices to assume that $|\rho-1|>1$ since there are $O(\log(C(\pi')QT))$ zeros $\rho$ such that $|\rho-1|\leq 1$.

Let $\rho=\beta+i\gamma$ be a zero of $L(s,\pi\times\pi')$, in which case $LM_X(\rho,\pi\times\pi')=0$.  We compute
\begin{equation}
\begin{aligned}
\label{eqn:zero_detect_1}
e^{-1/Y}&=\frac{1}{2\pi i}\int_{1-\beta+\frac{1}{\log Y}-i\infty}^{1-\beta+\frac{1}{\log Y}+i\infty}(1-LM_X(\rho+w,\pi\times\pi')\Gamma(w)Y^{w}dw\\
&+\frac{1}{2\pi i}\int_{\frac{1}{2}-\beta-i\infty}^{\frac{1}{2}-\beta+i\infty}LM_X(\rho+w,\pi\times\pi')\Gamma(w)Y^{w}dw\\
&+\kappa_{\pi\times\pi'}\Gamma(1-\rho)Y^{1-\rho}\sum_{\substack{\N\kn\leq X \\ \gcd(\kn,\kq_{\pi}\kq_{\pi'})=\cO_F}}\frac{\mu_{\pi\times\pi'}(\kn)}{\N\kn}.
\end{aligned}
\hspace{-.099in}
\end{equation}
It follows from \eqref{eqn:preconvex} and \cref{lem:mertens,lem:dual_partition} that if $\re(w) = 1-\beta+\frac{1}{\log Y}$, then
\begin{equation}
\label{eqn:tail_est}
\begin{aligned}
	&|1-LM_X(\rho+w,\pi\times\pi')|\\
	&= |L^{\mathrm{ur}}(\rho+w,\pi\times\pi')|\cdot|L^{\mathrm{ur}}(\rho+w,\pi\times\pi')^{-1}-M_X(\rho+w,\pi\times\pi')|\\
	&\ll_{\epsilon} C(\pi\times\pi',|\gamma+\im(w)|+1)^{\epsilon}\sum_{\substack{\N\kn>X \\ \gcd(\kn,\kq_{\pi}\kq_{\pi'})=\mathcal{O}_F }}\frac{|\mu_{\pi\times\pi'}(\kn)|}{\N\kn^{1+\frac{1}{\log Y}}}\\
	&\ll_{\epsilon} C(\pi\times\pi',|\gamma+\im(w)|+1)^{\epsilon}\sum_{\substack{\N\kn>X \\ \gcd(\kn,\kq_{\pi}\kq_{\pi'})=\mathcal{O}_F }}\frac{\lambda_{\pi\times\tilde{\pi}}(\kn)+\lambda_{\pi'\times\tilde{\pi}'}(\kn)}{\N\kn^{1+\frac{1}{\log Y}}}\\
	&\ll_{\epsilon} C(\pi)^{\epsilon}C(\pi')^{\epsilon}(|\gamma+\im(w)|+1)^{\epsilon}\log X.
\end{aligned}
\end{equation}
Therefore, we deduce from Stirling's formula that
\begin{multline*}
	\Big|\frac{1}{2\pi i}\int_{1-\beta+\frac{1}{\log Y}-i\infty}^{1-\beta+\frac{1}{\log Y}+i\infty}(1-LM_X(\rho+w,\pi\times\pi'))\Gamma(w)Y^{w}dw\\
	-\frac{1}{2\pi i}\int_{1-\beta+\frac{1}{\log Y}-i(\log Y)^2}^{1-\beta+\frac{1}{\log Y}+i(\log Y)^2}(1-LM_X(\rho+w,\pi\times\pi'))\Gamma(w)Y^{w}dw\Big|\ll \frac{1}{Y}.
\end{multline*}
The other terms in \eqref{eqn:zero_detect_1} are handled similarly.  Since $e^{-1/Y}=1+O(1/Y)$, it follows that
\begin{equation}
\begin{aligned}
\label{eqn:zero_detect_2}
1&\ll \frac{1}{2\pi i}\int_{1-\beta+\frac{1}{\log Y}-i(\log Y)^2}^{1-\beta+\frac{1}{\log Y}+i(\log Y)^2}(1-LM_X(\rho+w,\pi\times\pi'))\Gamma(w)Y^{w}dw\\
&+\frac{1}{2\pi i}\int_{\frac{1}{2}-\beta-i(\log Y)^2}^{\frac{1}{2}-\beta+i(\log Y)^2}LM_X(\rho+w,\pi\times\pi')\Gamma(w)Y^{w}dw+\kappa_{\pi\times\pi'}Y^{1-\sigma}\frac{\log X}{\max\{1,|\gamma|^3\}}.
\end{aligned}
\end{equation}

Write the first integral in \eqref{eqn:zero_detect_2} as $I_1$ and the second as $I_2$.   A simple optimization calculation shows that if $c\geq 1$ and $c^{-1}\leq |I_1|+|I_2|$, then $c^{-1}\leq 2c(|I_1|^2+|I_2|)$.  We estimate $|I_1|^2$ using the Cauchy--Schwarz inequality, thus obtaining the bound
\begin{align*}
1&\ll (\log Y)^2 Y^{2(1-\sigma)}\int_{\gamma-(\log Y)^2}^{\gamma+(\log Y)^2}|1-LM_X(1+\tfrac{1}{\log Y}+iv,\pi\times\pi')|^2 dv\\
&+Y^{\frac{1}{2}-\sigma}\int_{\gamma-(\log Y)^2}^{\gamma+(\log Y)^2}|LM_X(\tfrac{1}{2}+iv,\pi\times\pi')|dv+\kappa_{\pi\times\pi'}Y^{1-\sigma}\frac{\log X}{\max\{1,|\gamma|^3\}}.
\end{align*}
Since $T\geq 2(\log Y)^2$ by hypothesis, we conclude that
\begin{equation}
\label{eqn:zero_detect_3}
\begin{aligned}
n_{\pi\times\pi'}&\ll (\log Y)^2 Y^{2(1-\sigma)}\int_{-T}^{T}|1-L(1+\tfrac{1}{\log Y}+iv)M_X(1+\tfrac{1}{\log Y}+iv)|^2 dv\\
&+Y^{\frac{1}{2}-\sigma}\int_{-T}^{T}|L(\tfrac{1}{2}+iv)M_X(\tfrac{1}{2}+iv)|dv+\kappa_{\pi\times\pi'}Y^{1-\sigma}\log X+1.
\end{aligned}
\end{equation}

Note that $\kappa_{\pi\times\pi'}=0$ unless $\pi'=\tilde{\pi}$, which occurs for at most one $\pi\in\mathcal{S}(Q)$.  When $\kappa_{\pi\times\pi'}\neq 0$, it satisfies $\kappa_{\pi\times\pi'}=\kappa_{\pi\times\tilde{\pi}}\ll_{\epsilon} C(\pi)^{\epsilon}$ per \eqref{eqn:kappa_bound}.  Therefore, summing \eqref{eqn:zero_detect_3} over $\pi\in\mathcal{S}(Q)$, we find that
\begin{align}
\label{eqn:bound_int}
\sum_{\pi\in\mathcal{S}(Q)}N_{\pi\times\pi'}(\sigma,T)&\ll (\log C(\pi)C(\pi')T)^3\sum_{\pi\in\mathcal{S}(Q)}n_{\pi\times\pi'}\notag\\
&\ll_{\epsilon} X^{\epsilon} \Big(Y^{2(1-\sigma)}\int_{-T}^{T}\sum_{\pi\in\mathcal{S}(Q)}|1-LM_X(1+\tfrac{1}{\log Y}+iv,\pi\times\pi')|^2 dv\\
&\qquad\qquad+Y^{\frac{1}{2}-\sigma}\int_{-T}^{T}\sum_{\pi\in\mathcal{S}(Q)}|LM_X(\tfrac{1}{2}+iv,\pi\times\pi')|dv+Y^{1-\sigma}\Big).\notag
\end{align}
For the first integral in \eqref{eqn:bound_int}, we deduce from \eqref{eqn:preconvex} that
\begin{align*}
	&|1-LM_X(1+\tfrac{1}{\log Y}+it,\pi\times\pi')|^2\\
	&=|L^{\mathrm{ur}}(1+\tfrac{1}{\log Y}+it,\pi\times\pi')|^2\Big|\frac{1}{L^{\mathrm{ur}}(1+\tfrac{1}{\log Y}+it,\pi\times\pi')}-M_X(1+\tfrac{1}{\log Y}+it,\pi\times\pi')\Big|^2\\
	&\ll_{\epsilon} X^{\epsilon}\Big|\sum_{\substack{\N\kn>X \\ \gcd(\kn,\kq_{\pi}\kq_{\pi'})=\cO_F}}\frac{\mu_{\pi\times\pi'}(\kn)}{\N\kn^{1+\frac{1}{\log Y}}}\Big|^2.
\end{align*}
For the second integral in \eqref{eqn:bound_int}, it follows from the Cauchy--Schwarz inequality that
\begin{align*}
&\int_{-T}^{T}\sum_{\pi\in\mathcal{S}(Q)}|LM_X(\tfrac{1}{2}+iv,\pi\times\pi')|dv\\
&\leq \Big(\int_{-T}^{T}\sum_{\pi\in\mathcal{S}(Q)}|L^{\mathrm{ur}}(\tfrac{1}{2}+iv,\pi\times\pi')|^2 dv\Big)^{\frac{1}{2}}\Big(\int_{-T}^{T}\sum_{\pi\in\mathcal{S}(Q)}\Big|\sum_{\substack{\N\kn\leq X\\ \gcd(\kn,\kq_{\pi}\kq_{\pi'})=\cO_F}}\frac{\mu_{\pi\times\pi'}(\kn)}{\N\kn^{\frac{1}{2}+iv}}\Big|^2 dv\Big)^{\frac{1}{2}}.
\end{align*}
We bound the second moment of $L^{\mathrm{ur}}(\frac{1}{2}+iv,\pi\times\pi')$ trivially as
\begin{align*}
&\int_{-T}^{T}\sum_{\pi\in\mathcal{S}(Q)}|L^{\mathrm{ur}}(\tfrac{1}{2}+iv,\pi\times\pi')|^2 dv \\
&\ll_{\epsilon} (QC(\pi'))^{2n'n\max\{0,\theta_n+\theta_{n'}-\frac{1}{2}\}+\epsilon} Q^{\frac{n'}{2}}C(\pi')^{\frac{n}{2}}T^{\frac{n'n[F:\Q]}{2}+1+\epsilon}|\mathcal{S}(Q)| \ll_{\epsilon}\frac{Y^{3-2\sigma}}{X}
\end{align*}
using \eqref{eqn:LRS_2} (to bound the ramified Euler factors) and \eqref{eqn:preconvex}.  In summary, we find that
\begin{align*}
\sum_{\pi\in\mathcal{S}(Q)}N_{\pi\times\pi'}(\sigma,T)&\ll_{\epsilon} X^{\epsilon} \Big(Y^{2(1-\sigma)}\int_{-T}^{T}\sum_{\pi\in\mathcal{S}(Q)}\Big|\sum_{\substack{\N\kn>X \\ \gcd(\kn,\kq_{\pi}\kq_{\pi'})=\cO_F}}\frac{\mu_{\pi\times\pi'}(\kn)}{\N\kn^{1+\frac{1}{\log Y}+iv}}\Big|^2 dv\\
&+ Y^{\frac{1}{2}-\sigma}\Big(\frac{Y^{3-2\sigma}}{X}\int_{-T}^{T}\sum_{\pi\in\mathcal{S}(Q)}\Big|\sum_{\substack{\N\kn\leq X\\ \gcd(\kn,\kq_{\pi}\kq_{\pi'})=\cO_F}}\frac{\mu_{\pi\times\pi'}(\kn)}{\N\kn^{\frac{1}{2}+iv}}\Big|^2 dv\Big)^{\frac{1}{2}}+ Y^{1-\sigma}\Big).
\end{align*}
We bound the two $v$-integrals using \cref{cor:MVT} and \eqref{eqn:Xchoice}, thus obtaining
\[
\sum_{\pi\in\mathcal{S}(Q)}N_{\pi\times\pi'}(\sigma,T)\ll_{\epsilon} Y^{2(1-\sigma)}X^{\epsilon}.
\]
\cref{thm:ZDE} now follows from \eqref{eqn:Xchoice}, the bounds for $\theta_{n}$ and $\theta_{n'}$ in  \eqref{eqn:ramanujan_progress}, and \eqref{eqn:poly_upper}.

\section{Bounds for Rankin--Selberg $L$-functions}
\label{sec:good_ZFR}

In this section, we prove \cref{thm:subconvexity}.  Let $\pi'\in\mathfrak{F}_{n'}$ and $Q\geq 1$.  In \eqref{eqn:ZDE_F}, we rescale $\epsilon$ to $\epsilon/72$ and define $\alpha = \epsilon/(7.2\max\{n,n'\})$ so that
\begin{equation}
\label{eqn:thm_specialized}
|\{\pi\in\mathfrak{F}_n(Q)\colon N_{\pi\times\pi'}(1-\alpha,6)=0\}|\leq \sum_{\pi\in\mathfrak{F}_n(Q)}N_{\pi\times\pi'}(1-\alpha,6)\ll_{F,\epsilon}(C(\pi')^n |\mathfrak{F}_n(Q)|)^{\epsilon}.
\end{equation}
Let $\pi\in\mathfrak{F}_n(Q)$.  Proceeding as in the proof of \cite[Theorem 1.1]{ST}, we obtain\footnote{Small modifications are required to work over $F\neq\Q$, none of which substantially changes the proof.}
\[
\log|L(\tfrac{1}{2},\pi\times\pi')|\leq\Big(\frac{1}{4}-\frac{\alpha}{10^{9}}\Big)\log C(\pi\times\pi') +2\log|L(\tfrac{3}{2},\pi\times\pi')|+ \frac{\alpha}{10^{7}} N_{\pi\times\pi'}(1-\alpha,6)+O(1).
\]
By \eqref{eqn:Li_sharper} and \eqref{eqn:preconvex}, we find that there exists an effectively computable constant $\Cl[abcon]{kappa_const}=\Cr{kappa_const}(n,n',[F:\Q],\epsilon)>0$ such that if $C(\pi\times\pi')\geq \Cr{kappa_const}$, then
\begin{equation}
\label{eqn:ST_finale}
\log|L(\tfrac{1}{2},\pi\times\pi')|\leq\Big(\frac{1}{4}-\frac{9\alpha}{10^{10}}\Big)\log C(\pi\times\pi')\\
+\frac{\alpha}{10^7}N_{\pi\times\pi'}(1-\alpha,6).
\end{equation}
\cref{thm:subconvexity} now follows from \eqref{eqn:thm_specialized} and \eqref{eqn:ST_finale}.

\section{Effective multiplicity one on average}
\label{sec:multiplicity_one}

In this section, we prove \cref{thm:multiplicity_one}.  We consider the family of cuspidal automorphic representations $\mathfrak{F}_n(Q)$ over a number field $F$.  Recall our convention that implied constants are allowed to depend on $n\geq 3$, $[F:\Q]$, and $\epsilon$ unless specifically mentioned otherwise.

To prove \cref{thm:multiplicity_one}, we use \cref{thm:ZDE} to build large zero-free regions for almost all $L$-functions $L(s,\pi\times\pi')$ with $\pi\in\mathfrak{F}_n$ varying.  If $\pi'\in\{\tilde{\pi},\tilde{\pi}'\}$, then $L(s,\pi\times\pi')$ has the standard zero-free region \eqref{eqn:std_ZFR} apart from a possible Landau--Siegel zero, which must be both real and simple.  In all other cases, only Brumley's much narrower zero-free region is known (\cite{Brumley}, \cite[Appendix]{Lapid}).  We can use \cref{thm:ZDE} to establish a much stronger zero-free region for $L(s,\pi\times\pi')$ for all $\pi\in\mathfrak{F}_n(Q)$ with very few exceptions.  Here is a simple example.
\begin{corollary}
	\label{cor:good_ZFR}
	Let $\epsilon>0$, $n\geq 3$, and $\pi'\in\mathfrak{F}_n(Q)$.  For all $\pi\in\mathfrak{F}_n(Q)$ with $O_{\epsilon}(Q^{\epsilon})$ exceptions, the $L$-function $L(s,\pi\times\pi')$ is nonzero in the region
\[
\re(s)\geq 1-\frac{\epsilon}{20n^2},\qquad |\im(s)|\leq \log Q.
\]
\end{corollary}

\begin{proof}
This follows from \eqref{eqn:ZDE_Q} (once we rescale $\epsilon$ to $\epsilon/21$) with $C(\pi')\leq Q$, $T = \log Q$, and $\sigma = 1-\epsilon/(20n^2)$.
\end{proof}

For $\pi_1,\pi_2\in\mathfrak{F}_n$ define the numbers $\Lambda_{\pi_1\times\pi_2}(\kn)$ by the Dirichlet series identity (for $\re(s)>1$)
\begin{equation}
\label{eqn:vonMangdef}
\sum_{\kn}\frac{\Lambda_{\pi_1\times\pi_2}(\kn)}{\N\kn^s}= -\frac{L'}{L}(s,\pi_1\times\pi_2)= \sum_{\kp} \sum_{k = 1}^{\infty} \frac{\sum_{j=1}^{n}\sum_{j'=1}^{n}\alpha_{j,j',\pi_1\times\pi_2}(\kp)^k\log\N\kp}{\N\kp^{ks}}.
\end{equation}

\begin{lemma}
\label{lem:HT}
	\label{lem:HumphriesT}
	Let $\pi'\in\mathfrak{F}_n$.  There exist absolute and effectively computable constants $\Cl[abcon]{Hoh5}\in(0,1)$; $\Cl[abcon]{Hoh6},\Cl[abcon]{Hoh1},\Cl[abcon]{Hoh2},\Cl[abcon]{Hoh3}\geq 1$; and $\Cl[abcon]{Hoh4}\in(0,1)$ such that the following are true.
	\begin{enumerate}
		\item The $L$-function $L(s,\pi'\times\tilde{\pi}')$ has at most one zero, say $\beta_1$, in the region
		\[
		\re(s)\geq 1- \frac{\Cr{Hoh5}}{\log(C(\pi')^n(|\im(s)|+e)^{n^2[F:\Q]})}.
		\]
		If $\beta_1$ exists, then it must be real and simple and satisfy $\beta_1\leq 1-C(\pi')^{-\Cr{Hoh6}n}$.
		\item If $A\geq \Cr{Hoh1}$, $\log\log C(\pi')\geq \Cr{Hoh2}n^4[F:\Q]^2$, and $x\geq C(\pi')^{\Cr{Hoh3}A^2n^3[F:\Q]\log (en[F:\Q])}$, then
	\[
	\sum_{\frac{x}{2}<\N\kn\leq x}\Lambda_{\pi'\times\tilde{\pi}'}(\kn)=\begin{cases}
		\frac{x}{2}(1-\xi^{\beta_1-1})(1+O(e^{-\Cr{Hoh4}A}))&\mbox{if $\beta_1$ exists,}\\
		\frac{x}{2}(1+O(e^{-\Cr{Hoh4}A}))&\mbox{otherwise,}
	\end{cases}
	\]
	where $\xi\in[\frac{x}{2},x]$ satisfies $x^{\beta_1}-(\frac{x}{2})^{\beta_1}=\beta_1 \frac{x}{2}\xi^{\beta_1-1}$ and the implied constants are absolute and effectively computable.
	\end{enumerate}
\end{lemma}
\begin{proof}
This is \cite[Theorem 2.1]{HT} with $\delta=0$ and $x$ replaced with $x/2$.
\end{proof}

\cref{lem:HT} informs the following choices of parameters that we will use throughout the rest of this section.  Let $0<\epsilon<1$, let $Q$ be sufficiently large with respect to $\epsilon$, and let
\begin{equation}
\begin{aligned}
\label{eqn:params_M1}
&\Cr{Hoh6}\geq \frac{\Cr{Hoh1}^2\Cr{Hoh3}}{3240},\qquad A = \sqrt{\frac{3240 \Cr{Hoh6}}{\Cr{Hoh3}\epsilon}},\qquad  B= \frac{41n^2}{\epsilon},\qquad x=\frac{1}{2}(\log Q)^B,\\
&\pi\in\mathfrak{F}_n(Q),\qquad \pi'\in \mathfrak{F}_n\big(x^{\frac{1}{A^2\Cr{Hoh3}n^4[F:\Q]^2}}\big)=\mathfrak{F}_n\big((2^{-\frac{\epsilon}{41n^2}}\log Q)^{\frac{41}{3240\Cr{Hoh6}n^2[F:\Q]^2}}\big).
\end{aligned}
\end{equation}

\begin{corollary}
	\label{cor:lower_S}
	Under the notation and hypotheses of \eqref{eqn:params_M1}, there holds
	\[
	\sum_{\substack{\frac{x}{2}<\N\kn\leq x \\ \gcd(\kn,\kq_{\pi}\kq_{\pi'})=\mathcal{O}_F }}\Lambda_{\pi'\times\tilde{\pi}'}(\kn)\gg_{\epsilon}  x^{1-\frac{2\Cr{Hoh6}}{A^2\Cr{Hoh3}n^3[F:\Q]^2}}.
	\]
\end{corollary}
\begin{proof}
Let $\pi,\pi'\in\mathfrak{F}_n(Q)$.  In \cref{lem:HumphriesT}, the lower bound on $C(\pi')$ only serves to ensure that the implied constants are absolute.  This was pertinent in \cite{HT}, but it is not pertinent here.  Thus, we may replace the two conditions $\log\log C(\pi')\geq \Cr{Hoh2}n^4[F:\Q]^2$ and $x\geq C(\pi')^{A^2 \Cr{Hoh3}n^3[F:\Q]\log (en[F:\Q])}$ with the single condition $C(\pi')\ll x^{1/(A^2\Cr{Hoh3}n^4[F:\Q]^2)}$.  We want to refine \cref{lem:HumphriesT} so that one only sums over $\kn$ such that $\gcd(\kn,\kq_{\pi}\kq_{\pi'})=\mathcal{O}_F $.  Using \eqref{eqn:LRS_finite} and \eqref{eqn:LRS_2}, we find that the contribution to \cref{lem:HumphriesT}(2) arising from $\kn$ such that $\gcd(\kn,\kq_{\pi}\kq_{\pi'})\neq\cO_F$ is
\begin{align*}
\sum_{\substack{\frac{x}{2}<\N\kn\leq x \\ \gcd(\kn,\kq_{\pi}\kq_{\pi'})\neq\cO_F}}\Lambda_{\pi'\times\tilde{\pi}'}(\kn)\ll  \sum_{\kp \mid \kq_{\pi}\kq_{\pi'}} \sum_{\frac{\log(x/2)}{\log\N\kp}<j\leq \frac{\log x}{\log\N\kp}}\N\kp^{j(1-\frac{2}{n^2+1})}\ll \frac{\log Q}{\log\log Q}x^{1-\frac{2}{n^2+1}},
\end{align*}
which is $\ll x^{1-\frac{1}{n^2+1}}$.  In the worst case, where $\beta_1$ in \cref{lem:HumphriesT}(1) exists, it follows from \cref{lem:HumphriesT}(2) and the above discussion that if $C(\pi')\ll_{n,[F:\Q]} x^{1/(A^2\Cr{Hoh3}n^3[F:\Q]\log(en[F:\Q]))}$, then
\begin{align*}
	\sum_{\substack{x<\N\kn\leq 2x \\ \gcd(\kn,\kq_{\pi}\kq_{\pi'})=\mathcal{O}_F }}\Lambda_{\pi'\times\tilde{\pi}'}(\kn)\gg x(1-\xi^{\beta_1-1})\gg_{n,[F:\Q]} x\min\{1,(1-\beta_1)\log C(\pi')\}\gg \frac{x\log C(\pi')}{C(\pi')^{\Cr{Hoh6}n}}.
\end{align*}
The desired result now follows.
\end{proof}

\begin{lemma}
\label{lem:upper_bound_S}
	Let $n\geq 3$.  Assume the notation and hypotheses in \eqref{eqn:params_M1}.  Let $\Phi$ be a fixed smooth function supported on a compact subset of $[\frac{1}{4},2]$ such that $0\leq \Phi(t)\leq 1$ for all $t\in[\frac{1}{4},2]$ and $\Phi(t)=1$ for $t\in(\frac{1}{2},1]$.  If $L(s,\pi\times\pi')$ is entire and does not vanish in the region $\re(s)\geq 1-\epsilon/(20n^2)$ and $|\im(s)|\leq \log Q$, then
	\[
	\sum_{\gcd(\kn,\kq_{\pi}\kq_{\pi'})=\mathcal{O}_F }\Lambda_{\pi\times\tilde{\pi}'}(\kn)\Phi(\N\kn/x)\ll_{B} x^{1+\frac{2}{B}-\frac{\epsilon}{20n^2}}.
	\]
\end{lemma}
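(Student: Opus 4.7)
I would express the sum via Mellin inversion. Let
\[
\mathcal{L}(s) \coloneqq L(s,\pi\times\tilde{\pi}')\prod_{\kp\mid\kq_{\pi}\kq_{\pi'}}L(s,\pi_{\kp}\times\tilde{\pi}'_{\kp})^{-1},
\]
so that $-\mathcal{L}'/\mathcal{L}(s)=\sum_{\gcd(\kn,\kq_{\pi}\kq_{\pi'})=\cO_F}\Lambda_{\pi\times\tilde{\pi}'}(\kn)\N\kn^{-s}$. Since $\Phi\in C_c^{\infty}(\R_{>0})$, its Mellin transform $\widetilde{\Phi}$ is entire and of rapid decay in vertical strips, and
\[
S \coloneqq \sum_{\gcd(\kn,\kq_{\pi}\kq_{\pi'})=\cO_F}\Lambda_{\pi\times\tilde{\pi}'}(\kn)\Phi(\N\kn/x) = \frac{1}{2\pi i}\int_{(2)}\Big(-\frac{\mathcal{L}'}{\mathcal{L}}(s)\Big)\widetilde{\Phi}(s)\,x^s\,ds.
\]

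\textbf{Contour shift.} Set $\sigma_0\coloneqq 1-\frac{\epsilon}{20n^2}+\frac{1}{\log x}$ and $T\coloneqq \tfrac{1}{2}\log Q$, and shift to the vertical segment $\re(s)=\sigma_0$, $|\im(s)|\leq T$, closing the contour off along horizontal segments at height $\pm T$ back to $\re(s)=2$. No poles are crossed: because $\pi\neq\pi'$, $L(s,\pi\times\tilde{\pi}')$ is entire; the removed local factors are holomorphic for $\re(s)>\theta_n+\theta_{n'}$ (well to the left of the rectangle by \eqref{eqn:LRS_finite}); and by hypothesis $L(s,\pi\times\tilde{\pi}')$ is nonvanishing in the closed rectangle, with a buffer of width at least $(\log x)^{-1}$ to its left edge. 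The tail of the original $\re(s)=2$ line beyond $|\im(s)|=T$ contributes $O(1)$ from rapid decay of $\widetilde{\Phi}$. The two horizontal sides contribute negligibly after crudely bounding $\mathcal{L}'/\mathcal{L}$ by a polynomial in $Q$ on the rectangle (via Cauchy's differentiation formula applied to $\log L$ on a disk where $L\neq 0$, together with the convexity bound \eqref{eqn:preconvex}) and paying the factor $(\log Q)^{-M}$ from $\widetilde{\Phi}(\sigma\pm iT)$ with $M$ large.

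\textbf{The key estimate for $\mathcal{L}'/\mathcal{L}$ on the shifted line.} The ramified correction $\sum_{\kp\mid\kq_{\pi}\kq_{\pi'}}\tfrac{d}{ds}\log L(s,\pi_{\kp}\times\tilde{\pi}'_{\kp})$ is $\ll \log Q$ by \eqref{eqn:LRS_2} since $1-\sigma_0$ stays bounded away from $\theta_n+\theta_{n'}$. For the global piece, I use the Hadamard factorization of $\Lambda(s,\pi\times\tilde{\pi}')$ differenced with its value at the anchor $3+i\im(s)$: the archimedean $\Gamma$-contributions and the value at the anchor are $O(\log C(\pi\times\tilde{\pi}',\im(s)))$, and the sum over zeros splits into far zeros (contributing $O(\log C(\pi\times\tilde{\pi}',\im(s)))$ by the standard estimate $|\{\rho:|\rho-(3+it)|\leq 1\}|\ll \log C(\pi\times\tilde{\pi}',t)$) and zeros with $|\rho-s|\leq 1$, each of which must satisfy $|s-\rho|\geq (\log x)^{-1}$ by the nonvanishing hypothesis, contributing $\ll (\log x)\log C(\pi\times\tilde{\pi}',\im(s))$. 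Using \eqref{eqn:BH} together with $C(\pi)\leq Q$ and $C(\pi')\leq(\log Q)^A$,
\[
\Big|\frac{\mathcal{L}'}{\mathcal{L}}(s)\Big| \ll (\log x)\bigl(\log Q + \log(1+|\im(s)|)\bigr) \quad\text{on }\re(s)=\sigma_0,\ |\im(s)|\leq T.
\]

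\textbf{Assembly.} Substituting and using rapid decay of $\widetilde{\Phi}$ to absorb $\log(1+|t|)$ against the measure $dt$,
\[
|S|\ll x^{\sigma_0}(\log x)(\log Q)\int_{-T}^{T}(1+|t|)^{-2}\,dt \ll x^{1-\frac{\epsilon}{20n^2}}(\log x)(\log Q),
\]
since $x^{1/\log x}=e$. Finally, $x=\tfrac{1}{2}(\log Q)^B$ gives $\log Q\asymp x^{1/B}$ and $\log x\ll \log\log Q$, so $(\log x)(\log Q)\ll_B x^{2/B}$ for $Q$ sufficiently large, yielding the claimed bound. The main technical point is the logarithmic-derivative estimate of the previous paragraph; everything else is routine Mellin-inversion bookkeeping enabled by the smoothness and compact support of $\Phi$.
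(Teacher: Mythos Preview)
Your argument is correct in outline and reaches the stated bound, but it takes a different route from the paper and has one soft spot that needs tightening.

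\textbf{Comparison with the paper.} The paper does not shift to a line and bound the logarithmic derivative. Instead it pushes the contour all the way and writes the smoothed sum as an explicit formula
\[
\sum_{\gcd(\kn,\kq_{\pi}\kq_{\pi'})=\mathcal{O}_F}\Lambda_{\pi\times\tilde{\pi}'}(\kn)\Phi(\N\kn/x)=-\sum_{L^{S}(\rho,\pi\times\tilde{\pi}')=0}\hat{\Phi}(\rho)x^{\rho},
\]
the sum running over \emph{all} zeros of $L^{S}$. It then truncates to $\beta\geq\sigma_0$, $|\gamma|\leq T$ using the rapid decay $|\hat{\Phi}(s)|\ll_R |s|^{-R}$ and the local zero count $\ll\log Q+\log(|t|+2)$; the zero-free hypothesis makes the truncated sum empty, and the two error terms are balanced by choosing $T=\log Q$ and $R=\max\{B(1-\sigma_0),3\}$. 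Your approach instead keeps the logarithmic derivative intact on $\re(s)=\sigma_0$ and bounds it via Hadamard factorization, paying a factor $\log x$ for the distance to the nearest zero. Both are standard; the paper's version is a little cleaner in that it never needs to estimate $L'/L$ inside the strip, while yours avoids having to handle the full sum over zeros.

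\textbf{The soft spot.} Your treatment of the horizontal segments is not right as written. You bound $\mathcal{L}'/\mathcal{L}$ there ``by a polynomial in $Q$'' and plan to kill it with $\widetilde{\Phi}(\sigma\pm iT)\ll(\log Q)^{-M}$. But $T=\tfrac12\log Q$, so $(\log Q)^{-M}$ cannot absorb a genuine power $Q^{c}$; and the Cauchy/$\log L$ argument you sketch requires a \emph{lower} bound for $|L|$ on the boundary circle, which convexity does not supply. The fix is immediate: the Hadamard-based estimate you derive on the vertical line,
\[
\Big|\frac{\mathcal{L}'}{\mathcal{L}}(s)\Big|\ll(\log x)\bigl(\log Q+\log(3+|\im(s)|)\bigr),
\]
holds uniformly for $\re(s)\geq\sigma_0$ and $|\im(s)|\leq T$ (the near zeros still satisfy $|\gamma|\leq T+1<\log Q$, hence $\re(\rho)<1-\epsilon/(20n^2)$ and $|s-\rho|\geq 1/\log x$). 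Applying this on the horizontal edges gives a contribution $\ll(\log x)(\log Q)(\log Q)^{-M}x^{2}\ll(\log Q)^{2B+2-M}$, which is negligible for $M$ large depending on $B$. With that correction your proof goes through as stated.
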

\begin{proof}
Writing $\hat{\Phi}(s)=\int_0^{\infty}\Phi(t)t^{s-1}dt$ for the Mellin transform of $\Phi$ (which is an entire function of $s$), we observe via Mellin inversion that
\[
\sum_{\gcd(\kn,\kq_{\pi}\kq_{\pi'})=\mathcal{O}_F }\Lambda_{\pi\times\tilde{\pi}'}(\kn)\Phi(\N\kn/x) = \frac{1}{2\pi i}\int_{2-i\infty}^{2+i\infty}-\frac{(L^{\textup{ur}})'}{(L^{\textup{ur}})}(s,\pi\times\tilde{\pi}')\hat{\Phi}(s)x^s ds,
\]
where $L^{\textup{ur}}(s,\pi\times\tilde{\pi}')=L(s,\pi\times\tilde{\pi}')\prod_{\kp\mid\kq_{\pi}\kq_{\pi'}}L(s,\pi_{\kp}\times\tilde{\pi}_{\kp}')^{-1}$.  A standard contour integral calculation using the argument principle shows that the above display equals
\[
-\sum_{L^{\textup{ur}}(\rho,\pi\times\tilde{\pi}')=0}\hat{\Phi}(\rho)x^{\rho},
\]
where $\rho$ ranges over all zeros of $L^{\textup{ur}}(s,\pi\times\tilde{\pi}')$.

Since $\Phi$ is compactly supported and $\hat{\Phi}$ is entire, it follows that for any $R\geq 2$, we have $|\hat{\Phi}(s)|\ll_{R}\min\{1,|s|^{-R}e^{\re(s)}\}$.  Note that the reciprocals of the Euler factors of $L(s,\pi\times\tilde{\pi}')$ at prime ideals $\kp \mid \kq_{\pi}\kq_{\pi'}$ and all of the trivial zeros of $L(s,\pi\times\tilde{\pi}')$ have real part no larger than $1-\frac{2}{n^2+1}$ per \eqref{eqn:LRS_finite} and \eqref{eqn:LRS_2}.  Since for any $t\in\R$ there are $\ll\log Q+\log(|t|+2)$ zeros $\rho=\beta+i\gamma$ of $L^{\textup{ur}}(s,\pi\times\tilde{\pi}')$ that satisfy $0<\beta<1$ and $|\gamma-t|\leq 1$, we find for any $T\geq 1$, $R\geq 2$, and $\sigma_0\geq 1-\frac{2}{n^2+1}$ that
\begin{equation}
\label{eqn:zero_empty_sum}
\sum_{L^{\textup{ur}}(\rho,\pi\times\tilde{\pi}')=0}\hat{\Phi}(\rho)x^{\rho}=\sum_{\substack{L^{\textup{ur}}(\rho,\pi\times\tilde{\pi}')=0 \\ \beta\geq \sigma_0 \\ |\gamma|\leq T}}\hat{\Phi}(\rho)x^{\rho}+O(T^{1-R}\log(QT)x+T\log(QT)x^{\sigma_0}).
\end{equation}
We choose $T=\log Q=(2x)^{1/B}$ and $\sigma_0=1-\epsilon/(20n^2)$, in which case our hypotheses imply that the sum over zeros on the right-hand side of \eqref{eqn:zero_empty_sum} is empty and
\[
\Big|\sum_{L^{\textup{ur}}(\rho,\pi\times\tilde{\pi}')=0}\hat{\Phi}(\rho)x^{\rho}\Big|\ll_{B} x^{1+\frac{2-R}{B}}+x^{\frac{2}{B}+\sigma_0}.
\]
The desired result follows from choosing $R=\max\{B(1-\sigma_0),3\}$.
\end{proof}

\begin{proof}[Proof of \cref{thm:multiplicity_one}]

Suppose to the contrary that $\pi\neq\pi'$ and $\pi_{\kp}\cong\pi_{\kp}'$ for all $\kp\nmid\kq_{\pi}\kq_{\pi'}$ with $\N\kp\leq 2x$.  Then $A_{\pi}(\kp)=A_{\pi'}(\kp)$ for all $\kp\nmid\kq_{\pi}\kq_{\pi'}$ with $\N\kp\leq 2x$.  By \eqref{eqn:separate_dirichlet_coeffs} and \eqref{eqn:vonMangdef}, it follows that $\Lambda_{\pi\times\tilde{\pi}'}(\kn)=\Lambda_{\pi'\times\tilde{\pi}'}(\kn)$ for all $\kn$ such that $\gcd(\kn,\kq_{\pi}\kq_{\pi'})=\mathcal{O}_F $ and $\N\kn\leq x$.  Since $\Lambda_{\pi'\times\tilde{\pi}'}(\kn)\geq 0$ for all such $\kn$, the same holds for $\Lambda_{\pi\times\tilde{\pi}'}(\kn)$.  It follows that
\[
\sum_{\substack{\frac{x}{2}<\N\kn\leq x \\ \gcd(\kn,\kq_{\pi}\kq_{\pi'})=\mathcal{O}_F }}\Lambda_{\pi'\times\tilde{\pi}'}(\kn)=\sum_{\substack{\frac{x}{2}<\N\kn\leq x \\ \gcd(\kn,\kq_{\pi}\kq_{\pi'})=\mathcal{O}_F }}\Lambda_{\pi\times\tilde{\pi}'}(\kn).
\]

By \eqref{eqn:params_M1}, we have that
\begin{equation}
\label{eqn:constraint_c_prime}
C(\pi')\ll x^{1/(A^2\Cr{Hoh3}n^4[F:\Q]^2)}.
\end{equation}
Therefore, if $L(s,\pi\times\tilde{\pi}')\neq 0$ in the region $\{s\in\mathbb{C}\colon \Re(s)\geq 1-\epsilon/(20n^2),~|\im(s)|\leq \log Q\}$, then by \cref{cor:lower_S} and \cref{lem:upper_bound_S}, we have that
\[
x^{1-\frac{2\Cr{Hoh6}}{A^2\Cr{Hoh3}n^3[F:\Q]^2}}\ll \sum_{\substack{\frac{x}{2}<\N\kn\leq x \\ \gcd(\kn,\kq_{\pi}\kq_{\pi'})=\mathcal{O}_F }}\Lambda_{\pi'\times\tilde{\pi}'}(\kn)=\sum_{\substack{\frac{x}{2}<\N\kn\leq x \\ \gcd(\kn,\kq_{\pi}\kq_{\pi'})=\mathcal{O}_F }}\Lambda_{\pi\times\tilde{\pi}'}(\kn)\ll_{B}x^{1+\frac{2}{B}-\frac{\epsilon}{20n^2}}.
\]
This implies that
\[
1-\frac{2\Cr{Hoh6}}{A^2\Cr{Hoh3}n^3[F:\Q]^2}\leq 1+\frac{2}{B}-\frac{\epsilon}{20n^2},
\]
which contradicts our choices of $A$ and $B$ in \eqref{eqn:params_M1}.  The desired result now follows from \cref{cor:good_ZFR}.
\end{proof}

\section{Automorphic level of distribution}
\label{sec:BV}

In this section, we prove \cref{thm:BV}.  In what follows, let $F=\Q$.  If $n=2$, then $L(s,\pi\times\tilde{\pi})$ is the $L$-function of an isobaric automorphic representation of $\GL_4(\A_{\Q})$ whose cuspidal constituents have rank at most 3.  Since the $L$-function any Dirichlet character twist of any  cuspidal constituents of rank 2 or 3 have no Landau--Siegel zero by \cite{Banks,Hoffstein}, a stronger result than \cref{thm:BV} follows from a minor variation of the proof of \cite[Theorem 1.1]{MR4530115}.  Therefore, we may restrict our consideration to $n\geq 3$.

Let $\pi\in\mathfrak{F}_n$ have arithmetic conductor $q_{\pi}$, and let $\chi$ be a primitive Dirichlet character modulo $q$.  We will allow $\pi$ to be fixed, so for notational compactness, we introduce $L_{\chi}(s) \coloneqq  L(s,\pi\times(\tilde{\pi}\otimes\chi))$ and $L_1(s)\coloneqq L(s,\pi\times\tilde{\pi})$. If $q$ and $q_{\pi}$ are coprime, then $L_{\chi}(s)$ is entire if and only if $\chi$ is nontrivial, since $\tilde{\pi} \otimes \chi \neq \tilde{\pi}$, which is clear by comparing the arithmetic conductors of $q_{\tilde{\pi} \otimes \chi}$ and $q_{\tilde{\pi}}$. If $\chi$ is trivial, then $L_{\chi}(s) = L_1(s)$.  We also define $\Lambda_{\chi}(s)\coloneqq \Lambda(s,\pi\times(\tilde{\pi}\otimes\chi))$ and $\Lambda_1(s)\coloneqq\Lambda(s,\pi\times\tilde{\pi})$.

\subsection{Preliminaries}

We define $a_{\pi}(m)$ and $a_{\pi\times\tilde{\pi}}(m)$ by the Dirichlet series identities
\begin{align*}
	 \sum_{n=1}^{\infty}\frac{a_{\pi}(m)\Lambda(m)}{m^s}&=-\frac{L'}{L}(s,\pi),\qquad 
	  \sum_{n=1}^{\infty}\frac{a_{\pi\times\tilde{\pi}}(m)\Lambda(m)}{m^s}=-\frac{L'}{L}(s,\pi\times\tilde{\pi}),\qquad\re(s)>1.
\end{align*}
The local calculations in \cite[Lemma 2.1]{LRS_selberg} show that if $\chi\pmod{q}$ is a primitive Dirichlet character and $\gcd(q,q_{\pi})=1$, then for all primes $p$, we have that
\begin{equation}
	\label{eqn:LRS_decouple}
L(s,\pi_p\times(\tilde{\pi}\otimes\chi)_p) = \prod_{j=1}^n \prod_{j'=1}^{n}(1-\alpha_{j,j',\pi\times\tilde{\pi}}(p)\chi(p)p^{-s})^{-1}.
\end{equation}
It follows from \eqref{eqn:separate_dirichlet_coeffs} and \eqref{eqn:LRS_decouple} that if $\gcd(m,q_{\pi})=1$, then $a_{\pi\times(\tilde{\pi}\otimes\chi)}(m) = |a_{\pi}(m)|^2 \chi(m)$.  We now provide a convenient expression for $-L_{\chi}'(s)/L_{\chi}(s)$.

\begin{lemma}
\label{lem:convenient_factorization}
Let $\pi\in\mathfrak{F}_n$.  Let $\psi\pmod{q}$ be a Dirichlet character with $q$ such that $\gcd(q,q_{\pi})=1$ and $\chi$ be the primitive Dirichlet character that induces $\psi$.  Let $\delta(\chi)=1$ if $\chi$ is trivial and $\delta(\chi)=0$ otherwise.  There exists a function $H_{\pi}(s;\chi,\psi)$ such that in the region $\re(s)\geq 1-\frac{1}{n^2}$,
	\begin{enumerate}
		\item $H_{\pi}(s;\chi,\psi)$ is holomorphic,
		\item $|H_{\pi}(s;\chi,\psi)|\ll_{\pi} \log(q(3+|\im(s)|))$, and
		\item we have the identity
		\[
	\sum_{m=1}^{\infty}\frac{|a_{\pi}(m)|^2\psi(m)\Lambda(m)}{m^s}=\frac{\delta(\chi)}{s-1}-\frac{\Lambda_{\chi}'}{\Lambda_{\chi}}(s) +H_{\pi}(s;\chi,\psi).
	\]
	\end{enumerate}
\end{lemma}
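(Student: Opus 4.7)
The plan is to define $H_{\pi}(s;\chi,\psi)$ explicitly as the combination of terms that makes identity (iii) hold, and then to verify (i) and (ii) term-by-term. Unpacking the definition \eqref{eqn:Lambdaspixpi'} of the completed $L$-function (with $D_F = 1$ since $F = \Q$) and using that $r_{\pi\times(\tilde\pi\otimes\chi)} = \delta(\chi)$ under our coprimality assumption---since $\widetilde{\tilde\pi\otimes\chi} = \pi\otimes\chi^{-1}$ equals $\pi$ exactly when $\chi$ is trivial, as $\pi\otimes\chi$ has arithmetic conductor $q_{\pi}q^{n}$ when $\gcd(q,q_{\pi})=1$---a direct logarithmic differentiation gives
\begin{align*}
\frac{\delta(\chi)}{s-1} - \frac{\Lambda_{\chi}'}{\Lambda_{\chi}}(s) = -\frac{L_{\chi}'}{L_{\chi}}(s) - \frac{\delta(\chi)}{s} - \frac{1}{2}\log q_{\pi\times(\tilde\pi\otimes\chi)} - \frac{L'(s,\pi_{\infty}\times(\tilde\pi\otimes\chi)_{\infty})}{L(s,\pi_{\infty}\times(\tilde\pi\otimes\chi)_{\infty})}.
\end{align*}
Combining this with \eqref{eqn:LRS_decouple}, which for $\re(s)>1$ yields $-L_{\chi}'/L_{\chi}(s) = \sum_{m}a_{\pi\times\tilde\pi}(m)\chi(m)\Lambda(m)m^{-s}$, identity (iii) then forces
\begin{align*}
H_{\pi}(s;\chi,\psi) \coloneqq D(s) + \frac{\delta(\chi)}{s} + \frac{1}{2}\log q_{\pi\times(\tilde\pi\otimes\chi)} + \frac{L'(s,\pi_{\infty}\times(\tilde\pi\otimes\chi)_{\infty})}{L(s,\pi_{\infty}\times(\tilde\pi\otimes\chi)_{\infty})},
\end{align*}
where
\begin{align*}
D(s) \coloneqq \sum_{m=1}^{\infty}\frac{\bigl(|a_{\pi}(m)|^{2}\psi(m) - a_{\pi\times\tilde\pi}(m)\chi(m)\bigr)\Lambda(m)}{m^s}.
\end{align*}

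The crucial step is to show that $D(s)$ is actually supported on prime powers $m=p^{k}$ with $p\mid q_{\pi}q$. When $\gcd(m,q_{\pi}q)=1$ one has $\psi(m)=\chi(m)$, while for $p\nmid q_{\pi}$ the relation \eqref{eqn:separate_dirichlet_coeffs} combined with $\alpha_{j,\tilde\pi}(p)=\overline{\alpha_{j,\pi}(p)}$ gives
\begin{align*}
a_{\pi\times\tilde\pi}(p^k) = \sum_{j,j'}\bigl(\alpha_{j,\pi}(p)\overline{\alpha_{j',\pi}(p)}\bigr)^{k} = \Big|\sum_{j}\alpha_{j,\pi}(p)^{k}\Big|^{2} = |a_{\pi}(p^k)|^{2},
\end{align*}
so the two contributions to $D$ cancel. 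The Luo--Rudnick--Sarnak bounds \eqref{eqn:LRS_finite} and \eqref{eqn:LRS_2} yield $|a_{\pi}(p^k)|^{2},|a_{\pi\times\tilde\pi}(p^k)| \leq n^{2}p^{2k\theta_n}$ with $\theta_n \leq \tfrac{1}{2}-\tfrac{1}{n^{2}+1}$, so a geometric series estimate shows that $D(s)$ converges absolutely on $\re(s)>2\theta_n$. Since $1 - 1/n^{2} > 1 - 2/(n^{2}+1)\geq 2\theta_n$ whenever $n\geq 2$, this region contains $\re(s)\geq 1-1/n^{2}$, yielding holomorphy of $D(s)$ there together with the estimate
\begin{align*}
|D(s)| \ll_{\pi}\sum_{p\mid q_{\pi}q}\log p \ll_{\pi}\log q.
\end{align*}

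The remaining three terms of $H_{\pi}$ are handled directly: $\delta(\chi)/s$ is trivially holomorphic and $O(1)$ on $\re(s)\geq 1-1/n^{2}>0$; the constant $\tfrac{1}{2}\log q_{\pi\times(\tilde\pi\otimes\chi)}$ is $\ll_{\pi}\log q$ via the Bushnell--Henniart bound \eqref{eqn:BH}; and the archimedean log-derivative is a finite sum of digamma values $(\Gamma_{\R}'/\Gamma_{\R})(s+\mu_{j,j',\pi\times(\tilde\pi\otimes\chi)}(\infty))$, whose poles lie at $\re(s)\leq 2\theta_n$ by \eqref{eqn:LRS_2} (since twisting by $\chi$ shifts the archimedean Langlands parameters of $\tilde\pi$ by either $0$ or $1$), and which Stirling's formula bounds by $O_{\pi}(\log(3+|\im(s)|))$ uniformly in $\chi$ on $\re(s)\geq 1 - 1/n^{2}$. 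Summing the four contributions yields both (i) and (ii). The argument is mostly bookkeeping; the one place where nontrivial input enters is the holomorphy of $D(s)$ on $\re(s)\geq 1-1/n^{2}$, which relies decisively on the Luo--Rudnick--Sarnak exponent $\theta_n \leq \tfrac{1}{2}-\tfrac{1}{n^{2}+1}$ toward the generalized Ramanujan conjecture.
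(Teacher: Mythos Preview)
Your proof is correct and follows essentially the same approach as the paper: unpack $\Lambda_{\chi}'/\Lambda_{\chi}$, isolate the contributions from primes dividing $q_{\pi}q$, and bound the archimedean, conductor, and bad-prime terms separately using \eqref{eqn:LRS_finite}, \eqref{eqn:LRS_2}, \eqref{eqn:BH}, and Stirling. The only organizational difference is that the paper first treats the primitive case $\psi=\chi$ (writing the bad-prime contribution explicitly in terms of the local log-derivatives) and then bounds the difference $\sum_{m}|a_{\pi}(m)|^2(\psi(m)-\chi(m))\Lambda(m)m^{-s}$ separately, whereas you absorb both steps into the single Dirichlet series $D(s)$.
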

\begin{proof}
Suppose first that $\psi$ is primitive, in which case $\psi=\chi$ and the function $H_{\pi}(s;\chi,\psi)$ is
\begin{multline*}
\frac{\log q_{\pi\times(\tilde{\pi}\otimes\chi)}}{2}+\frac{L'}{L}(s,\pi_{\infty}\times(\tilde{\pi}\otimes\chi)_{\infty})+\frac{\delta(\chi)}{s}+\sum_{p \mid q_{\pi}q}\frac{L'}{L}(s,\pi_{p}\times(\tilde{\pi}'\otimes\chi)_p)\\
-\sum_{p\mid q_{\pi}q}\sum_{\substack{1\leq j\leq n \\ 1\leq j'\leq n}}\frac{\alpha_{j,\pi}(p)\overline{\alpha_{j',\pi'}(p)}\chi(p)\log p}{p^s-\alpha_{j,\pi}(p)\overline{\alpha_{j',\pi}(p)}\chi(p)}.
\end{multline*}
This is holomorphic and bounded as claimed for $\re(s)\geq 1-\frac{1}{n^2}$ by \eqref{eqn:LRS_2}, \eqref{eqn:BH}, and Stirling's formula.  If $\psi$ is not primitive and $\chi$ is the primitive character that induces $\psi$, then in the same region, we have
\[
	\Big|\sum_{m=1}^{\infty}\frac{|a_{\pi}(m)|^2\psi(m)\Lambda(m)}{m^s}-\sum_{m=1}^{\infty}\frac{|a_{\pi}(m)|^2\chi(m)\Lambda(m)}{m^s}\Big|\leq\sum_{p \mid q}\sum_{k\geq 1}\frac{|a_{\pi}(p^k)|^2\log p}{p^{k\mathop{\re}(s)}},
\]
which is bounded as desired using \eqref{eqn:LRS_2} again.
\end{proof}

We will the following zero-free region for $L_{\chi}(s)$ and Siegel-type bound on any real exceptional zeros.  We will prove this theorem later.
\begin{theorem}
\label{thm:ZFR}
	Let $Q\geq 3$ and $\pi\in\mathfrak{F}_n$.  There exists an effectively computable constant $\Cl[abcon]{ZFR_BV_final}=\Cr{ZFR_BV_final}(\pi)>0$ such that for all primitive Dirichlet characters $\chi\pmod{q}$ with $q\leq Q$ and $\gcd(q,q_{\pi})=1$ with at most one exception, the $L$-function $L(s,\pi\times(\tilde{\pi}\otimes\chi))$ does not vanish in the region
	\[
	\re(s)\geq 1-\frac{\Cr{ZFR_BV_final}}{\log(Q(|\im(s)|+3))}.
	\]
	If the exceptional character $\chi_1\pmod{q_1}$ exists, then $L(s,\pi\times(\tilde{\pi}\otimes\chi_1))$ has at most one zero, say $\beta_1$, in this region; $\beta_1$ is both real and simple; and $\chi_1$ must be quadratic.  Moreover, for all $\epsilon>0$, there exists an ineffective constant $c_{\pi}(\epsilon)>0$ such that $\beta_1\leq 1-c_{\pi}(\epsilon)q_1^{-\epsilon}$.
\end{theorem}

\subsection{Proof of \texorpdfstring{\cref{thm:BV}}{Theorem \ref*{thm:BV}}}

We follow Gallagher's proof of the Bombieri--Vinogradov theorem in \cite{Gallagher_BV}, with $n\geq 2$ and $\pi\in\mathfrak{F}_n$ fixed at the onset.  Note that the function
\[
\psi_k(y;q,a)\coloneqq \frac{1}{k!}\sum_{\substack{m\leq y\\ m\equiv a\pmod{q}}}|a_{\pi}(m)|^2\Lambda(m)\Big(\log\frac{y}{m}\Big)^k
\]
is monotonically increasing as a function of $y$ for each $k\geq 0$.  Thus, if $0<\lambda\leq 1$, then
\[
\frac{1}{\lambda}\int_{e^{-\lambda}y}^{y}\psi_{k-1}(t;q,a)\frac{dt}{t}\leq \psi_{k-1}(y;q,a)\leq \frac{1}{\lambda}\int_{y}^{e^{\lambda}y}\psi_{k-1}(t;q,a)\frac{dt}{t}.
\]
The integrals, which equal $\psi_k(y;q,a)-\psi_k(e^{-\lambda}y;q,a)$ and $\psi_k(e^{\lambda}y;q,a)-\psi_k(y;q,a)$ respectively, both have the same asymptotic expansion, namely
\[
(\lambda+O(\lambda^2))\frac{y}{\varphi(q)}+O(\max_{y\leq ex}|r_k(y;q,a)|),\qquad r_k(y;q,a)\coloneqq \psi_k(y;q,a)-\frac{y}{\varphi(q)},
\]
where $\varphi$ is Euler's totient function.  Thus, we have the bounds
\[
\max_{y\leq x}|r_{k-1}(y;q,a)|\ll \frac{\lambda x}{\varphi(q)}+\frac{1}{\lambda}\max_{y\leq ex}|r_k(y;q,a)|
\]
and
\[
\sum_{\substack{q\leq x^{\theta} \\ \gcd(q,q_{\pi})=1}}\max_{\gcd(a,q)=1}\max_{y\leq x}|r_{k-1}(y;q,a)|\ll \lambda x\log x+\frac{1}{\lambda}\sum_{\substack{q\leq x^{\theta} \\ \gcd(q,q_{\pi})=1}}\max_{\gcd(a,q)=1}\max_{y\leq ex}|r_k(y;q,a)|.
\]
It follows by induction on $k$ that
\[
\sum_{\substack{q\leq x^{\theta} \\ \gcd(q,q_{\pi})=1}}\max_{\gcd(a,q)=1}\max_{y\leq x}|r_0(y;q,a)|\ll_k \lambda x\log x + \frac{1}{\lambda^{2^k-1}}\sum_{\substack{q\leq x^{\theta} \\ \gcd(q,q_{\pi})=1}}\max_{\gcd(a,q)=1}\max_{y\leq ex}|r_k(y;q,a)|.
\]

\begin{proposition}
	\label{prop:BV}
	If $\theta<1/(9n^3)$ is fixed and $k=9n^2+1$, then for any $B>0$, we have
	\[
	\sum_{\substack{q\leq x^{\theta} \\ \gcd(q,q_{\pi})=1}}\max_{\gcd(a,q)=1}\max_{y\leq ex}|r_k(y;q,a)|\ll_{\pi,B} \frac{x}{(\log x)^B}.
	\]
\end{proposition}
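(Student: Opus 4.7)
Following Gallagher's treatment of the Bombieri--Vinogradov theorem, orthogonality of Dirichlet characters modulo $q$ gives
\[
r_k(y;q,a) = \frac{1}{\varphi(q)} \sum_{\chi \pmod{q},\, \chi \neq \chi_0} \bar{\chi}(a) \psi_k(y,\chi) + O_{\pi}\Big(\frac{(\log x)^C}{\varphi(q)}\Big),
\]
where $\psi_k(y,\chi) = \frac{1}{k!}\sum_{m\leq y}|a_{\pi}(m)|^2 \chi(m)\Lambda(m)(\log y/m)^k$, and the error term arises from the principal character contribution, controlled via the standard zero-free region for the fixed $L(s,\pi\times\tilde{\pi})$. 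Reduction of each $\chi$ to its primitive inducing character $\chi^*$ via Lemma 9.1 introduces only polylogarithmic error; thus it suffices to estimate $\sum_{q \leq x^{\theta},\, \gcd(q,q_{\pi})=1} \varphi(q)^{-1}\sum^{*}_{\chi^*} \max_{y \leq ex} |\psi_k(y,\chi^*)|$, where the star indicates summation over primitive nontrivial characters.

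For each such $\chi^*$, truncated Perron inversion applied to Lemma 9.1(3), with smoothing weight $s^{-(k+1)}$ and truncation height $T = (\log x)^{C'}$, shifts the contour into the region of holomorphy supplied by Lemma 9.1(1) and produces
\[
\psi_k(y,\chi^*) = -\sum_{\substack{\rho:\,\Lambda_{\chi^*}(\rho)=0 \\ |\im\rho|\leq T}} \frac{y^{\rho}}{\rho^{k+1}} + O_{\pi}((\log x)^{C''}).
\]
By \cref{thm:ZFR}, the $L$-function $L(s,\pi\times(\tilde{\pi}\otimes\chi^*))$ enjoys a standard zero-free region $\re(s) \geq 1 - c_{\pi}/\log(x^{\theta}T)$ uniformly over all $q^* \leq x^{\theta}$ coprime to $q_{\pi}$, except for at most one exceptional quadratic character $\chi_1 \pmod{q_1}$ carrying a simple real Siegel zero $\beta_1$.

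For non-exceptional characters, \cref{thm:ZDE} applied to the subfamily $\{\tilde{\pi}\otimes\chi^*\} \subset \mathfrak{F}_n$ (which grows polynomially in its analytic conductor $\ll x^{\theta n}$) bounds
\[
\sum_{q^*\leq x^{\theta}}\sum^{*}_{\chi^*} N_{(\tilde{\pi}\otimes\chi^*)\times\pi}(\sigma,T) \ll_{\pi,\epsilon} (x^{\theta n}T)^{9n(n+1)(1-\sigma)+\epsilon}.
\]
Dyadic decomposition in $|\im\rho|$ converges thanks to our choice $k = 9n^2+9n+1 > 9n(n+1)$, and integration by parts in $\sigma = \re\rho$ combined with the zero-free region shows that the total contribution from these zeros is $\ll_{B,\epsilon} x/(\log x)^B$ for any $B>0$, provided $\theta < 1/(9n^2(n+1))$: this condition guarantees that the geometric factor $(x^{\theta n})^{9n(n+1)(1-\sigma)}$ arising from \cref{thm:ZDE} is strictly dominated by $y^{\sigma}$ uniformly over the relevant range of $\sigma$.

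The chief obstacle is the exceptional Siegel zero $\beta_1$, which forces an ineffective treatment. The plan is to split on the size of $q_1$: if $q_1 \leq (\log x)^{C_B}$ then Siegel's bound $\beta_1 \leq 1 - c_{\pi}(\eta)q_1^{-\eta}$ with $\eta$ chosen small as a function of $B$ yields a contribution $\ll y\exp(-c_{\pi}(\eta)(\log x)^{1-C_B\eta})\ll_B x/(\log x)^B$; if $q_1 > (\log x)^{C_B}$ then the standard zero-free region applies to $L(s,\pi\times(\tilde{\pi}\otimes\chi_1))$ and the trivial bound $\sum_{q_1 \mid q \leq x^{\theta}}\varphi(q)^{-1} \ll (\log x)^{1-C_B}q_1^{-1}$ already supplies the required savings. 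Choosing $C_B$ sufficiently large relative to $B$ completes the proof, with ineffectiveness entering only through Siegel's constant.
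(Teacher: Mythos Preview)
Your overall strategy matches the paper's---orthogonality, reduction to primitive characters via \cref{lem:convenient_factorization}, an explicit formula, the zero density estimate of \cref{thm:ZDE}, and the zero-free region with Siegel bound from \cref{thm:ZFR}---but there is a genuine gap in the treatment of the non-exceptional zeros that prevents you from reaching $\ll_{\pi,B} x/(\log x)^B$ for arbitrary $B$.

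The gap is that you apply both the density estimate and the zero-free region only at the full level $Q=x^\theta$, never exploiting the weight $1/\varphi(q^*)$. With the density bound $N^*(\sigma,T)\ll_{\pi,\epsilon}(x^{\theta n}T)^{9n(n+1)(1-\sigma)+\epsilon}$ and the zero-free region $1-\sigma\geq c_\pi/\log(x^\theta T)$, the saving from the zero nearest to $s=1$ is $x^{-c_\pi/\log(x^\theta T)}$; for bounded $T$ this is $e^{-c_\pi/\theta}$, a \emph{constant} depending only on $\theta$ and $\pi$. Integration in $\sigma$ then yields at best $O_\pi(x)$, not $x/(\log x)^B$. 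What is missing is Gallagher's dyadic decomposition of the conductor range: for $q^*\sim R$ one extracts the factor $1/\varphi(q^*)\asymp(\log R)/R$ and applies \emph{both} \cref{thm:ZDE} and \cref{thm:ZFR} at level $R$ rather than $Q$. For small $R$ the zero-free region saves $e^{-c\log x/\log R}$; for large $R$ the factor $1/R$ compensates. Optimizing over $R\in[3,x^\theta]$ gives $e^{-c'\sqrt{\log x}}\ll(\log x)^{-B}$. The paper carries this out in \eqref{eqn:reduc_2}--\eqref{eqn:reduc_6}; without it the argument stalls at a bounded saving.

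Two smaller points. First, the explicit-formula error $O_\pi((\log x)^{C''})$ after truncating zeros at height $T=(\log x)^{C'}$ is false: the tail $\sum_{|\gamma|>T} y^\beta/|\rho|^{k+1}$ is of size $\asymp x/T^k$, not polylogarithmic, and summed over $\ll Q^2$ characters this overwhelms the main term. Your later remark that the dyadic sum in $|\im\rho|$ converges by the choice of $k$ suggests you in fact intend to keep the full zero sum (as the paper does); if so, drop the polylogarithmic truncation. Second, in the Siegel case with $q_1>(\log x)^{C_B}$ the assertion that ``the standard zero-free region applies to $L(s,\pi\times(\tilde\pi\otimes\chi_1))$'' is wrong by definition of the exceptional character, but this is harmless: the saving there comes solely from $\sum_{q_1\mid q\leq x^\theta}\varphi(q)^{-1}\ll(\log x)/\varphi(q_1)$, which you correctly identify.
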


\begin{proof}[\cref{prop:BV} implies \cref{thm:BV}]
	It follows from \cref{prop:BV} that
	\[
	\sum_{\substack{q\leq x^{\theta} \\ \gcd(q,q_{\pi})=1}}\max_{\gcd(a,q)=1}\max_{y\leq x}|r_0(y;q,a)|\ll_{\pi,B} \lambda x\log x+\frac{1}{\lambda^{2^k-1}}\frac{x}{(\log x)^B}.
	\]
	To finish, we choose $B=2^k(A+1)-1$ and $\lambda = (\log x)^{-(B+1)/2^k}$.
\end{proof}

\subsection{Proof of \texorpdfstring{\cref{prop:BV}}{Proposition \ref*{prop:BV}}}

Let $\pi\in\mathfrak{F}_n$, $k=9n^2+1$, $Q= x^{\theta}$ for some fixed $0<\theta<1/(9n^3)$, and $q\leq Q$.  We have the decomposition
\[
\frac{1}{k!}\sum_{\substack{m\leq y \\ m\equiv a\pmod{q}}}|a_{\pi}(m)|^2\Lambda(m)\Big(\log\frac{y}{m}\Big)^k =\frac{1}{k!}\sum_{\psi\pmod{q}}\frac{\bar{\psi}(a)}{\varphi(q)}\sum_{\substack{y\leq m}}|a_{\pi}(m)|^2\Lambda(m)\psi(n)\Big(\log\frac{y}{m}\Big)^k.
\]
By \cref{lem:convenient_factorization} and Mellin inversion, this equals
\[
\frac{1}{\varphi(q)}\sum_{\psi\pmod{q}}\sum_{\substack{\textup{$\chi$ primitive} \\ \textup{$\chi$ induces $\psi$}}}\frac{\bar{\chi}(a)}{2\pi i}\int_{3-i\infty}^{3+i\infty}\Big(-\frac{\Lambda_{\chi}'}{\Lambda_{\chi}}(s)+H_{\pi}(s;\chi,\psi)\Big)\frac{y^s}{s^{k+1}}ds.
\]
Since a primitive character $\chi\pmod{q}$ induces characters to moduli that are a multiple of $q$, it follows from the bound $\sum_{f\leq Q,~q|f}\varphi(f)^{-1}\ll\frac{\log Q}{\varphi(q)}$ that
\begin{equation}
	\label{eqn:reduc_1}
	\begin{aligned}
		&\sum_{\substack{q\leq Q \\ \gcd(q,q_{\pi})=1}}~\max_{\gcd(a,q)=1}~\max_{y\leq x}\Big|\frac{1}{k!}\sum_{\substack{m\leq y\\ m\equiv a\pmod{q}}}|a_{\pi}(m)|^2\Lambda(m)\Big(\log\frac{y}{m}\Big)^k- \frac{y}{\varphi(q)}\Big|\\
&\ll  \sum_{\substack{q\leq Q \\ \gcd(q,q_{\pi})=1}}\frac{\log Q}{\varphi(q)}~\sum_{\substack{\chi\pmod{q} \\ \textup{$\chi$ primitive}}}\max_{y\leq x}\Big|\frac{1}{2\pi i}\int_{3-i\infty}^{3+i\infty}\Big(-\frac{\Lambda_{\chi}'}{\Lambda_{\chi}}(s)+H_{\pi}(s;\chi,\chi)\Big)\frac{y^s}{s^{k+1}}ds\Big|.
	\end{aligned}
\end{equation}

Observe that by \cref{lem:convenient_factorization} and our range of $\theta$, \eqref{eqn:reduc_1} equals
\begin{equation}
\label{eqn:reduc_11}
\begin{aligned}
	& \sum_{\substack{q\leq Q \\ \gcd(q,q_{\pi})=1}}\frac{\log Q}{\varphi(q)}~\sum_{\substack{\chi\pmod{q} \\ \textup{$\chi$ primitive}}}\max_{y\leq x}\Big|-\sum_{\substack{L_{\chi}(\rho)=0 }}\frac{y^{\rho}}{\rho^{k+1}}+\frac{1}{2\pi i}\int_{1-\frac{1}{n^2}-i\infty}^{1-\frac{1}{n^2}+i\infty}H_{\pi}(s;\chi,\chi)\frac{y^s}{s^{k+1}}ds\Big|\\
	&\ll_{\pi,B}\log Q \sum_{\substack{q\leq Q \\ \gcd(q,q_{\pi})=1}}\frac{1}{\varphi(q)}~\sum_{\substack{\chi\pmod{q} \\ \textup{$\chi$ primitive}}}\sum_{\substack{L_{\chi}(\rho)=0 }}\frac{y^{\beta}}{|\rho|^{k+1}}+\frac{x}{(\log x)^B},
\end{aligned}
\end{equation}
where $\rho=\beta+i\gamma$ ranges over the nontrivial zeros of $L_{\chi}(s)$.  In light of the bounds $\frac{1}{q} \leq \frac{1}{\varphi(q)}\ll \frac{\log(eq)}{q}$, we dyadically decompose $[1,Q]$ into $O(\log Q)$ subintervals and find that \eqref{eqn:reduc_11} is
\begin{equation}
\label{eqn:reduc_2}
\begin{aligned}
&\ll_{\pi,B} x(\log Q)^2\sup_{3\leq R\leq Q}\sum_{\substack{q\leq R \\ \gcd(q,q_{\pi})=1}}\frac{1}{\varphi(q)}~\sum_{\substack{\chi\pmod{q} \\ \textup{$\chi$ primitive}}}~\sum_{\substack{\rho=\beta+i\gamma  }}\frac{x^{\beta-1}}{|\rho|^k}+\frac{x}{(\log x)^B}\\
&\ll_{\pi,B} x(\log Q)^3\sup_{3\leq R\leq x^{\theta}}\frac{1}{R}\sum_{\substack{q\leq R \\ \gcd(q,q_{\pi})=1}}~\sum_{\substack{\chi\pmod{q} \\ \textup{$\chi$ primitive}}}~\sum_{\substack{\rho=\beta+i\gamma  }}\frac{x^{\beta-1}}{|\rho|^k}+\frac{x}{(\log x)^B}\\
&\ll_{\pi,B} x(\log x)^3\sup_{3\leq R\leq x^{\theta}}\frac{1}{R}\Big(\frac{x^{\beta_1-1}}{\beta_1^k}+\sum_{\substack{q\leq R \\ \gcd(q,q_{\pi})=1}}~\sum_{\substack{\chi\pmod{q} \\ \textup{$\chi$ primitive}}} \sum_{\substack{\rho=\beta+i\gamma\neq\beta_1  }}\frac{x^{\beta-1}}{|\rho|^k}\Big)+\frac{x}{(\log x)^B}.
\end{aligned}	
\end{equation}
where $\beta_1$ is the exceptional zero in \cref{thm:ZFR}.  The term $x^{\beta_1-1}/\beta_1^k$ is omitted if $\beta_1$ does not exist.

If $\beta_1$ exists as in \cref{thm:ZFR} and the supremum is achieved when $R\leq (\log x)^{4B}$, then we apply \cref{thm:ZFR} with $\epsilon=\frac{1}{8B}$ and conclude that the contribution from such a zero is absorbed in our error term.  If the supremum is achieved when $R>(\log x)^{4B}$, then contribution from $\beta_1$ is trivially absorbed in our error term.  Hence \eqref{eqn:reduc_2} is
\begin{equation}
\label{eqn:reduc_3}
\ll_{\pi,B} x(\log x)^3\sup_{3\leq R\leq x^{\theta}}\frac{1}{R} \sum_{\substack{q\leq R \\ \gcd(q,q_{\pi})=1}}~\sum_{\substack{\chi\pmod{q} \\ \textup{$\chi$ primitive}}}\sum_{\substack{\rho=\beta+i\gamma\neq\beta_1 }}\frac{x^{\beta-1}}{|\rho|^k}+\frac{x}{(\log x)^B}.	
\end{equation}

Let us now consider the zeros $\rho $ with $|\rho |<\frac{1}{4}$.  The number of such zeros is $\ll R^2\log R$.  From the consideration of the corresponding zeros $1-\rho$ of $L_{\bar{\chi}}(s)$, we deduce that $|\rho |\gg x^{-1/(4k)}$.  Thus, the contribution from these zeros is $\ll R x^{\frac{1}{4}+\frac{k}{4k}}\log R\ll Q x^{\frac{1}{2}}\log Q \ll_{B} x(\log x)^{-B}$.  Define $T_0=0$ and $T_j=2^{j-1}$ for $j\geq 1$.  The above discussion shows that \eqref{eqn:reduc_3} is
\begin{equation}
	\label{eqn:reduc_4}
	\begin{aligned}
	&\ll_{\pi,B} x(\log x)^3\sup_{3\leq R\leq x^{\theta}}\frac{1}{R}\sum_{\substack{q\leq R \\ \gcd(q,q_{\pi})=1}}~\sum_{\substack{\chi\pmod{q} \\ \textup{$\chi$ primitive}}} \sum_{\substack{\rho=\beta+i\gamma\neq \beta_1  \\ |\rho|>\frac{1}{4}}}\frac{x^{\beta-1}}{|\rho|^k}+\frac{x}{(\log x)^B}\\
	&\ll_{\pi,B} x(\log x)^3\sup_{3\leq R\leq x^{\theta}}\frac{1}{R}\sum_{j=1}^{\infty}~\sum_{\substack{q\leq R \\ \gcd(q,q_{\pi})=1}}~\sum_{\substack{\chi\pmod{q} \\ \textup{$\chi$ primitive}}}~\sideset{}{'}\sum_{\substack{\rho=\beta+i\gamma\neq\beta_1  \\ |\rho|\geq\frac{1}{4} \\ T_{j-1}\leq|\gamma|\leq T_j}}\frac{x^{\beta-1}}{|\rho|^k}+\frac{x}{(\log x)^B}.
	\end{aligned}
\end{equation}

If $|\rho|\geq \frac{1}{4}$ and $T_{j-1}\leq|\gamma|\leq T_j$, then $|\rho|\geq \max\{T_{j-1},\frac{1}{4}\}\geq T_j/4$ and $|\rho|\gg |\gamma|+3$.  Therefore, if $\delta = \min\{1-9n^3\theta,\tfrac{1}{2}\}$, then
\[
x^{\beta-1}|\rho|^{-k}\ll T_j^{-\frac{1}{2}}(|\gamma|+1)^{-\frac{1}{2}}x^{-(1-\beta)\delta}(x^{1-\delta}T_j^{k-1})^{-(1-\beta)}\ll T_j^{-\frac{1}{2}}(|\gamma|+1)^{-\frac{1}{2}}x^{-(1-\beta)\delta}(R^{\frac{1-\delta}{\theta}}T_j^{k-1})^{\beta-1}.
\]
Since $\rho=\beta+i\gamma\neq\beta_1$, it follows from \cref{thm:ZFR} that
\[
(|\gamma|+1)^{-\frac{1}{2}}x^{-\delta(1-\beta)}\leq e^{-\delta \eta_{\pi}(x,R)},\qquad \eta_{\pi}(x,R) \coloneqq  \inf_{t\geq 3}\Big[c_{\pi}\frac{\log x}{\log(Rt)}+\log t\Big],
\]
so \eqref{eqn:reduc_4} is
\[
\ll_{\pi,B}x(\log x)^3 \sup_{3\leq R\leq x^{\theta}}\frac{e^{-\delta \eta(x,R)}}{R}\sum_{j=1}^{\infty}T_j^{-\frac{1}{2}}\sum_{\substack{q\leq R \\ \gcd(q,q_{\pi})=1}}~\sum_{\substack{\chi\pmod{q} \\ \textup{$\chi$ primitive}}}\sum_{\substack{\rho=\beta+i\gamma\neq \beta_1\\ |\gamma|\leq T_j}}(R^{\frac{1-\delta}{\theta}}T_j^{k-1})^{\beta-1}+\frac{x}{(\log x)^B}.
\]

Define
\[
N_{\pi}^*(\sigma,T,R)=\sum_{\substack{q\leq R \\ \gcd(q,q_{\pi})=1}} \sum_{\substack{\chi\bmod q \\ \textup{$\chi$ primitive}}}|\{\rho=\beta+i\gamma\colon \beta\geq\sigma,~|\gamma|\leq T,~L_{\chi}(\rho)=0\}|.
\]
By \eqref{eqn:BH}, there exists an effectively computable constant $\Cl[abcon]{const_family_scale}=\Cr{const_family_scale}(n)>0$ such that
\[
\{\tilde{\pi}\otimes\chi\colon \textup{$ \chi\pmod{q_{\chi}}$ primitive, $\gcd(q_{\chi},q_{\pi})=1$, $q_{\chi}\leq Q$}\}\subseteq\mathfrak{F}_{n}(\Cr{const_family_scale}C(\pi)Q^n).
\]
Thus, by \eqref{eqn:ZDE_Q}, we have $N_{\pi}^*(\sigma,T,R)\ll_{\pi,\epsilon}(R^n T)^{9n^2(1-\sigma)+\epsilon}$. Partial summation yields
\begin{equation}
	\label{eqn:zerobound}
	\begin{aligned}
	&\sum_{\substack{q\leq R \\ \gcd(q,q_{\pi})=1}}~\sum_{\substack{\chi\pmod{q} \\ \textup{$\chi$ primitive}}}\sum_{\substack{\rho=\beta+i\gamma\neq \beta_1\\ |\gamma|\leq T_j}}(R^{\frac{1-\delta}{\theta}}T_j^{k-1})^{-(1-\beta)}\\
&\ll (R^{\frac{1-\delta}{\theta}}T_j^{k-1})^{-1}N_{\pi}^*(0,T_j,R)+\log(RT_j)\int_0^1 (R^{\frac{1-\delta}{\theta}}T_j^{k-1})^{-\sigma}N_{\pi}^*(1-\sigma,T_j,R)d\sigma\\
&\ll_{\pi,\epsilon} (R T_j)^{\epsilon}\Big((R^{\frac{1-\delta}{\theta}}T_j^{k-1})^{-1}R T_j+\int_0^1 (R^{\frac{1-\delta}{\theta}}T_j^{k-1})^{-\sigma}(R^n T_j)^{9n^2\sigma}d\sigma\Big).	
	\end{aligned}
\end{equation}
Our choices for $\theta$, $k$, and $\delta$ ensure that \eqref{eqn:zerobound} is $\ll_{\pi,\epsilon} (R T_j)^{\epsilon}$.  Thus, \eqref{eqn:reduc_4} is
\begin{equation}
	\label{eqn:reduc_6}
	\begin{aligned}
	&\ll_{\pi,B,\epsilon} x(\log x)^3 \sup_{3\leq R\leq x^{\theta}}e^{-\delta \eta_{\pi}(x,R)}\frac{R^{\epsilon}}{R}\sum_{j=1}^{\infty}T_j^{-\frac{1}{2}+\epsilon}+\frac{x}{(\log x)^B}\\
	&\ll_{\pi,B,\epsilon} x(\log x)^3 \sup_{3\leq R\leq x^{\theta}}e^{-\delta \eta_{\pi}(x,R)}\frac{R^{\epsilon}}{R}+\frac{x}{(\log x)^B}.
	\end{aligned}
\end{equation}
A small calculation (cf.\ \cite[Section 4]{TZ3}) shows that there is a constant $\Cl[abcon]{last_step_BV}=\Cr{last_step_BV}(\pi)>0$ such that $e^{-\delta\eta_{\pi}(x,R)}\ll_{\pi,B,\epsilon} \exp(-\Cr{last_step_BV}\frac{\log x}{\log R})+\exp(-\Cr{last_step_BV}\sqrt{\log x})$, and \cref{thm:BV} follows.

\subsection{Proof of \texorpdfstring{\cref{thm:ZFR}}{Theorem \ref*{thm:ZFR}}}

Although the proof of \cref{thm:ZFR} contains only standard techniques, such zero-free regions for $L_{\chi}(s)$ are new even in the case when $\chi$ is trivial.  The case when $\chi$ is trivial was only recently handled unconditionally in \cite{HT}.

\begin{lemma}\cite[Theorem 2.1(1)]{HT}
\label{lem:ZFR_HT}
	Let $\pi\in\mathfrak{F}_n$.  There exists an absolute and effectively computable constant $\Cl[abcon]{ZFR_HT}>0$ such that $L_{1}(s)$ has at most one zero, say $\beta_1$, in the region $\re(s)\geq 1-\Cr{ZFR_HT}/\log(C(\pi)^n(|\im(s)|+e)^{n^2})$.  If $\beta_1$ exists, then it must be real and simple, and there exists an absolute and effectively computable constant $\Cl[abcon]{Siegel_HT}$ such that $\beta_1\leq 1-C(\pi)^{-\Cr{Siegel_HT}n}$.
\end{lemma}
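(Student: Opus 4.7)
The plan is to adapt the classical Landau/de la Vallée Poussin approach to the family $\{L_\chi(s)=L(s,\pi\times(\tilde{\pi}\otimes\chi))\}_{\chi}$, using \cref{lem:convenient_factorization} as the substitute for the usual identity relating log-derivatives to Dirichlet series with non-negative coefficients. For a primitive $\chi\pmod{q}$ with $\gcd(q,q_\pi)=1$, the pointwise inequality $2(1+\cos\psi)^2=3+4\cos\psi+\cos 2\psi\geq 0$ applied termwise to $|a_\pi(m)|^2\Lambda(m)$ (with $\chi(m)m^{-it}=|\chi(m)|e^{i\psi}$) gives, for $\sigma>1$,
\[
3\sum_{m}\tfrac{|a_\pi(m)|^2\Lambda(m)}{m^\sigma}+4\Re\sum_{m}\tfrac{|a_\pi(m)|^2\chi(m)\Lambda(m)}{m^{\sigma+it}}+\Re\sum_{m}\tfrac{|a_\pi(m)|^2\chi^2(m)\Lambda(m)}{m^{\sigma+2it}}\geq 0.
\]
Substituting \cref{lem:convenient_factorization}(3) for each sum (with $\psi$ equal to the trivial character, $\chi$, and $\chi^2$ viewed as induced from its primitive companion), then using Hadamard factorization of $\Lambda_\psi$, the functional equation, and Stirling to express each $-\Re\Lambda'_\psi/\Lambda_\psi$ as $O_\pi(\log(q(|t|+3)))-\sum_{\rho}\Re\frac{1}{s-\rho}$, yields the master inequality.

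Assume $L_\chi(\beta+i\gamma)=0$ with $0<\beta<1$; specialize $t=\gamma$ and retain only the single zero $\rho_0=\beta+i\gamma$ in the middle term. The master inequality becomes
\[
\frac{4}{\sigma-\beta}\leq\frac{3}{\sigma-1}+\delta(\chi^2)\,\frac{\sigma-1}{(\sigma-1)^2+4\gamma^2}+A_\pi\log\bigl(Q(|\gamma|+3)\bigr).
\]
If $\chi^2$ is nontrivial, i.e.\ $\chi$ is not quadratic, then $\delta(\chi^2)=0$ and the usual choice $\sigma-1=c/\log(Q(|\gamma|+3))$ with $c$ small extracts $1-\beta\gg 1/\log(Q(|\gamma|+3))$. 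If $\chi$ is quadratic, the extra term is bounded by $\frac{1}{4(\sigma-1)}$, producing $\frac{4}{\sigma-\beta}\leq\frac{13}{4(\sigma-1)}+O(\log\cdots)$, which still yields a zero-free region unless $\gamma=0$: the bound loses its sting only when the zero is real. A putative double real zero gives a factor of $2/(\sigma-\beta)$ on the left and is ruled out by the same inequality, so any exceptional zero is real and simple, and the associated character is real (hence quadratic).

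To prove uniqueness, suppose two distinct primitive quadratic characters $\chi_1\pmod{q_1}$ and $\chi_2\pmod{q_2}$ both yield exceptional real zeros $\beta_1,\beta_2$ close to $1$. The inequality $(1+\chi_1(m))(1+\chi_2(m))\geq 0$ gives a nonnegative sum of four Dirichlet series; applying \cref{lem:convenient_factorization} once to each, invoking the zero-free region already established for $L_{\chi_1\chi_2}$ (a nontrivial quadratic twist, covered by Case B above), and isolating the two hypothetical zeros produces
\[
\frac{1}{\sigma-\beta_1}+\frac{1}{\sigma-\beta_2}\leq\frac{1}{\sigma-1}+O_\pi(\log Q).
\]
Setting $\sigma=1+c/\log Q$ contradicts $\beta_1,\beta_2\geq 1-c'/\log Q$ once $c'$ is small enough, forcing at most one exceptional character. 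The Siegel-type bound $\beta_1\leq 1-c_\pi(\epsilon)q_1^{-\epsilon}$ then follows from the classical ineffective argument: comparing the hypothetical $\chi_1$ against any auxiliary real primitive $\chi_2$ and exploiting positivity of the Dirichlet series of $L(s,\pi\times\tilde\pi)L_{\chi_1}(s)L_{\chi_2}(s)L_{\chi_1\chi_2}(s)$, together with the bound \eqref{eqn:preconvex} to control the global size, gives the ineffective Siegel lower bound on $L_{\chi_1}(1)$, and thence on $1-\beta_1$ through the mean value theorem applied between $\beta_1$ and $1$.

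The main obstacle is controlling the extra pole contribution $\delta(\chi^2)(\sigma-1)/((\sigma-1)^2+4\gamma^2)$ in the quadratic-$\chi$ case: it essentially halves the gap between the left- and right-hand sides of the master inequality, making it imperative that the constant absorbed into $A_\pi$ from the archimedean and ramified Euler factors is explicit enough, and it is only the sharpness of \cref{lem:convenient_factorization} (holomorphic and only logarithmically large in a genuine left half-plane $\Re s\geq 1-1/n^2$) that permits honest control of the error term independently of $\pi$'s internal structure. The Siegel step remains ineffective as usual.
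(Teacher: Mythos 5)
Your proposal proves the wrong statement. \cref{lem:ZFR_HT} concerns the single untwisted $L$-function $L(s,\pi\times\tilde{\pi})$, with a zero-free region depending only on $C(\pi)$ and $|\im(s)|$, and with an \emph{effective} exceptional-zero bound $\beta_1\leq 1-C(\pi)^{-\Cr{Siegel_HT}n}$; in the paper it is simply imported from \cite[Theorem 2.1(1)]{HT}. What you sketch is instead a zero-free region for the character-twisted family $L_{\chi}(s)=L(s,\pi\times(\tilde{\pi}\otimes\chi))$ as $\chi$ ranges over primitive Dirichlet characters modulo $q\leq Q$, with uniqueness of an exceptional \emph{character} and an \emph{ineffective} Siegel bound $\beta_1\leq 1-c_{\pi}(\epsilon)q^{-\epsilon}$ in the $q$-aspect; that is essentially the content of \cref{thm:ZFR} (proved in Section 9 via \cref{lem:convenient_factorization}, \cref{lem:GHL}, \cref{lem:ZFR_twist}, \cref{prop:Siegel_pre}, \cref{cor:Siegel}, and \cref{lem:page}), not of \cref{lem:ZFR_HT}. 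Your uniqueness step compares two distinct characters and has no analogue for the single $L$-function in the lemma, and your Siegel step produces the wrong kind of bound: ineffective and in terms of the twisting modulus rather than the effective bound in terms of $C(\pi)$ that the lemma asserts.

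Moreover, even read as a proof of the twisted statement, the key analytic difficulty is glossed over, and it is exactly the difficulty that specializing to the trivial (or a quadratic) character exposes. In the $3$--$4$--$1$ inequality the "$\chi^2$" term carries a pole at $s=1-2i\gamma$ when $\chi^2$ is trivial; your claim that its contribution is at most $\tfrac{1}{4(\sigma-1)}$ fails precisely in the regime $|\gamma|\ll\sigma-1$, where it approaches $\tfrac{1}{\sigma-1}$ and the inequality $\tfrac{4}{\sigma-\beta}\leq\tfrac{3}{\sigma-1}+\tfrac{1}{\sigma-1}+O(\log(\cdot))$ yields nothing; saying the bound "loses its sting only when the zero is real" does not dispose of small nonzero $\gamma$. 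The paper's treatment of the twisted case avoids this by applying the Landau-type \cref{lem:GHL} to the isobaric representation $\Pi=\pi\boxplus\pi\otimes\chi\otimes|\det|^{i\gamma}\boxplus\pi\otimes\bar{\chi}\otimes|\det|^{-i\gamma}$, counting real zeros of $L(s,\Pi\times\tilde{\Pi})$ against its pole order, rather than running the raw de la Vall\'ee Poussin inequality. For the untwisted lemma itself the obstruction is even more serious: when $\pi\neq\tilde{\pi}$ the standard zero-free region for $L(s,\pi\times\tilde{\pi})$ resisted the classical argument (the paper states it was only recently proved in \cite{HT}), so a textbook $3$--$4$--$1$ sketch cannot be expected to recover \cref{lem:ZFR_HT}, and in particular it does not produce the simplicity statement or the effective bound $\beta_1\leq 1-C(\pi)^{-\Cr{Siegel_HT}n}$.
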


We prove the corresponding result for $L_{\chi}(s)$.  The ideas in \cite{HT} inform our approach here

\begin{lemma}
\label{lem:ZFR_twist}
Let $\pi\in\mathfrak{F}_{n}$. Let $\chi\pmod{q}$ be a nontrivial primitive Dirichlet character such that $\gcd(q,q_{\pi})=1$.  There exists an effectively computable constant $\Cl[abcon]{ZFR_BV_1}=\Cr{ZFR_BV_1}(n)>0$ such that $L_{\chi}(s)$ has at most one zero, say $\beta_1$, in the region
\begin{equation}
\label{eqn:ZFR_chi}
\re(s)\geq 1-\frac{\Cr{ZFR_BV_1}}{\log(qC(\pi)(|\im(s)|+3))}.
\end{equation}
If the exceptional zero $\beta_1$  exists, then it is real and simple, and $\chi$ is quadratic.
\end{lemma}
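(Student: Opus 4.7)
\emph{Plan.} The proof combines two tools: \cref{lem:GHL} applied to the auxiliary isobaric sum $\Pi \coloneqq \pi \boxplus (\pi \otimes \chi)$, and a de la Vall\'ee-Poussin argument based on the identity $3 + 4\cos\theta + \cos 2\theta = 2(1+\cos\theta)^2 \geq 0$. Since $\chi$ is nontrivial primitive with $\gcd(q,q_\pi)=1$, the arithmetic conductor of $\pi \otimes \chi$ is $q_\pi q^n \neq q_\pi$, so $\pi \neq \pi \otimes \chi$; hence
\[
L(s,\Pi \times \widetilde\Pi) = L(s,\pi \times \widetilde\pi)^2 L_\chi(s) L_{\bar\chi}(s)
\]
has a double pole at $s=1$ and nonnegative log-derivative Dirichlet coefficients.

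I first handle real zeros. \cref{lem:GHL} gives at most two real zeros, with multiplicity, of $L(\sigma,\Pi \times \widetilde\Pi)$ in a standard region whose width is $\asymp 1/\log(qC(\pi))$. By \cref{lem:ZFR_HT}, $L(s,\pi\times\widetilde\pi)$ has at most one simple real zero in this region; if present, it accounts for both of the allowed real zeros of $L(\Pi \times \widetilde\Pi)$, rendering the claim vacuous for $L_\chi$. Otherwise, when $\chi$ is nonquadratic, $L_{\bar\chi}(s) = \overline{L_\chi(\bar{s})}$ is distinct from $L_\chi$ and each real zero of $L_\chi$ is matched by a real zero of $L_{\bar\chi}$ at the same point, so $L_\chi$ has at most one simple real zero. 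When $\chi$ is quadratic, $L_\chi = L_{\bar\chi}$, so a real zero of $L_\chi$ of multiplicity $m$ accounts for $2m \leq 2$ real zeros of $L(\Pi \times \widetilde\Pi)$, forcing $m \leq 1$.

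To exclude nonreal zeros (and also all real zeros in the nonquadratic case), suppose $L_\chi(\beta_0 + it_0) = 0$. The prime-power identity $a_{\pi \times (\widetilde\pi \otimes \chi^j)}(p^k) = |a_\pi(p^k)|^2 \chi(p)^{jk}$ at unramified primes, combined with the trigonometric inequality, yields
\[
-3\Re\frac{L'}{L}(\sigma, \pi \times \widetilde\pi) - 4\Re\frac{L_\chi'}{L_\chi}(\sigma + it_0) - \Re\frac{L'}{L}(\sigma + 2it_0, \pi \times (\widetilde\pi \otimes \chi^2)) \geq 0
\]
for $\sigma > 1$ (up to bounded ramified-prime contributions, handled exactly as in \cref{lem:convenient_factorization}). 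Applying the Hadamard product to each factor, isolating the pole at $s = 1$ of $L(\cdot, \pi \times \widetilde\pi)$ and the zero at $\beta_0 + it_0$ of $L_\chi$, and discarding the nonnegative contributions from all other zeros, yields
\[
\frac{4}{\sigma - \beta_0} \leq \frac{3}{\sigma - 1} + R(\sigma, t_0) + A \log(qC(\pi)(|t_0| + 3)),
\]
where $R(\sigma, t_0) = 0$ if $\chi^2 \neq 1$ and $R(\sigma, t_0) = (\sigma - 1)/((\sigma - 1)^2 + 4t_0^2)$ if $\chi^2 = 1$, reflecting whether the third factor is entire or inherits the simple pole of $L(s, \pi \times \widetilde\pi)$ at $s = 1$. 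Optimizing $\sigma = 1 + c(1 - \beta_0)$ for a suitable $c$: in the nonquadratic case $R \equiv 0$ forces $1 - \beta_0 \gg 1/\log(qC(\pi)(|t_0|+3))$, precluding any zero in the claimed region; in the quadratic case the same bound holds whenever $|t_0|$ exceeds $1/\log(qC(\pi))$, while for yet smaller $|t_0|$ one additionally isolates the conjugate zero $\beta_0 - it_0$ of $L_\chi$ (boosting the coefficient of $1/(\sigma - \beta_0)$ from $4$ to effectively $8$) to still recover the same region.

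It remains to verify $L_\chi(1) > 0$ when $\chi$ is quadratic. The symmetry of Rankin--Selberg $L$-functions gives $L(s, \pi \times (\widetilde\pi \otimes \chi)) = L(s, \widetilde\pi \times (\pi \otimes \chi))$; combined with $\bar\chi = \chi$, this forces the Dirichlet coefficients of $L_\chi$ to be real, so $L_\chi$ is real-valued on $(0,\infty)$. Since $L_\chi(s) \neq 0$ for $\Re(s) > 1$ and $L_\chi(\sigma) \to 1$ as $\sigma \to \infty$, $L_\chi(\sigma) > 0$ on $(1, \infty)$, so $L_\chi(1) \geq 0$ by continuity. Comparing leading Laurent coefficients in $L(s, \Pi \times \widetilde\Pi) = L(s, \pi \times \widetilde\pi)^2 L_\chi(s)^2$ at $s = 1$ then shows $L_\chi(1)^2$ equals a positive ratio, hence $L_\chi(1) \neq 0$ and thus $L_\chi(1) > 0$. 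The main obstacle will be the quadratic case with $|t_0|$ extremely small: the pole at $s = 1$ of the third factor $L(s, \pi \times \widetilde\pi)$ threatens to cancel the savings from the zero of $L_\chi$, and careful simultaneous isolation of the conjugate zero together with invocation of \cref{lem:ZFR_HT} to control any exceptional zero of $L(s, \pi \times \widetilde\pi)$ in the same neighborhood is the delicate bookkeeping needed to keep all constants uniform in $q$ and $\Im(s)$.
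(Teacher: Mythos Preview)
Your approach is correct but differs substantially from the paper's. The paper handles all cases uniformly via \cref{lem:GHL} applied to a \emph{three}-term isobaric sum: for a zero $\rho=\beta+i\gamma$ with $\gamma\neq 0$ or $\chi$ non-real, it sets $\Pi=\pi\boxplus(\pi\otimes\chi\otimes|\det|^{i\gamma})\boxplus(\pi\otimes\bar\chi\otimes|\det|^{-i\gamma})$, so that $L(s,\Pi\times\tilde\Pi)$ has a triple pole at $s=1$ but (since $L_{\bar\chi}(\bar\rho)=0$ as well) a real zero at $s=\beta$ of order at least $4$, contradicting \cref{lem:GHL}. For $\gamma=0$ and $\chi$ quadratic this collapses to $L(\sigma,\Pi\times\tilde\Pi)=L(\sigma,\pi\times\tilde\pi)^5 L_\chi(\sigma)^4$, which has pole order $5$, so $L_\chi$ can have at most one simple real zero. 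For $L_\chi(1)>0$, the paper notes that $L(s,\pi\times\tilde\pi)L_\chi(s)$ has nonnegative log-Dirichlet coefficients and hence positive residue at $s=1$; this residue equals $L_\chi(1)\cdot\mathop{\mathrm{Res}}_{s=1}L(s,\pi\times\tilde\pi)$.

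Your route via the two-term sum $\pi\boxplus(\pi\otimes\chi)$ for real zeros together with the classical $3$--$4$--$1$ trigonometric inequality for the rest also works. Its advantage is familiarity; its cost is precisely the case analysis you flag at the end. Two remarks: (i) your real-zero step does not actually require \cref{lem:ZFR_HT}---each real zero of either $L(s,\pi\times\tilde\pi)$ or $L_\chi(s)$ contributes with even multiplicity to $L(\sigma,\pi\times\tilde\pi)^2 L_\chi(\sigma)L_{\bar\chi}(\sigma)$, so \cref{lem:GHL} alone already forces their combined count to be at most one; (ii) the quadratic small-$|t_0|$ case, while doable via your conjugate-zero trick (the bookkeeping does close, with $\eta$ chosen as a suitable large multiple of $1-\beta_0$), is exactly where the paper's three-term construction pays dividends, replacing a delicate optimization with a clean order-of-pole contradiction.
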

\begin{proof}
Let $\chi\pmod{q}$ be a primitive nontrivial Dirichlet character, let $\psi$ be the primitive character that induces $\chi^2$, and let $\beta+i\gamma$ be a nontrivial zero of $L_{\chi}(s)$.  Define $\Pi_{\chi}=\pi\boxplus \pi\otimes\chi|\cdot|^{it}\boxplus\pi\otimes\bar{\chi}|\cdot|^{-it}$, and define
	\begin{equation}
	\label{eqn:aux_siegel}
	D(s)=L(s,\Pi_{\chi}\times\widetilde{\Pi}_{\chi})=L_{1}(s)^3 L_{\chi}(s+i\gamma)^2 L_{\bar{\chi}}(s-i\gamma)^2 L_{\psi}(s+2i\gamma)L_{\bar{\psi}}(s-2i\gamma).
	\end{equation}
The factor $L_1(s)^3$ has a pole of order 3 at $s=1$, and the hypothesis that $\gcd(q,q_{\pi})=1$ ensures that $L_{\chi}(s+i\gamma)^2 L_{\bar{\chi}}(s-i\gamma)^2$ is entire.  If $\psi$ is complex, then $L_{\psi}(s+2i\gamma)L_{\bar{\psi}}(s-2i\gamma)$ is entire; otherwise, it has poles of order 1 at $s=1\pm 2i\gamma$.  The additional poles when $\psi$ is real require some additional casework when $\gamma$ is close to zero.  For notational compactness, let $\mathcal{Q}_{\gamma}=qC(\pi)(|\gamma|+3)$.  Note that $-\frac{D'}{D}(s)$ has nonnegative Dirichlet coefficients per \cite[Lemma a]{Hoffstein}.

The functional equation for $L_{\chi}(s)$ together with the fact that $L(s,\pi\times\tilde{\pi})$ is a self-dual $L$-function (even if $L(s,\pi)$ itself is not self-dual) implies that if $\rho$ is a zero of $L_{\chi}(s)$, then $\bar{\rho}$ is a zero of $L_{\bar{\chi}}(s)$.  Thus, we have that
	\begin{equation}
	\label{eqn:order_beta}
	\mathop{\mathrm{ord}}_{s=\beta}D(s) \geq 4.
	\end{equation}
	
	Let $\omega$ denote a nontrivial zero of $D(s)$, $\delta(\psi)=1$ if $\psi$ is trivial, and $\delta(\psi)=0$ otherwise.  We apply \cref{lem:GHL} and \eqref{eqn:BH} to \eqref{eqn:aux_siegel}, concluding that if $1<\sigma<2$, then
	\begin{equation}
	\label{eqn:startpointzfr_1}
	\sum_{\omega}\re\Big(\frac{1}{\sigma-\omega}\Big)<\frac{3}{\sigma-1}+\delta(\psi)\frac{2(\sigma-1)}{(\sigma-1)^2+4\gamma^2}+\Cl[abcon]{zfrproof}\log\mathcal{Q}_{\gamma},
	\end{equation}
	where $\Cr{zfrproof}=\Cr{zfrproof}(n)>1$ is a suitable implied constant.  Since $\beta<1$ is, by hypothesis, one of the zeros in the sum in \eqref{eqn:startpointzfr_1}, we have by \eqref{eqn:order_beta} and nonnegativity that
	\begin{equation}
	\label{eqn:startingpoint_zfr}
	\frac{4}{\sigma-\beta}< \frac{3}{\sigma-1}+\delta(\psi)\frac{2(\sigma-1)}{4\gamma^2+(\sigma-1)^2}+\Cr{zfrproof}\log\mathcal{Q}_{\gamma}.
	\end{equation}
		
\subsubsection*{Case 1: Either $\chi$ is real and $|\gamma|\geq \frac{1}{7c_{22}\log\mathcal{Q}_0}$, or $\chi$ is complex}
	
	If $\sigma = 1+\frac{1}{5\Cr{zfrproof}\log \mathcal{Q}_{\gamma}}$, then
	\[
	\delta(\psi)\frac{2(\sigma-1)}{4\gamma^2+(\sigma-1)^2}\leq \frac{490}{149}\Cr{zfrproof}\log\mathcal{Q}_{\gamma}.
	\]
	Thus, \eqref{eqn:startingpoint_zfr} becomes
	\[
	\frac{4}{1+\frac{1}{5\Cr{zfrproof}\log\mathcal{Q}_{\gamma}}-\beta}\leq \frac{2874}{149}\Cr{zfrproof}\log\mathcal{Q}_{\gamma}.
	\]
	Upon solving for $\beta$, we conclude that $\beta\leq 1-\frac{1}{136\Cr{zfrproof}\mathcal{Q}_{\gamma}}$.
	
	\subsubsection*{Case 2: $\chi$ is real and $\gamma=0$}
	We start at \eqref{eqn:startpointzfr_1} with $\delta(\psi)=1$, $\gamma=0$, and $\sigma = 1+\frac{1}{3\Cr{zfrproof}\log\mathcal{Q}_0}$.  If there are $N$ zeros $\beta$ (with multiplicity) of $D(s)$ such that $\beta\geq 1-\frac{1}{96\Cr{zfrproof}\log\mathcal{Q}_0}$, then by \eqref{eqn:startpointzfr_1},
	\[
	\frac{32}{11}\Cr{zfrproof}N\log\mathcal{Q}_0=\frac{N}{\sigma-(1-\frac{1}{96\Cr{zfrproof}\log\mathcal{Q}_0})}\leq \frac{5}{\sigma-1}+\Cr{zfrproof}\log \mathcal{Q}_0=16\Cr{zfrproof}\log\mathcal{Q}_0.
	\]
	It follows that (since $N$ is an integer) $N\leq\lfloor\frac{11}{2}\rfloor=5$.  By the bound \eqref{eqn:order_beta}, $L_{\chi}(s)$ has at most one real zero $\beta$, necessarily simple, satisfying $\beta\geq 1-\frac{1}{96\Cr{zfrproof}\log\mathcal{Q}_0}$.
	
	\subsubsection*{Case 3: $\chi$ is quadratic and $0<|\gamma|<\frac{1}{7c_{22}\log \mathcal{Q}_0}$}
	We apply \cref{lem:GHL} to $L_{1}(s)L_{\chi}(s)$.  The only singularity is a simple pole at $s=1$.  Both $L_{1}(s)$ and $L_{\chi}(s)$ are self-dual, so if $\beta+i\gamma$ is a nontrivial zero of $L_1(s)L_{\chi}(s)=0$, then so is $\beta-i\gamma$.  By \eqref{eqn:LRS_decouple}, the $m$-th Dirichlet coefficient of $-\frac{L_{1}'}{L_{1}}(s)-\frac{L_{\chi}'}{L_{\chi}}(s)$ is $(1+\chi(m))a_{\pi\times\tilde{\pi}}(m)\Lambda(m)\geq 0$, so \cref{lem:GHL} yields
	\begin{equation}
	\label{eqn:depeche}
	2\frac{\sigma-\beta}{(\sigma-\beta)^2+\gamma^2}\leq\frac{1}{\sigma-1}+\Cr{zfrproof}\log \mathcal{Q}_{\gamma}.
	\end{equation}
	If $\sigma = 1+\frac{1}{2\Cr{zfrproof}\log\mathcal{Q}_{\gamma}}$ and $0<|\gamma|<\frac{1}{7\Cr{zfrproof}\log\mathcal{Q}_{0}}$ in \eqref{eqn:depeche}, then $\beta\leq 1-\frac{1}{8\Cr{zfrproof}\log\mathcal{Q}_{\gamma}}$.
	\end{proof}
	
We now prove a Siegel-type bound for $\beta_1$ in \cref{lem:ZFR_twist} (if it exists) using the ideas of  Hoffstein and Lockhart \cite{HL}.  This is new for all $\pi\in\mathfrak{F}_n$ with $n\geq 3$.  We begin with an auxiliary calculation.  Let $\chi\pmod{q}$ and $\chi'\pmod{q'}$ be distinct nontrivial quadratic Dirichlet characters such that $\gcd(q'q,q_{\pi})=1$, and let $\psi$ be the primitive character that induces $\chi'\chi$ (whose conductor is necessarily coprime to $q_{\pi}$).  These coprimality restrictions ensure that $\pi\neq \pi\otimes\chi$, $\pi\neq\pi\otimes\chi'$, $\pi\neq\pi\otimes\psi$,  $q_{\pi\otimes\chi}=q_{\pi}q^n$, $q_{\pi\otimes\chi'}=q_{\pi}(q')^n$, and $q_{\pi\otimes\psi}=q_{\psi}q^n$.  We also have that $q_{\psi}|q_{\chi}q_{\chi'}$.  Let
\begin{equation}
\label{eqn:Pi_def_Siegel}
L(s,\Pi^{\star})=L_{1}(s)L_{\chi}(s)L_{\chi'}(s)L_{\psi}(s).
\end{equation}
By the above discussion, $L(s,\Pi^{\star})$ is holomorphic apart from a simple pole at $s=1$.  It follows from \eqref{eqn:LRS_decouple} that if $k\geq 1$, then the $p^k$-th Dirichlet coefficient of $\log L(s,\Pi^{\star})$ equals
\[
k^{-1}a_{\pi\times\tilde{\pi}}(p^k)(1+\chi(p^k)+\chi'(p^k)+\psi(p^k))\geq 0.
\]
The nonnegativity of $1+\chi(p^k)+\chi'(p^k)+\psi(p^k)$ follows from the fact that this sum is a Dirichlet coefficient of the Dedekind zeta function of a biquadratic field.  Upon exponentiating, we find that the $m$-th Dirichlet coefficient $\lambda_{\Pi^{\star}}(m)$ of $L(s,\Pi^{\star})$ is nonnegative.

\begin{lemma}
	\label{lem:quad_nonneg}
	If $\pi\in\mathfrak{F}_n$ and $\chi$ is a primitive nontrivial real Dirichlet character, then $L_{\chi}(1)>0$ and the Dirichlet coefficients of $L_{1}(s)L_{\chi}(s)$ are nonnegative.
\end{lemma}
\begin{proof}
	Let $K$ be the quadratic field associated to $\chi$.  If $\pi_{\mathrm{BC}}$ is the base change of $\pi$ to an automorphic representation of $\mathrm{GL}_n(\mathbb{A}_K)$, then $L(s,\pi_{\mathrm{BC}}\times\tilde{\pi}_{\mathrm{BC}})=L_{1}(s)L_{\chi}(s)$.  Since $L(s,\pi_{\mathrm{BC}}\times\tilde{\pi}_{\mathrm{BC}})$ is holomorphic on $\mathbb{C}-\{1\}$ apart from a simple pole at $s=1$, the same holds for $L_{1}(s)L_{\chi}(s)$.  Since $\gcd(q,q_{\pi})=1$ (hence $L_{\chi}(s)$ is entire), the residue of $L(s,\pi_{\mathrm{BC}}\times\tilde{\pi}_{\mathrm{BC}})$ at $s=1$, which is positive, equals $L_{\chi}(1)\mathrm{Res}_{s=1}L_{1}(s)$.  Since $\mathrm{Res}_{s=1}L_{1}(s)>0$, it follows that $L_{\chi}(1)>0$.  The Dirichlet coefficients of  $L_{1}(s)L_{\chi}(s)$ are nonnegative per Case 3 in the proof of \cref{lem:ZFR_twist}.
\end{proof}

Let $0<\epsilon<1$, and let $\beta\in(1-\epsilon,1)$.  By \eqref{eqn:preconvex} and the above discussion, we have
	\begin{equation}
	\label{eqn:half_bound}
	\begin{aligned}
	L(\tfrac{1}{2}+it,\Pi^{\star})&\ll_{\pi,\epsilon} (q'q)^{\frac{n^2}{2}+\epsilon}(3+|t|)^{n^2+\epsilon},\\
	\mathop{\mathrm{Res}}_{s=1-\beta}L(s+\beta,\Pi^{\star})x^s\Gamma(s)&\ll_{\pi,\epsilon}L_{\chi}(1)(q'q)^{\epsilon}(1-\beta)^{-1}x^{1-\beta}.
	\end{aligned}
	\end{equation}
	If $x\geq 3$, then since $\lambda_{\Pi^{\star}}(1)=1$, we use \eqref{eqn:half_bound} to deduce that
	\begin{equation}
	\label{eqn:aux_calcul}
	\begin{aligned}
	\frac{1}{e}&\leq \sum_{m=1}^{\infty}\frac{\lambda_{\Pi^{\star}}(m)}{m^{\beta}}e^{-\frac{m}{x}}\\
	&=\frac{1}{2\pi i}\int_{3-i\infty}^{3+i\infty}L(s+\beta,\Pi^{\star})x^s\Gamma(s)ds\\
	&=\mathop{\mathrm{Res}}_{s=1-\beta}L(s+\beta,\Pi^{\star})x^s\Gamma(s)+L(\beta,\Pi^{\star})+\frac{1}{2\pi i}\int_{\frac{1}{2}-\beta-i\infty}^{\frac{1}{2}-\beta+i\infty}L(s+\beta,\Pi^{\star})x^s\Gamma(s)ds\\
	&\ll_{\pi,\epsilon}L_{\chi}(1)(q'q)^{\epsilon}(1-\beta)^{-1}x^{1-\beta}+L(\beta,\Pi^{\star})+(q'q)^{\frac{n^2}{2}+\epsilon}x^{\frac{1}{2}-\beta}.
	\end{aligned}
	\end{equation}
	
\begin{proposition}
\label{prop:Siegel_pre}
		Recall the notation and hypotheses of \cref{lem:ZFR_twist}.  If $\beta_1$ exists for a primitive character $\chi\pmod{q}$ such that $\gcd(q,q_{\pi})=1$, then for all $\epsilon>0$, there exists an (ineffective) constant $c_{\pi}'(\epsilon)>0$ such that $L_{\chi}(1)\geq c_{\pi}'(\epsilon)q^{-\epsilon}$.
\end{proposition}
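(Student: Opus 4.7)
The plan is to follow the Goldfeld--Hoffstein--Lockhart dichotomy, using the auxiliary $L$-function $L(s,\Pi^\star)$ and the inequality \eqref{eqn:aux_calcul} already assembled just above the statement. Fix $\epsilon > 0$ and set $\eta \coloneqq \epsilon/(4n^2)$. We may assume $q$ exceeds a threshold $q_0 = q_0(\pi,\epsilon)$, since for smaller $q$ the finitely many admissible characters $\chi$ satisfy $L_\chi(1) > 0$ by \cref{lem:ZFR_twist} and the bound is immediate (with an ineffective constant).

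The dichotomy is: either (A) there exists a primitive quadratic Dirichlet character $\chi_\eta \pmod{q_\eta}$ with $q_\eta \geq 2$, $\gcd(q_\eta,q_\pi) = 1$, and a real zero $\beta_\eta \in [1-\eta,1)$ of $L_{\chi_\eta}$; or (B) no such character exists. In Case (B), one may enlarge $q_0$ so that $\Cr{ZFR_BV_1}/\log(q_0 C(\pi)) < \eta$, which forces the real portion of the exceptional region from \cref{lem:ZFR_twist} to lie entirely inside $[1-\eta,1)$. This rules out the existence of $\beta_1$ for $\chi$, contradicting the hypothesis of the proposition; so only Case (A) can occur.

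In Case (A), if $\chi = \chi_\eta$ then $L_\chi(1) = L_{\chi_\eta}(1)$ is a fixed positive constant depending only on $\pi$ and $\epsilon$, and the bound is trivial. Otherwise, take $\chi' = \chi_\eta$ in the construction \eqref{eqn:Pi_def_Siegel}. Since $L_{\chi_\eta}(\beta_\eta) = 0$, the factorization of $L(s,\Pi^\star)$ gives $L(\beta_\eta,\Pi^\star) = 0$, and evaluating \eqref{eqn:aux_calcul} at $\beta = \beta_\eta$ yields, for any $\epsilon_1 > 0$ and $x \geq 3$,
\[
1 \ll_{\pi,\epsilon_1} L_\chi(1)\,(q_\eta q)^{\epsilon_1}(1-\beta_\eta)^{-1} x^{1-\beta_\eta} + (q_\eta q)^{n^2/2+\epsilon_1} x^{1/2-\beta_\eta}.
\]
Taking $x = q^{2n^2+1}$ and using $\beta_\eta \geq 1-\eta > 3/4$, the second term is $\ll_{\chi_\eta,\epsilon_1} q^{-\delta}$ for some $\delta = \delta(\pi,\epsilon) > 0$, hence negligible for large $q$. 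The first term contributes a factor $x^{1-\beta_\eta} \leq q^{(2n^2+1)\eta} \leq q^{3\epsilon/4}$, so choosing $\epsilon_1 < \epsilon/4$ gives $L_\chi(1) \gg_{\pi,\epsilon,\chi_\eta} q^{-\epsilon}$, with the implied constant absorbing the fixed factor $(1-\beta_\eta)^{-1}$.

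The central obstacle is precisely what renders the result ineffective: we cannot determine whether a Siegel-type exceptional character $\chi_\eta$ exists, nor identify it if it does, so the constant $c'_\pi(\epsilon)$ inherits an ineffective dependence on $\chi_\eta$ via the factor $1 - \beta_\eta$. This parallels the classical Siegel theorem, and removing this dependence would require a substantial new idea lying beyond current methods.
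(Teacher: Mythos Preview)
Your proof is correct and follows the same Goldfeld-type dichotomy as the paper, using the auxiliary product $L(s,\Pi^\star)$ and the inequality \eqref{eqn:aux_calcul}. The one notable difference is your handling of Case~(B): you observe that if no quadratic $L_{\chi_\eta}$ vanishes in $[1-\eta,1)$, then for $q$ large the hypothesis ``$\beta_1$ exists'' is violated, so Case~(B) is vacuous; the paper instead treats this case directly via a Hoffstein--Lockhart residue lower bound for $L(s,\Pi^\star)$, which yields a bound on $L_\chi(1)$ without invoking the existence of $\beta_1$. Your shortcut is valid because the proposition explicitly assumes $\beta_1$ exists, and it avoids the extra input from \cite{HL}; the paper's route proves a marginally stronger statement (the lower bound on $L_\chi(1)$ holds even without that hypothesis). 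In Case~(A) the two arguments are essentially identical, except that you absorb the fixed modulus $q_\eta$ into the implied constant rather than arranging $q\ge q'$ via minimality, and you choose $x=q^{2n^2+1}$ instead of the paper's $x=q^{2n^2}/L_\chi(1)^2$; both choices work. One small point of care: the derivation of \eqref{eqn:aux_calcul} is stated for $\beta\in(1-\epsilon,1)$ with the auxiliary parameter $\epsilon$, so you should record that your choices satisfy $\eta<\epsilon_1$ (which they do, since $\eta=\epsilon/(4n^2)\le\epsilon/16<\epsilon_1$ for $n\ge 2$ and, say, $\epsilon_1=\epsilon/8$).
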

\begin{proof}
It suffices to take $0<\epsilon<1$.  Let $\chi\pmod{q}$ and $\chi'\pmod{q'}$ be primitive quadratic Dirichlet characters with, let $\psi$ be the primitive character that induces $\chi'\chi$, and recall the definition of $L(s,\Pi^{\star})$ from \eqref{eqn:Pi_def_Siegel}.  Our proof consists of two cases.

First, suppose that there exists no primitive quadratic Dirichlet character $\omega$ such that $L_{\omega}(s)$ does not vanish for $s\in(1-\frac{\epsilon}{2},1)$.  It then follows that there exists a constant $\Cl[abcon]{siegel_first}=\Cr{siegel_first}(\pi)>0$ such that $L_{1}(s)L_{\chi}(s)\neq 0$ in the interval $s\in(1-\Cr{siegel_first}/\log q,1)$.  Since the Dirichlet coefficients of $L_{1}(s)L_{\chi}(s)$ are nonnegative (using \cref{lem:quad_nonneg}) and the residue of $L_{1}(s)L_{\chi}(s)$ at $s=1$ is $L_{\chi}(1)\mathrm{Res}_{s=1}L_{1}(s)$, it follows from \cite[Proposition 1.1]{HL} and \eqref{eqn:preconvex} that $L_{\chi}(1)\mathop{\mathrm{Res}}_{s=1}L_{1}(s)\gg_{\pi}(\log q)^{-1}$.  Since each term the left-hand side is positive (using \cref{lem:quad_nonneg}), the desired result follows.

Second, suppose that there exists $\chi'\pmod{q'}$ and $\beta\in(1-\frac{\epsilon}{2},1)$ such that $L_{\chi'}(\beta)=0$.  We may assume that $q'$ is minimal, subject to this condition.  Now, let $\chi$ be arbitrary.  If $q<q'$, then the minimality of $q'$ ensures that $L_{\chi}(s)\neq 0$ for $s\in(1-\frac{\epsilon}{2},1)$, and the preceding case implies the desired result.  Suppose now that $q\geq q'$.  If $L_{\chi}(s)$ has no real zero within a distance of $\Cr{ZFR_BV_1}/\log(3q'q C(\pi))$ of $s=1$, then since $q\geq q'$, $L_{\chi}(s)$ has no real zero within a distance of $\frac{1}{2}\Cr{ZFR_BV_1}/\log(3qC(\pi))$ of $s=1$.  Again, the desired result follows by the preceding case.  Finally, suppose that $L_{\chi}(s)$ has a real zero within a distance of $\Cr{ZFR_BV_1}/\log(3q'q C(\pi))$ of $s=1$.  At this stage, we assume that $\chi\neq\chi'$.

It follows from analysis nearly identical to the second case in \cref{lem:ZFR_twist} (with $L(s,\Pi^{\star})$ replacing $D(s)$) that $L(s,\Pi^{\star})$ has at most one real zero within distance $\Cr{ZFR_BV_1}/\log(3q'q C(\pi))$ from 1. Since we have supposed that $L_{\chi}(s)$ has a real zero within distance $\Cr{ZFR_BV_1}/\log(3q'q C(\pi))$ of $s=1$, the above discussion indicates that this is the sole real zero for $L(s,\Pi^{\star})$ within a distance of $\Cr{ZFR_BV_1}/\log(3q'q C(\pi))$ of $s=1$.  It follows that the zero $\beta$ of $L_{\chi'}(s)$ must satisfy
\[
\beta\leq 1-\frac{\Cr{ZFR_BV_1}}{\log(3q'qC(\pi))},\qquad \beta\in\Big(1-\frac{\epsilon}{2},1\Big).
\]
Since $L_{\chi'}(\beta)=0$, it follows that $L(\beta,\Pi^{\star})=0$.  Using \eqref{eqn:aux_calcul}, the above bounds on $\beta$, and the fact that $q\geq q'$, we find that
\[
1\ll_{\pi,\epsilon}L_{\chi}(1)q^{2\epsilon}x^{1-\beta}+q^{n^2+2\epsilon}x^{\frac{1}{2}-\beta}\ll_{\pi,\chi',\epsilon}L_{\chi}(1)q^{2\epsilon}x^{\frac{\epsilon}{2}}+q^{n^2+2\epsilon}x^{\frac{\epsilon}{2}-\frac{1}{2}}
\]
Choosing $x=q^{2n^2}/L_{\chi}(1)^2$ (which is at least 3 by \eqref{eqn:BH}  and \eqref{eqn:preconvex}) and solving for $L_{\chi}(1)$, we find that for all $\chi\neq\chi'$ and all $0<\epsilon<1$, there exists a constant $d_{\pi}(\epsilon)>0$ such that $L_{\chi}(1)\geq d_{\pi}(\epsilon)q^{-\epsilon(n^2+2)/(1-\epsilon)}$.  Upon rescaling $\epsilon$ to $\epsilon/(n^2+2+\epsilon)$, we have $L_{\chi}(1)\geq d_{\pi}(\epsilon/(n^2+2+\epsilon))q^{-\epsilon}$.  As long as $\chi\neq \chi'$, the constant $d_{\pi}(\epsilon/(n^2+2+\epsilon))$ is effective.  Once we decrease $d_{\pi}(\epsilon/(n^2+2+\epsilon))$ suitably to account for the case where $\chi=\chi'$, the claimed result holds for arbitrary $\chi$.
\end{proof}

	\begin{corollary}
	\label{cor:Siegel}
		Recall the notation and hypotheses of \cref{lem:ZFR_twist}.  If $\beta_1$ exists, then for all $\epsilon>0$, there exists an (ineffective) constant $c_{\pi}(\epsilon)>0$ such that $\beta_1\leq 1-c_{\pi}(\epsilon)q^{-\epsilon}$.
	\end{corollary}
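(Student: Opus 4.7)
The plan is to convert the lower bound $L_{\chi}(1) \geq c_{\pi}'(\epsilon) q^{-\epsilon}$ just established in Proposition \ref{prop:Siegel_pre} into the desired upper bound for $\beta_1$ by a standard mean value argument. Since $L_{\chi}(\beta_1)=0$, the mean value theorem furnishes some $\xi\in(\beta_1,1)$ with
\[
L_{\chi}(1) = L_{\chi}(1) - L_{\chi}(\beta_1) = (1-\beta_1)\,L_{\chi}'(\xi),
\]
whence
\[
1-\beta_1 \;\geq\; \frac{c_{\pi}'(\epsilon)\,q^{-\epsilon}}{|L_{\chi}'(\xi)|}.
\]
The entire task therefore reduces to showing that $|L_{\chi}'(\xi)| \ll_{\pi,\epsilon} q^{\epsilon}$.

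To establish that derivative bound, I would apply Cauchy's integral formula on a small circle of radius $r \asymp 1/\log(q\,C(\pi))$ centered at $\xi$. Since $\chi$ is nontrivial and $\gcd(q,q_{\pi})=1$, the function $L_{\chi}(s)$ is entire, so the only concern is to control $|L_{\chi}(s)|$ uniformly on that circle. By Lemma \ref{lem:ZFR_twist}, $\beta_1$ is already forced quite close to $1$, so with this choice of $r$ the circle lies in the half-plane $\re(s)\geq\frac{1}{2}$ and has $|\im(s)|\leq 1$. On this region the convexity bound \eqref{eqn:preconvex}, combined with the conductor estimate \eqref{eqn:BH} (which implies $C(\pi\times(\tilde{\pi}\otimes\chi))\ll_{\pi} q^{n}$), yields $|L_{\chi}(s)| \ll_{\pi,\epsilon} q^{\epsilon}$. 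Cauchy's formula then gives $|L_{\chi}'(\xi)| \leq r^{-1}\sup_{|s-\xi|=r}|L_{\chi}(s)| \ll_{\pi,\epsilon} q^{\epsilon}\log(q\,C(\pi))$, which after absorbing the logarithmic factor into the exponent is $\ll_{\pi,\epsilon} q^{2\epsilon}$.

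Substituting this into the mean value identity gives $1-\beta_1 \geq c_{\pi}'(\epsilon)\,q^{-3\epsilon}/\Cr{Li}'$ for some constant $\Cr{Li}'$ depending on $\pi$ and $\epsilon$; rescaling $\epsilon \mapsto \epsilon/3$ then produces the claimed bound $\beta_1 \leq 1 - c_{\pi}(\epsilon) q^{-\epsilon}$. There is no real obstacle here: the only step that is not entirely formal is verifying that the Cauchy circle sits in a region where the convexity bound applies, but this is automatic once one invokes the zero-free region of Lemma \ref{lem:ZFR_twist}. The ineffectivity of $c_{\pi}(\epsilon)$ is inherited directly from the ineffectivity of $c_{\pi}'(\epsilon)$ in Proposition \ref{prop:Siegel_pre}, which in turn stems from the minimality-of-$q'$ trichotomy in its proof.
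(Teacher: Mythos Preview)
Your proof is correct and follows essentially the same approach as the paper: both use the mean value theorem together with Proposition~\ref{prop:Siegel_pre} to reduce to bounding $|L_{\chi}'(\xi)|$ near $s=1$, and both accomplish this via Cauchy's formula on a circle of radius $\asymp 1/\log q$ combined with the convexity bound \eqref{eqn:preconvex}. One small slip: the conductor bound from \eqref{eqn:BH} actually gives $C(\pi\times(\tilde{\pi}\otimes\chi))\ll_{\pi} q^{n^2}$ rather than $q^{n}$, but since any polynomial bound suffices here this does not affect your argument.
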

	\begin{proof}
	If $\beta_1$ exists, then there exists $\sigma\in[\beta_1,1]$ such that $L_{\chi}'(\sigma)(1-\beta_1)=L_{\chi}(1)\geq c_{\pi}'(\frac{\epsilon}{2})q^{-\frac{\epsilon}{2}}$ by \cref{prop:Siegel_pre} and the mean value theorem.  The result follows once we establish the bound $L_{\chi}'(\sigma)\ll_{\pi,\epsilon} q^{\frac{\epsilon}{2}}$ for $\sigma\in[1-b_n/\log(3qC(\pi)),1]$, where $b_n>0$ is a suitable constant depending at most on $n$.  To prove this, we observe via Cauchy's integral formula that
\[
L_{\chi}'(1)=\frac{1}{2\pi i}\int_{|z-1|=\frac{1}{\log q}}\frac{L_{\chi}(z)}{(z-1)^2}dz\ll (\log q) \max_{|\xi-1|\leq\frac{1}{\log q}}|L_{\chi}(\xi)|,
\]	
in which case the desired bound follows from \eqref{eqn:preconvex}.
\end{proof}

We will show that among the primitive characters $\chi\pmod{q}$ with $q\leq Q$, we encounter very few with the property that $L_{\chi}(s)$ has an exceptional zero.

\begin{lemma}
	\label{lem:page}
	Let $Q\geq 3$.  There exists an effectively computable constant $\Cl[abcon]{ZFR_BV_3}=\Cr{ZFR_BV_3}(n)>0$ such that there is at most one real nontrivial primitive Dirichlet character $\chi_1\pmod{q_1}$ with $q_1\leq Q$ such that $L_{\chi_1}(s)$ has a real zero $\beta_1$ satisfying $\beta_1>1-\Cr{ZFR_BV_3}/\log(C(\pi)Q)$.
\end{lemma}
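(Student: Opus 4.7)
The plan is to implement a standard Page-type argument, building on the positivity input already developed in the proof of \cref{prop:Siegel_pre}. Suppose toward contradiction that there exist two distinct real-valued primitive Dirichlet characters $\chi_1\pmod{q_1}$ and $\chi_2\pmod{q_2}$ with $q_1,q_2\leq Q$, $\gcd(q_i,q_\pi)=1$ for $i\in\{1,2\}$, such that each $L_{\chi_i}(s)$ has a real zero $\beta_i$ in $[1-\Cr{ZFR_BV_3}/\log(C(\pi)Q),1)$, for a constant $\Cr{ZFR_BV_3}$ to be chosen.

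Let $\psi$ denote the primitive character inducing the (necessarily nontrivial) product $\chi_1\chi_2$; its conductor divides $q_1q_2$ and is therefore coprime to $q_\pi$. Mirroring \eqref{eqn:Pi_def_Siegel}, consider
\[
\Pi^{\dagger}\coloneqq \pi\times\tilde{\pi}\boxplus\pi\times(\tilde{\pi}\otimes\chi_1)\boxplus\pi\times(\tilde{\pi}\otimes\chi_2)\boxplus\pi\times(\tilde{\pi}\otimes\psi),
\]
so that $L(s,\Pi^{\dagger})=L(s,\pi\times\tilde{\pi})L_{\chi_1}(s)L_{\chi_2}(s)L_{\psi}(s)$. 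The conductor comparisons of \cite{BH} force $\tilde{\pi}\otimes\chi_1$, $\tilde{\pi}\otimes\chi_2$, and $\tilde{\pi}\otimes\psi$ all to differ from $\tilde{\pi}$, so $L(s,\Pi^{\dagger})$ is holomorphic on $\mathbb{C}$ except for a simple pole at $s=1$. Exactly as in the discussion preceding \eqref{eqn:aux_calcul}, the $p^k$-th Dirichlet coefficient of $-\tfrac{L'}{L}(s,\Pi^{\dagger})$ is
\[
k^{-1}a_{\pi\times\tilde{\pi}}(p^k)\bigl(1+\chi_1(p^k)+\chi_2(p^k)+\psi(p^k)\bigr)\log p,
\]
which is nonnegative, since the character sum is a Dirichlet coefficient of the Dedekind zeta function of the biquadratic field cut out by $\chi_1$ and $\chi_2$, and $a_{\pi\times\tilde{\pi}}(p^k)\geq 0$ by \cite[Proposition A.1]{ST}.

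We now invoke the second assertion of \cref{lem:GHL} applied to $\Pi^{\dagger}$ with $r'=1$. Using $\log C(\Pi^{\dagger})\ll\log(C(\pi)q_1q_2)\ll\log(C(\pi)Q)$ via \eqref{eqn:BH}, we may choose $\Cr{ZFR_BV_3}=\Cr{ZFR_BV_3}(n)$ small enough so that $L(s,\Pi^{\dagger})$ has at most one real zero (counted with multiplicity) in $[1-\Cr{ZFR_BV_3}/\log(C(\pi)Q),1)$. On the other hand, both $\beta_1$ and $\beta_2$ lie in this interval and are zeros of $L(s,\Pi^{\dagger})$. If $\beta_1\neq\beta_2$, these provide two distinct real zeros; if $\beta_1=\beta_2$, then $L_{\chi_1}$ and $L_{\chi_2}$ both vanish at this common point, producing a zero of multiplicity at least two. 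Either outcome violates \cref{lem:GHL}, furnishing the desired contradiction.

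The principal subtlety is confirming that \cref{lem:GHL} tracks real zeros with multiplicity, so that a potential coincidence $\beta_1=\beta_2$ really does yield the contradiction via a double zero. This is implicit in the Landau--Ikehara-type proof underlying \cite[Proposition 5.9]{IK}: a zero of order at least two can be split, under a small perturbation of the test function used to analyze the logarithmic derivative, into two nearby simple zeros, so the multiplicity bound follows from the simple-zero bound. The only edge case to verify separately is when one of $\chi_1,\chi_2$ is the trivial character, in which case $\psi$ coincides with the nontrivial one and $L(s,\Pi^{\dagger})$ factors with doubled exponents; the pole order becomes $r'=2$ but the multiplicities of $\beta_1$ and $\beta_2$ in $L(s,\Pi^{\dagger})$ also double, so the contradiction persists.
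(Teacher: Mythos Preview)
Your argument is correct and takes a somewhat different route from the paper. The paper forms the genuinely isobaric representation $\Pi^*=\pi\boxplus\pi\otimes\chi\boxplus\pi\otimes\chi'$ of $\GL_{3n}(\A_\Q)$ and applies the \emph{first} part of \cref{lem:GHL} to $L(s,\Pi^*\times\tilde{\Pi}^*)=L(s,\pi\times\tilde{\pi})^3 L_\chi(s)^2 L_{\chi'}(s)^2 L_\psi(s)^2$, which has a triple pole at $s=1$; two exceptional zeros then force at least four real zeros near $s=1$, contradicting the bound of three. You instead work directly with $L(s,\Pi^\dagger)=L(s,\pi\times\tilde{\pi})L_{\chi_1}(s)L_{\chi_2}(s)L_\psi(s)$ and invoke the \emph{second} part of \cref{lem:GHL}, using the nonnegativity of the coefficients of $-L'/L$ already established before \eqref{eqn:aux_calcul}. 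Your approach is more economical, avoiding the Rankin--Selberg detour, and is in fact the same device the paper uses in the proof of \cref{prop:Siegel_pre} (where $\Pi^\star$ is your $\Pi^\dagger$). The paper's version has the mild formal advantage that $\Pi^*$ is literally an isobaric sum of cuspidal representations, so the first clause of \cref{lem:GHL} applies without comment; but since the paper itself applies \cref{lem:GHL} to $\Pi^\star$ elsewhere, this is not a real objection. One small slip: the $p^k$-th coefficient of $-\tfrac{L'}{L}(s,\Pi^\dagger)$ is $a_{\pi\times\tilde{\pi}}(p^k)\bigl(1+\chi_1(p^k)+\chi_2(p^k)+\psi(p^k)\bigr)\log p$, without the factor $k^{-1}$ (that factor belongs to $\log L$, not $-L'/L$); this does not affect nonnegativity.
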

\begin{proof}
	Suppose to the contrary that $\chi\pmod{q}$ and $\chi'\pmod{q'}$ are two distinct such characters with $q,q'\leq Q$.  Let $\Pi = \pi\boxplus\pi\otimes\chi\boxplus\pi\otimes\chi'$, let $\psi$ be the primitive character that induces $\chi'\chi$, and let
	\[
	F(s) = L(s,\Pi\times\tilde{\Pi}) = L_{1}(s)^3L_{\chi}(s)^2 L_{\chi'}(s)^2 L_{\psi}(s)^2.
	\]
	By \cite[Lemma a]{Hoffstein}, the Dirichlet coefficients of $-\frac{F'}{F}(s)$ are nonnegative.  By \cref{lem:GHL}, there exists a constant $\Cl[abcon]{stirling}=\Cr{stirling}(n)\geq 1$ such that if $\omega$ runs through the nontrivial zeros of $F(s)$ and $1<\sigma<2$, then
	\begin{equation}
	\label{eqn:last_eqn}
	\sum_{\omega}\re\Big(\frac{1}{\sigma-\omega}\Big)<\frac{3}{\sigma-1}+\Cr{stirling}\log(C(\pi)Q).
	\end{equation}
	If $\sigma=1+\frac{1}{2\Cr{stirling}\log(C(\pi)Q)}$ and $M$ is the number (necessarily an integer) of real zeros (counting multiplicity) of $F(s)$ that are at least $1-\frac{1}{14\Cr{stirling}\log(C(\pi)Q)}$, then it follows from \eqref{eqn:last_eqn} that
	\[
	\frac{M}{1+\frac{1}{2\Cr{stirling}\log(C(\pi)Q)}-(1-\frac{1}{14\Cr{stirling}\log(C(\pi)Q)})}<\frac{3}{(1+\frac{1}{2\Cr{stirling}\log(C(\pi)Q)})-1}+\Cr{stirling}\log(C(\pi)Q).
	\]
	This implies that $M\leq 3$.  But if $L_{\chi}(s)$ and $L_{\chi'}(s)$ both have real zeros that are larger than $1-\frac{1}{14\Cr{stirling}\log(C(\pi)Q)}$, then $F(s)$ has 4 such zeros, a contradiction.  The lemma now follows.
\end{proof}

\begin{proof}[Proof of \cref{thm:ZFR}]
	This follows from \cref{cor:Siegel} and \cref{lem:ZFR_HT,lem:ZFR_twist,lem:page}.
\end{proof}

\bibliographystyle{abbrv}
\bibliography{Humphries_Thorner_GLmxGLn_zero_density}

\end{document}